\numberwithin{equation}{section}
\newtheorem{theorem}{{\sc Theorem}}[section]
\newtheorem{cor}[theorem]{{\sc Corollary}}
\newtheorem{lemma}[theorem]{{\sc Lemma}}
\newtheorem{prop}[theorem]{{\sc Proposition}}
\theoremstyle{remark}
\newtheorem{remark}[theorem]{{\sc Remark}}
\theoremstyle{definition}
\newtheorem{de}[theorem]{\sc Definition}
\newcommand{\R}{\mathbb{R} }
\newcommand{\mfk}{\mathfrak{f}}
\newcommand{\mgk}{\mathfrak{g}}
\newcommand{\mhk}{\mathfrak{h}}
\newcommand{\N}{\mathbb{N} }
\newcommand{\F}{\mathcal{F}}
\newcommand{\Z}{\mathbb{Z}}
\newcommand{\calZ}{\mathcal{Z}}
\newcommand{\scrZ}{\mathscr{Z}}
\newcommand{\Prob}{\mathbb{P}}
\newcommand{\E}{\mathbb{E}}
\newcommand{\dk}{d_{\rm Kol}}
\newcommand{\Gammabar}{\bar{\Gamma}}
\providecommand{\abs}[1]{\lvert #1\rvert}
\providecommand{\babs}[1]{\bigl\lvert #1\bigr\rvert}
\providecommand{\Babs}[1]{\Bigl\lvert #1\Bigr\rvert}
\providecommand{\fnorm}[1]{\lVert #1\rVert_\infty}
\providecommand{\Enorm}[1]{\lVert #1\rVert_2}
\providecommand{\proj}[2]{\pro\bigl\{#1\,\bigl|\,C_{#2}\bigr\}}
\DeclareMathOperator{\Var}{Var}
\DeclareMathOperator{\dom}{dom}
\DeclareMathOperator{\Cov}{Cov}
\DeclareMathOperator{\pro}{proj}
\renewcommand{\phi}{\varphi}
\renewcommand{\epsilon}{\varepsilon}
\renewcommand{\rho}{\varrho}
\begin{document}

\title[The fourth moment theorem ]{The fourth moment theorem \\ on the Poisson space  }

\author{  Christian D\"obler \and Giovanni Peccati}
\thanks{\noindent Universit\'{e} du Luxembourg, Unit\'{e} de Recherche en Math\'{e}matiques \\
E-mails: christian.doebler@uni.lu, giovanni.peccati@gmail.com\\
{\it Keywords}: Poisson functionals; multiple Wiener-It\^{o} integrals; fourth moment theorem; carr\'{e}-du-champ operator; Berry-Esseen bounds; Gaussian approximation; Gamma approximation; Malliavin calculus; Stein's method\\
{\it AMS 2000 Classification}: 60F05; 60H07; 60H05}
\begin{abstract} We prove a fourth moment bound { without remainder} for the normal approximation of random variables belonging to the Wiener chaos of a general Poisson random measure. Such a result -- that has been elusive for several years -- shows that the so-called `fourth moment phenomenon', first discovered by Nualart and Peccati (2005) in the context of Gaussian fields, also systematically emerges in a Poisson framework. Our main findings are based on Stein's method, Malliavin calculus and Mecke-type formulae, as well as on a methodological breakthrough, consisting in the use of carr\'e-du-champ operators on the Poisson space for controlling residual terms associated with add-one cost operators. Our approach can be regarded as a successful application of Markov generator techniques to probabilistic approximations in a non-diffusive framework: as such, it represents a significant extension of the seminal contributions by Ledoux (2012) and Azmoodeh, Campese and Poly (2014). To demonstrate the flexibility of our results, we also provide some novel bounds for the Gamma approximation of non-linear functionals of a Poisson measure.
\end{abstract}

\maketitle

\section{Introduction}\label{intro}

\subsection{Overview} 

The aim of this paper is to prove a {\it fourth moment bound} { without remainder} for the normal approximation of random variables belonging to the Wiener chaos of a general Poisson measure. Differently from previous fourth moment limit theorems on the Poisson space proved in the literature, our main findings do not require that the involved random variables have the form of multiple integrals with a kernel of constant sign (see \cite{LRP1, Sch16, ET14}), nor that they are finite homogeneous sums (see \cite{PZ2}) or that they belong to Wiener chaoses of lower orders (see \cite{PT08, BPsv}). As discussed below, the methodological breakthrough yielding such an achievement, consists in the use of {\it carr\'e-du-champ operators} on the Poisson space, that we shall systematically exploit in connection with Mecke-type formulae and Stein's method  (see \cite{CGS, NouPecbook}). We will see that using carr\'e-du-champ operators instead of norms of Malliavin derivatives (as done in the already quoted references   \cite{LRP1, Sch16, ET14, PZ2, PT08, BPsv}) will allow us to bypass at once almost all combinatorial difficulties -- in particular connected to multiplication formulae on configuration spaces -- that have systematically marred previous attempts.

\smallskip

We stress that the idea of using carr\'e-du-champ operators, in order to deduce quantitative limit theorems by means of Stein's method, originates in the groundbreaking works \cite{Led12, ACP, AMMP}, where the authors apply the powerful techniques of {\it Gamma calculus} in the framework of eigenspaces of diffusive Markov generators (see \cite{BGL14} for definitions, { as well as \cite{ChPo} for an introduction to this approach}). As demonstrated in Section \ref{s:mwi}, our results show that such an approach can be fruitfully applied and extended, in order to control residual terms arising from the application of Stein's method in a non-diffusive context.

\subsection{Further historical details}
The so-called {\it fourth moment phenomenon} was first discovered in \cite{NP05}, where the authors  proved that a sequence of normalized random variables, belonging to a fixed Wiener chaos of a Gaussian field, converge in distribution to a Gaussian random variable if and only if their fourth cumulant converges to zero. Such a result constitutes a dramatic simplification of the method of moments and cumulants (see e.g. \cite[p. 202]{NouPecbook}), and represents a rough infinite-dimensional counterpart of classical results by de Jong -- see \cite{deJo87, deJo89, deJo90}, as well as \cite{DP16, DP16b} for recent advances. A particularly fruitful line of research was initiated in \cite{NP-ptrf}, where it is proved that the results of \cite{NP05} can be recovered from very general estimates, obtained by combining the Malliavin calculus of variations with Stein's method for normal approximation. {{} Precisely, one remarkable achievement of this approach is the bound 
\begin{equation}\label{gsbound}
d_{\rm Kol}(F,N)\leq \sqrt{\frac{q-1}{3q}\bigl(\E[F^4]-3\bigr)}\,, 
\end{equation}
where $d_{\rm Kol}$ stands for the Kolmogorov distance between the laws of two random variables, $F$ is a normalized \textit{multiple Wiener-It\^{o} integral} of order $q\geq1$ on a Gaussian space and $N$ denotes a standard normal random variable (see e.g. Theorem 5.2.6 in \cite{NouPecbook}, where analogous bounds for other metrics are also stated).}
Such a discovery has been the seed of a fruitful {{} stream} of research, now consisting of several hundred papers, where the results of \cite{NP05, NP-ptrf} have been extended and applied to a variety of frameworks, ranging from free probability to stochastic geometry, compressed sensing and time-series analysis  --- see the webpage {\tt https://sites.google.com/site/malliavinstein/home} for a constantly updated list, as well as the monograph \cite{NouPecbook} and the reference \cite{LNP} for recent developments related to functional inequalities.  

\smallskip

The line of research pursued in the present work stems from the two papers \cite{PSTU, PZ1}, where the authors adapted the techniques introduced in \cite{NP-ptrf} to the framework of non-linear Poisson functionals, in particular by combining Stein's method with a discrete version of Malliavin calculus on configuration spaces. {{} As anticipated, the principal aim of this work is to positively answer the following question: 
\begin{center}
\textit{Can one prove a bound comparable to \eqref{gsbound} on the Poisson space?} 
\end{center}
Such a question has stayed open since the publication of \cite{PSTU} and, so far,  answers have only been found in very special cases --- see Remark \ref{mtrem} below.}

\medskip

One should notice that the relevance of the techniques developed in \cite{PSTU, PZ1} has been greatly amplified by the pathbreaking reference \cite{RS}, where it is shown that one can use Malliavin-Stein techniques on the Poisson space in order to study the fluctuation of random objects arising in the context of random geometric structures on configuration spaces, like e.g. random graphs or random tessellations. Such a connection with stochastic geometry has generated a remarkable body of work, that has recently culminated in the publication of the monograph \cite{PecRei16}. The reader is referred to \cite{LPS, LSY16} for recent developments connected to Mehler formulae, stabilization and second order Poincar\'e inequalities, and to \cite{BacP} for some related concentration estimates in a geometric context.

 \subsection{Main results for normal approximations}  
\smallskip

We fix an arbitrary measurable space $(\mathcal{Z},\mathscr{Z})$ endowed with a $\sigma$-finite measure $\mu$. Furthermore, we let 
\[\mathscr{Z}_\mu:=\{B\in\mathscr{Z}\,:\,\mu(B)<\infty\}\]
and denote by 
\begin{equation*}
\eta=\{\eta(B)\,:\,B\in\mathscr{Z}\}
\end{equation*}
a \textit{Poisson measure} on $(\mathcal{Z},\mathscr{Z})$ with \textit{control} $\mu$, defined on a suitable probability space $(\Omega,\F,\Prob)$. We recall that the distribution of $\eta$ is completely determined by the following two facts: (i) 
{ for each finite sequence $B_1,\dotsc,B_m\in\mathscr{Z}$ of pairwise disjoint sets, the random variables $\eta(B_1),\dotsc,\eta(B_m)$ are independent,}
and (ii) that for every $B\in\mathscr{Z}$, the random variable $\eta(B)$ has the Poisson distribution with mean $\mu(B)$.
{ Here, we have extended the family of Poisson distributions to the parameter region $[0,+\infty]$ in the usual way.} 
For $B\in\mathscr{Z}_\mu$, we also write 
$\hat{\eta}(B):=\eta(B)-\mu(B)$ and denote by 
\[\hat{\eta}=\{\hat{\eta}(B)\,:\,B\in\mathscr{Z}_\mu\}\]
the \textit{compensated Poisson measure} associated with $\eta$. As discussed in Section \ref{ss:setup}, we require throughout the paper that $\eta$ is {\it proper}, that is, that $\eta$ can be a.s. represented as a (possibly infinite)
{ random} sum of Dirac masses. Without loss of generality, we may and will assume that $\F=\sigma(\eta)$. 
In order to state our main results, we introduce the following fundamental objects from stochastic analysis on the Poisson space. For precise definitions and further explanation we refer to \cite{PecRei16}, in particular to its first chapter \cite{Lastsv}, as well as to \cite{LPbook} and Section \ref{framework}. For a nonnegative integer $q$ and a square-integrable \textit{kernel function} $f\in L^2(\mu^q)$ we denote by $I_q(f)$ the $q$-th order \textit{multiple Wiener-It\^o integral} of $f$ with respect to $\widehat{\eta}$. If $L$ denotes the generator of the \textit{Ornstein-Uhlenbeck semigroup} with respect to $\eta$, then it is well-known that $-L$ has pure point spectrum given by the set of nonnegative integers and that, for 
$q\in\N_0=\{0,1,2,\dotsc\}$, $F$ is an eigenfunction of $-L$ with eigenvalue $q$, if and only if $F=I_q(f)$ for some $f\in L^2(\mu^q)$. The corresponding eigenspace $C_q$ is called the \textit{$q$-th Wiener chaos} associated with $\eta$.

\smallskip


Next, we introduce the probabilistic distances in which our bounds are expressed.
For $m\in\N$, denote by $\mathcal{H}_m$ the class of those $(m-1)$-times differentiable test functions 
$h$ on $\R$ such that $h^{(m-1)}$ is Lipschitz-continuous and we have   
\begin{equation*}
 \fnorm{h^{(l)}}\leq 1 \quad\text{for}\quad l=1,\dotsc,m\,.
\end{equation*}
{ Here and elsewhere, for an arbitrary function $g$ on $\R$, we use the notation 
\begin{equation*}
 \fnorm{g'}:=\sup_{x\not=y}\frac{\abs{g(y)-g(x)}}{\abs{y-x}}\in[0,+\infty]
\end{equation*}
for the \textit{minimum Lipschitz-constant} of $g$. This does not cause any confusion because this quantity coincides with the supremum norm of the derivative $g'$ of $g$ when $g$ happens to be differentiable.}
For real random variables $X$ and $Y$ such that $\E\abs{X},\E\abs{Y}<\infty$ we denote by 
\begin{equation*}
 d_m(X,Y):=d_m\bigl(\mathcal{L}(X),\mathcal{L}(Y)\bigr):=\sup_{h\in\mathcal{H}_m}\babs{\E[h(X)]-\E[h(Y)]}
\end{equation*}
the distance {between the distributions of $X$ and $Y$} induced by the class $\mathcal{H}_m$; observe that $d_1$ coincides with the classical 1-{\it Wasserstein distance}, see e.g. \cite[Appendix C]{NouPecbook} and the references therein. We will also study the {\it Kolmogorov distance} between the laws of $X$ and $Y$, given by
\begin{equation*}
 \dk(X,Y):=\sup_{x\in\R}\babs{\Prob[X\leq x ]-\Prob[Y\leq x]}.
\end{equation*}
{{} It is a well known fact (see e.g. \cite[Appendix C]{NouPecbook} and the references therein) that if $X$ is a generic random variable and $Y$ has a density bounded by $c\in(0,\infty)$, then
\begin{equation}\label{e:kw}
\dk(X,Y)\leq \sqrt{2c\; d_1(X,Y)}.
\end{equation}
}
\smallskip

The assumptions in our main results will be expressed in terms of the {\it add-one cost} operator $D^+$, that is defined as follows: if { $F = \mathfrak{f}(\eta)$} is a functional of $\eta$, then
$$
{ D^+_zF  := \mathfrak{f}(\eta+\delta_z)-\mathfrak{f}(\eta), \quad z\in \calZ,}
$$
in such a way that $D^+F$ can be regarded as a random function with domain equal to $\calZ$. See Section \ref{ss:setup} for a formal discussion of such an object.

{{} \begin{de}\label{d:H} { Let $F$ be an $\mathcal{F}$-measurable random variable (recall that $\mathcal{F}=\sigma(\eta)$).}
\begin{itemize}

\item[(i)]  We say that $F$ satisfies  {\bf Assumption A} if $F\in L^4(\Prob)$ and if the four random functions $D^+F$, $FD^+F$, $(D^+F)^4$ and $F^3 D^+F$ are all elements of the space 
$L^1 (\Omega\times Z, \F\otimes\mathscr{Z},\Prob\otimes \mu)=: L^1 (\Prob\otimes \mu)$.

\item[(ii)] We say that $F$ satisfies {\bf Assumption ${\bf A}^{\rm (loc)}$} if there exists a set $Z_0\in \mathscr{Z}$ with the following properties: (a) $\mu(\mathcal{Z} \backslash Z_0) = 0$, and (b) for every fixed $z\in Z_0$, the random variable $D^+_z F$ verifies {\bf Assumption A}.
\end{itemize}
\end{de}

{
\begin{remark} 
\begin{enumerate}[(a)]
\item Requiring that a given functionals satisfies {\bf Assumption A} or {\bf Assumption ${\bf A}^{\rm (loc)}$} is { necessary in this paper}, in order for us to apply Mecke{-type} identities (see \eqref{mecke}--\eqref{mecke2} below), as well as to exploit several almost sure representations of Malliavin and carr\'e-du-champ operators. { Both assumptions seem therefore to be (rather minimal) artefacts of the specific techniques adopted in the present paper, that are bound to be removed by further progress in the field. See \cite{DVZ} for several recent advances in this direction, partially building on the findings of the present paper, showing how {\bf Assumption A} can be avoided in the normal approximation of chaotic random variables in the Wasserstein distance, by using an adequate version of the exchangeable pairs approach of Stein's method. We also notice that a bound like \eqref{e:4mb4} on the Kolmogorov distance (which is of the same order as the corresponding bound on the Wasserstein distance), is for the time being outside the scope of exchangeable pairs.}
\item Using e.g. the multiplication formula stated in \cite[Proposition 5]{Lastsv}, one can easily prove that both {\bf Assumption ${\bf A}$} and {\bf Assumption ${\bf A}^{\rm (loc)}$} are verified, whenever $F$ has the form
$$
F = \sum_{q=0}^M I_q(f_q),
$$
where $M<\infty$ and each $f_q$ is bounded and such that its support is contained in a rectangle of the type $C\times \cdots \times C$, where $C\in \mathscr{Z}$ verifies $\mu(C)<\infty$. Such a class of random variables contains most $U$-statistics that are relevant for geometric applications (see the surveys \cite{LRRsv, STsv} and the references therein), as well as non-linear functionals of Volterra L\'evy processes \cite{PSTU, PZ1}, and the finite homogeneous sums in independent Poisson random variables considered in \cite{PZ2}. A similar remark applies to the assumptions appearing in the statement of our main abstract bounds in Proposition \ref{genbound} and Proposition \ref{p:genkol}. 
\end{enumerate}
\end{remark}
}
}

The next result is the main finding of the paper: it provides quantitative fourth moment estimates with completely explicit constants, both in the 1-Wasserstein and Kolmogorov distances, for random variables living in the Wiener chaos of a Poisson measure. Remarkably, the order of the bound (as a function of the fourth cumulant $\E[F^4]-3$) is the same for the two distances, thus significantly improving the estimate on $\dk$ that one could deduce from \eqref{e:kw}.

\begin{theorem}[Fourth moment bounds on the Poisson space]\label{4mt}
Fix an integer $q\geq 1$ and let $F = I_q(f)$ be a multiple Wiener-It\^{o} integral with respect to $\hat{\eta}$. Assume that $F$ verifies {\bf Assumption A} and that $\E[F^2]=1$; denote by $N\sim \mathscr{N}(0,1)$ a standard normal random variable. Then, 
\begin{align}
 d_1(F,N)&\leq \biggl(\sqrt{\frac{2}{\pi}}\frac{2q-1}{2q}+\frac{\sqrt{4q-3}}{\sqrt{q}}\biggr)\sqrt{\E\bigl[F^4\bigr]-3}\label{4mb}\\
&\leq\biggl(\sqrt{\frac{2}{\pi}}+2\biggr)\sqrt{\E\bigl[F^4\bigr]-3}\label{4mb2}\,
\end{align}
({in the above situation one automatically has that $\E[F^4]\geq3$}). Moreover, if, in addition to {\bf Assumption A}, $F$ also satisfies {\bf Assumption ${\bf A}^{\rm (loc)}$}, then
\begin{align}
\dk(F,N)&\leq \Bigl(11+2^{3/2}\bigl(\E[F^4]^{1/2}+\E[F^4]^{1/4}\bigr)\Bigr)\sqrt{\E\bigl[F^4\bigr]-3}\label{e:4mb3}
\quad\text{and}\\
\dk(F,N)&\leq  15.6 \sqrt{\E\bigl[F^4\bigr]-3} \label{e:4mb4}\,.
\end{align}
\end{theorem}

The following result is an immediate consequence of the bound \eqref{4mb2}.
\begin{cor}[Fourth moment theorem on the Poisson space]\label{cormt}
For each $n\in\N$ let $q_n\geq1$ be an integer and let $F_n=I_{q_n}(f_n)$ be a multiple Wiener-It\^{o} integral of some symmetric kernel $f_n\in L^2(\mu^{q_n})$ such that 
\[\lim_{n\to\infty}\E\bigl[F_n^2\bigr]=\lim_{n\to\infty}q_n!\Enorm{f_n}^2=1\quad\text{and}\quad
\lim_{n\to\infty}\E\bigl[F_n^4\bigr]=3\,.\]
Then, if each $F_n$ satisfies {\bf Assumption A}, the sequence $(F_n)_{n\in\N}$ converges in distribution to a standard normal random variable $N$ in the sense of the $1$-Wasserstein distance.
\end{cor}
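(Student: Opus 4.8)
The plan is to deduce the Corollary directly from the quantitative estimate \eqref{4mb2} in Theorem \ref{4mt}. The only gap between the hypotheses of the Corollary and those of the Theorem is the normalization: Theorem \ref{4mt} demands the exact identity $\E[F^2]=1$, whereas here we are only given the asymptotic relation $\E[F_n^2]=q_n!\Enorm{f_n}^2\to1$. Hence the first step is to pass to an exactly normalized sequence, apply \eqref{4mb2} to it, and then transfer the conclusion back to $F_n$ by controlling the vanishing discrepancy in the variances.

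Concretely, I would set $\sigma_n^2:=\E[F_n^2]=q_n!\Enorm{f_n}^2$, which by assumption satisfies $\sigma_n^2\to1$ (so in particular $\sigma_n>0$ for all large $n$, and $\sigma_n\to1$), and define the rescaled variable
\[
\tilde F_n:=\sigma_n^{-1}F_n=I_{q_n}\bigl(\sigma_n^{-1}f_n\bigr).
\]
Each $\tilde F_n$ is again a multiple Wiener-It\^o integral of order $q_n\geq1$ with $\E[\tilde F_n^2]=1$. Since multiplying a functional by a nonzero constant $c$ turns the four random functions $D^+F$, $FD^+F$, $(D^+F)^4$, $F^3D^+F$ into $cD^+F$, $c^2FD^+F$, $c^4(D^+F)^4$, $c^4F^3D^+F$, respectively, and leaves $L^4(\Prob)$-membership intact, $\tilde F_n$ inherits \textbf{Assumption A} from $F_n$. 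Moreover $\E[\tilde F_n^4]=\sigma_n^{-4}\E[F_n^4]\to3/1=3$. Applying the bound \eqref{4mb2} to each $\tilde F_n$ then yields
\[
d_1(\tilde F_n,N)\leq\Bigl(\sqrt{\tfrac{2}{\pi}}+2\Bigr)\sqrt{\E[\tilde F_n^4]-3}\xrightarrow[n\to\infty]{}0.
\]

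It remains to replace $\tilde F_n$ by $F_n$. Here I would use the triangle inequality together with the defining supremum of $d_1$ over $1$-Lipschitz test functions $h\in\mathcal{H}_1$: for any such $h$ one has $\babs{\E[h(F_n)]-\E[h(\tilde F_n)]}\leq\E\abs{F_n-\tilde F_n}=\abs{\sigma_n-1}\,\E\abs{\tilde F_n}$, whence, by Cauchy-Schwarz and $\E[\tilde F_n^2]=1$,
\[
d_1(F_n,\tilde F_n)\leq\abs{\sigma_n-1}\,\E\abs{\tilde F_n}\leq\abs{\sigma_n-1}\,\Enorm{\tilde F_n}=\abs{\sigma_n-1}\xrightarrow[n\to\infty]{}0.
\]
Combining the two displays via $d_1(F_n,N)\leq d_1(F_n,\tilde F_n)+d_1(\tilde F_n,N)$ gives $d_1(F_n,N)\to0$, which is exactly convergence of $(F_n)_{n\in\N}$ to $N$ in the $1$-Wasserstein distance. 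There is no genuine obstacle in this argument: the content is entirely carried by Theorem \ref{4mt}, and the only points requiring care are the routine verification that rescaling preserves \textbf{Assumption A} and the equally routine estimate transferring the conclusion across the asymptotically trivial variance correction $\sigma_n\to1$.
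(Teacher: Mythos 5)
Your proposal is correct and follows exactly the route the paper intends: the paper offers no explicit proof, simply declaring the corollary an immediate consequence of the bound \eqref{4mb2}, and your rescaling argument (passing to $\tilde F_n=\sigma_n^{-1}F_n$, checking that {\bf Assumption A} and the fourth-moment convergence survive the rescaling, and transferring back via the triangle inequality for $d_1$) is precisely the routine normalization detail being glossed over. No gaps.
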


\begin{remark}\label{mtrem}
\begin{enumerate}[(a)]
\item As mentioned before, so far, the fourth moment theorem on the Poisson space has only been known in very special cases: for double integrals, i.e. for $q=2$, the qualitative fourth moment theorem was proved in \cite{PT08}. Under different assumptions, this 
result is also proved in \cite{BPsv} where also a qualitative fourth moment theorem for $q=3$ is derived. We would like to mention that the method of proof applied in \cite{BPsv} is rather ad hoc and cannot be generalized to higher values of $q$. We also stress that all existing quantitative fourth moment theorems on the Poisson space make the restrictive assumption that the kernel function $f$ has a constant sign (see e.g. \cite{LRP1, Sch16, ET14}). 
Furthermore, the multiplicative constants in these results depend on the order $q$ in a non-explicit way, { implying that a} statement in the spirit of Corollary \ref{cormt} cannot be inferred from them. We also mention \cite{PZ2}, where one can find a fourth moment theorem for sequences of chaotic elements having the form of homogeneous sums in independent Poisson random variables whose variance is bounded away from zero, as well as \cite{BPjfa}, where the authors prove a fourth moment theorem for multiple integrals with respect of a non-commutative Poisson measure (in the framework of free probability theory), under an additional tameness assumption. { Finally, the already quoted recent contribution \cite{DVZ} also contains several multidimensional extensions of the main findings of the present work.} 
\item { We find it quite remarkable that our fourth moment bounds do not require any additional error term accounting for the discreteness of the Poisson space as is necessary e.g. in the context of degenerate $U$-statistics (see e.g. \cite{deJo90} and \cite{DP16}) as well as for discrete multiple integrals of independent Rademacher random variables (see \cite{DK17}) where such fourth moment theorems { without remainder} do not hold.}
\end{enumerate}
\end{remark}

We also notice the following negative result.

\begin{prop}\label{nogauss}
 For each $q\in\N$, there exists no Gaussian random variable with positive variance in the $q$-th Wiener chaos $C_q$ associated with $\eta$.
\end{prop}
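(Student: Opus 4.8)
The plan is to argue by contradiction and to show that the obstruction to Gaussianity is encoded in a strictly positive ``diagonal'' fourth-cumulant term. By homogeneity we may assume $\E[F^2]=1$, and we suppose that $F=I_q(f)\sim\mathscr N(0,1)$ with $q\geq1$; the goal is to deduce $f=0$, contradicting the positivity of the variance. Since $F$ is an eigenfunction of $-L$ with eigenvalue $q$, we have $F=\tfrac1q(-L)F=\tfrac1q\,\delta D^+F$, where $\delta$ denotes the adjoint (divergence) of the add-one cost operator. Using the adjoint relation together with the pointwise identity $D^+_z(h(F))=h(F+D^+_zF)-h(F)$ (valid since $h(F)=h\circ\mfk$ is again a functional of $\eta$), one obtains, for every sufficiently regular $h$, the integration-by-parts formula
\begin{equation*}
\E[F\,h(F)]=\frac1q\,\E\Bigl[\int_{\calZ}D^+_zF\,\bigl(h(F+D^+_zF)-h(F)\bigr)\,\mu(dz)\Bigr].
\end{equation*}
The integrability needed to justify this is exactly of the type isolated in {\bf Assumption A}.

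Specializing to $h(x)=x$ recovers $\E[F^2]=\tfrac1q\E\bigl[\int_{\calZ}(D^+_zF)^2\mu(dz)\bigr]$, while $h(x)=x^3$ yields, after expanding $(F+D^+_zF)^3-F^3$,
\begin{equation*}
\E[F^4]=\frac3q\E\Bigl[\int_{\calZ}F^2(D^+_zF)^2\mu(dz)\Bigr]+\frac3q\E\Bigl[\int_{\calZ}F(D^+_zF)^3\mu(dz)\Bigr]+\frac1q\E\Bigl[\int_{\calZ}(D^+_zF)^4\mu(dz)\Bigr].
\end{equation*}
Subtracting $3(\E[F^2])^2$ and using the first identity to rewrite this subtracted quantity, the fourth cumulant $\kappa_4(F)=\E[F^4]-3$ splits as the sum of a covariance-type term $\tfrac3q\bigl(\E[\int F^2(D^+_zF)^2\mu]-\E[F^2]\,\E[\int(D^+_zF)^2\mu]\bigr)$, the cross term $\tfrac3q\E[\int F(D^+_zF)^3\mu]$, and the manifestly positive diagonal term $\tfrac1q\E[\int_{\calZ}(D^+_zF)^4\mu(dz)]$. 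The crux is that this last term is strictly positive unless $f=0$: indeed $D^+_zF=q\,I_{q-1}(f(z,\cdot))$, so by the $L^2$-isometry it vanishes for $\mu$-a.e.\ $z$, almost surely, if and only if $f=0$. Thus, once the remaining two terms are shown to contribute nonnegatively, we get $\kappa_4(F)\geq \tfrac1q\E[\int_{\calZ}(D^+_zF)^4\mu(dz)]>0$, contradicting $\kappa_4(F)=0$.

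The main obstacle is precisely the control of the two non-diagonal contributions: the covariance-type term measures the correlation between $F^2$ and $\int(D^+_zF)^2\mu(dz)$, while the cross term $\tfrac3q\E[\int F(D^+_zF)^3\mu]$ is a priori of indefinite sign, so neither can be discarded naively. Resolving this is where the Poisson multiplication and Mecke formulae enter, and it is the same combinatorial mechanism that underlies the inequality $\E[F^4]\geq 3$ recorded in Theorem~\ref{4mt}; one must refine that computation so as to retain the diagonal remainder rather than merely bounding it below by $0$. An alternative, more conceptual route would compare the integration-by-parts identity above, specialized to exponentials $h(x)=e^{itx}$, with the Gaussian Stein identity $\E[F e^{itF}]=it\,e^{-t^2/2}$, producing an integro-differential equation for the characteristic function whose only solution forces $D^+_zF\equiv0$; however, turning the nonlocality of the jump operator into a contradiction seems harder to make rigorous than the direct cumulant computation. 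In either approach the clean endgame is the same: $D^+F\equiv0$ forces $f=0$ through $D^+_zF=q\,I_{q-1}(f(z,\cdot))$, hence $\Var(F)=0$.
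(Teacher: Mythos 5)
There is a genuine gap. Your integration-by-parts identity is correct --- expanding $(F+D_z^+F)^3-F^3$ under the Mecke formula reproduces exactly Lemma \ref{remlemma} of the paper, since $\tfrac{3}{q}\E[F^2\Gamma(F,F)]=\tfrac{3}{q}\E[\int_\calZ F^2(D_z^+F)^2\mu(dz)]+\tfrac{3}{q}\E[\int_\calZ F(D_z^+F)^3\mu(dz)]+\tfrac{3}{2q}\E[\int_\calZ(D_z^+F)^4\mu(dz)]$ --- but you stop precisely at the step that carries all the content: you never show that the covariance-type term and the cross term contribute nonnegatively, and you explicitly label this as the unresolved ``main obstacle''. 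Worse, the positivity is not where you are looking for it. Combining \eqref{vg1} with Lemma \ref{remlemma} gives, for $\E[F^2]=1$,
\begin{equation*}
\E[F^4]-3=3\sum_{p=1}^{2q-1}\Bigl(1-\frac{p}{2q}\Bigr)\Var\bigl(\proj{F^2}{p}\bigr)-\frac{1}{2q}\int_\calZ\E\bigl[(D_z^+F)^4\bigr]\mu(dz),
\end{equation*}
so the ``diagonal'' term enters the fourth cumulant with a \emph{negative} sign; your target inequality $\kappa_4(F)\geq\frac1q\int_\calZ\E[(D_z^+F)^4]\mu(dz)$ is equivalent, after this rewriting, to a nontrivial domination of the low-order variances $\Var(\proj{F^2}{p})$, $p<q$, by the contraction quantity $D_q$, and there is no reason for it to hold when $q\geq2$ (for $q=1$ both non-diagonal terms vanish and you get an identity, which is why the scheme looks plausible). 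A secondary issue is that your Mecke-based identities require {\bf Assumption A} (in particular $(D^+F)^4\in L^1(\Prob\otimes\mu)$), which is not among the hypotheses of the proposition.

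The paper's proof extracts the strict positivity from a different place, namely the projection of $F^2$ onto the \emph{top} chaos $C_{2q}$ rather than from $\int_\calZ\E[(D_z^+F)^4]\mu(dz)$. By the isometry \eqref{vg2} and the combinatorial identity \eqref{vg3}, one has $\E[F^4]-3c^2=\sum_{p=1}^{2q-1}\Var(\proj{F^2}{p})+D_q$ with $D_q\geq0$ an explicit nonnegative combination of contraction norms $\|f\otimes_rf\|^2$; vanishing of the fourth cumulant forces $D_q=0$, hence $f\otimes_rf=0$ for $r=1,\dotsc,q-1$, which contradicts $\|f\|_2^2>0$ (and for $q=1$ one uses $\E[I_1(f)^4]=3\|f\|_2^4+\int_\calZ f^4\,d\mu$ directly). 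This route needs only $\E[F^4]<\infty$, which is automatic for a Gaussian $F$. To salvage your approach you would have to prove the missing lower bound on the non-diagonal terms, and the identity displayed above strongly suggests that this cannot be done term by term.
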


\subsection{Main results on Gamma approximations}

 For $\nu>0$, we denote by $\Gammabar(\nu)$ the so-called \textit{centered Gamma distribution} with parameter $\nu$ which by definition is the distribution of 
\[Z_\nu:=2X_{\nu/2,1}-\nu\,,\]
where, $X_{\nu/2,1}$ has the usual Gamma distribution on $[0,+\infty)$ with shape parameter $\nu/2$ and rate $1$. In particular, one has
\begin{equation*}
 \E[Z_\nu]=0\quad\text{and}\quad \Var(Z_\nu)=\E[Z_\nu^2]=2\nu\,.
\end{equation*}
Moreover, the following moment identity (already exploited in \cite{NP09a}) will play an important role in what follows:
\begin{equation}\label{e:lcm}
 \E[Z_\nu^4]-12\E[Z_\nu^3]-12\nu^2+48\nu=0\,.
\end{equation}
The next result is the counterpart of Theorem \ref{4mt} for centered Gamma approximation.

\begin{theorem}[Fourth moment bound for Gamma approximation]\label{4mtg}
 Fix $\nu>0$ as well as an integer $q\geq1$ and let $F = I_q(f)$ be a multiple Wiener-It\^{o} integral with respect to $\hat{\eta}$, { verifying {\bf Assumption A}}. Assume that $F\in L^4(\Prob)$ and that $\E[F^2]=2\nu$. Also, let $Z_\nu\sim\Gammabar(\nu)$ have the centered Gamma 
 distribution with parameter $\nu$. Then, we have the following bound:
 \begin{align}\label{4mbg}
  d_2(F,Z_\nu)&\leq C_1(\nu)\sqrt{\Babs{\E\bigl[F^4\bigr]-12\E\bigl[F^3\bigr]-12\nu^2+48\nu}}\notag\\
  &\;+C_2(\nu)\biggl(\frac{1}{q}\int_{\mathcal{Z}}\E\bigl[\abs{D_z^+F}^4\bigr]\mu(dz)\biggr)^{1/2}\,,
  \end{align}
where $D^+$ denotes the add-one-cost operator associated with $\eta$ (see Section \ref{framework}) and where we can let 
\begin{align*}
 C_1(\nu)&:=\frac{1}{\sqrt{3}}\max\Bigl(1,\frac{2}{\nu}\Bigr)\quad\text{and}\\
 C_2(\nu)&:=\frac{1}{\sqrt{6}}\max\Bigl(1,\frac{2}{\nu}\Bigr)+\max\Bigr(\sqrt{2\nu},\sqrt{\frac{2}{\nu}}+\sqrt{\frac{\nu}{2}}\Bigr)\,.
\end{align*}
\end{theorem}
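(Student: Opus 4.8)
The plan is to derive the bound \eqref{4mbg} via Stein's method for the centered Gamma distribution combined with the Malliavin-Stein machinery on the Poisson space. The starting point is the Stein characterization of $\Gammabar(\nu)$: a random variable $Z$ has the centered Gamma law $\Gammabar(\nu)$ if and only if $\E\bigl[(2Z+2\nu)g'(Z) - Zg(Z)\bigr]=0$ for a sufficiently rich class of test functions $g$. Correspondingly, for $h\in\mathcal{H}_2$ one solves the Stein equation
\begin{equation*}
(2x+2\nu)g'(x) - x\,g(x) = h(x) - \E[h(Z_\nu)]\,,
\end{equation*}
and the standard regularity theory for this equation yields a solution $g=g_h$ with explicit bounds on $\fnorm{g_h}$, $\fnorm{g_h'}$ and $\fnorm{g_h''}$ in terms of $\max(1,2/\nu)$ — these are precisely the factors appearing in $C_1(\nu)$ and the first summand of $C_2(\nu)$. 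Thus $d_2(F,Z_\nu)=\sup_{h\in\mathcal{H}_2}\babs{\E[h(F)]-\E[h(Z_\nu)]}$ is controlled by
\begin{equation*}
\sup_g\babs{\E\bigl[(2F+2\nu)g'(F)\bigr]-\E\bigl[F\,g(F)\bigr]}\,,
\end{equation*}
the supremum running over solutions with the stated derivative bounds.

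The second step is to convert this Stein expression into Malliavin operators. Writing $F=I_q(f)$ one has $F=-L^{-1}LF=qL^{-1}F$ up to the eigenvalue normalization, so $\E[F g(F)] = \E[\langle Dg(F), -DL^{-1}F\rangle]$ via the integration-by-parts (Mecke-type) formula, exactly as in \cite{PSTU, PZ1}. The carr\'e-du-champ operator $\Gamma$ enters here: the inner product term produces a quantity that, after using the add-one cost representation $D^+_z g(F) = g(F+D^+_zF)-g(F)$ and a second-order Taylor expansion of $g$, splits into a main term involving $\Gamma(F,-L^{-1}F)$ and a residual term. The main term is what gets matched against $(2F+2\nu)g'(F)$, and controlling the difference reduces to estimating $\E\bigl[g'(F)\bigl(2F+2\nu-2\Gamma(F,-L^{-1}F)\bigr)\bigr]$ together with the Taylor remainder. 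The key algebraic input is the centered-Gamma moment identity \eqref{e:lcm}, which is what makes the discrepancy $2F+2\nu - 2\Gamma(F,-L^{-1}F)$ collapse, in $L^2$-norm, to the fourth-cumulant-type quantity $\E[F^4]-12\E[F^3]-12\nu^2+48\nu$ under the square root in the first summand of \eqref{4mbg}.

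The third step is to bound the residual term coming from the non-diffusive nature of the Poisson setting. Unlike the Gaussian case, the chain rule for $D^+$ is only approximate, and the Taylor expansion of $g(F+D^+_zF)-g(F)$ around $g(F)$ leaves a second-order remainder controlled by $\fnorm{g''}$ times a quantity of the order $\int_{\calZ}\E\bigl[\abs{D^+_zF}^3\bigr]\mu(dz)$, or after Cauchy-Schwarz by the $L^4$-norm of the add-one cost; the factor $1/q$ arises from the eigenvalue relation $-DL^{-1}F = q^{-1}DF$. This is precisely the second summand of \eqref{4mbg}, with the $\max(\sqrt{2\nu},\sqrt{2/\nu}+\sqrt{\nu/2})$ factor tracking the dependence of the Stein-solution bounds on $\nu$. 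The main obstacle I expect is the careful bookkeeping in this step: one must show that the residual from replacing the exact add-one increment by its first-order term, as well as the cross terms mixing $g'$ and $g''$, are all dominated by the single fourth-moment quantity $\int_{\calZ}\E[\abs{D^+_zF}^4]\mu(dz)$, and that no uncontrolled combinatorial contributions (of the kind that obstructed earlier approaches working directly with Malliavin derivative norms) survive. Assembling the three estimates and optimizing the constants then yields \eqref{4mbg} with the stated $C_1(\nu)$ and $C_2(\nu)$.
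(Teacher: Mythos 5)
Your overall strategy coincides with the paper's: a Stein equation for $\Gammabar(\nu)$ with solution bounds of order $\max(1,2/\nu)$, an integration by parts via the carr\'e-du-champ/Mecke machinery producing a main term $\E\bigl[\psi'(F)\bigl(2(F+\nu)-\Gamma_0(F,-L^{-1}F)\bigr)\bigr]$ plus a Taylor remainder, and a Cauchy--Schwarz step turning the remainder into $\bigl(\tfrac1q\int_\calZ\E[|D_z^+F|^4]\,\mu(dz)\bigr)^{1/2}$ (this is exactly Proposition \ref{genboundg} and the bound \eqref{sbg1}). So the route is the same in outline.

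There is, however, one concrete inaccuracy in your second step. You assert that the discrepancy $2(F+\nu)-\Gamma_0(F,-L^{-1}F)$ ``collapses, in $L^2$-norm, to'' the quantity $\E[F^4]-12\E[F^3]-12\nu^2+48\nu$, with the moment identity \eqref{e:lcm} as the key input. This is not what happens, and as stated it is false: the paper's Lemma \ref{sandwich} shows that $\Var\bigl(2F-q^{-1}\Gamma(F,F)\bigr)$ is only \emph{sandwiched} between multiples of that cumulant-type quantity \emph{plus} multiples of $\tfrac1q\int_\calZ\E[|D_z^+F|^4]\,\mu(dz)$; the add-one-cost integral cannot be removed from the variance estimate, and it is precisely this contribution that produces the $\tfrac{1}{\sqrt6}\max(1,2/\nu)$ summand of $C_2(\nu)$ (your sketch attributes all of $C_2(\nu)$ to the Taylor remainder, which only accounts for the $\max(\sqrt{2\nu},\sqrt{2/\nu}+\sqrt{\nu/2})$ piece). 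Moreover, the identity \eqref{e:lcm} is only the motivation for the particular cumulant combination; the actual mechanism is the chaos decomposition of $F^2$ (Lemma \ref{vargamma2}) combined with the Mecke-formula identity of Lemma \ref{remlemma}, namely $\tfrac{1}{2q}\int_\calZ\E[|D_z^+F|^4]\,\mu(dz)=\tfrac3q\E[F^2\Gamma(F,F)]-\E[F^4]$, which is what lets one trade $\E[F^2\Gamma(F,F)]$ for $\E[F^4]$ up to the add-one-cost integral. Without this identity (or an equivalent), the passage from $\Var\bigl(2F-q^{-1}\Gamma_0(F,F)\bigr)$ to the two terms of \eqref{4mbg} does not go through, so this step needs to be supplied rather than asserted.
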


\begin{remark}\label{gammarem}
 \begin{enumerate}[(a)]
  \item { The bound \eqref{4mbg} displays an additional term, not directly connected to moments, that is not present in the estimate \eqref{4mb} for normal approximations. For the time being, it is a challenging open problem to determine whether such a term can be removed.}
  \item { The previous result implies that, if, for $n\in\N$, $F_n \in {\rm Ker}(L+q_nI)$ ($q_n\geq 1$) is a sequence of random variables verifying {\bf Assumption A} and such that
  $$
\lim_{n\to\infty} \E[F_n^2] = 2\nu, \quad \lim_{n\to \infty}\left( \E\bigl[F_n^4\bigr]-12\E\bigl[F_n^3\bigr] \right)=12\nu^2-48\nu,
  $$
  and 
  \begin{equation}\label{e:gz}
  \lim_{n\to\infty}  \int_{\mathcal{Z}}\E\bigl[\abs{D_z^+F_n}^4\bigr]\mu(dz) = 0,
  \end{equation}
  then $F_n$ converges in distribution to $Z_\nu$. { This result largely extends the Gamma limit theorem for double integrals stated in \cite[Proposition 4.7]{PTh}, where \eqref{e:gz} is replaced by the requirement that $f_n\to 0$ in $L^4$, with $f_n$ denoting the function in $q_n$ variables such that $F_n = I_{q_n}(f_n)$. }.  In general, if $q_n \equiv q$, and $F_n$ has the form $I_q(f_n)$ for some sufficiently regular kernel $f_n$, then one sufficient condition in order to have \eqref{e:gz} is that all contractions of the type $f_n\star_b^a f_n$ { with $a<b$} converge to zero in $L^2$, where the definition of $f_n\star_b^a f_n$  can be found e.g. in \cite[Section 6]{Lastsv}; see the computations contained in \cite[p. 465-466]{PSTU}. A detailed discussion of the Gamma bound \eqref{4mbg} via the use of contraction operators (in the sprit e.g. of \cite{PTh, FT}) seems to be outside the scope of the present work, and will be tackled elsewhere; see also \cite{DP16b}.
 }
\item { The parametrization of the Gamma distribution which we use is mainly chosen for historical reasons (
see e.g. the papers \cite{NP09a, PTh}) and for convenience because multiple integrals are naturally centered. Of course, 
in general, the Gamma distribution also involves an unimportant rate parameter $\lambda>0$. This situation can be easily dealt with by considering $\lambda^{-1} Z_\nu$ in place of $Z_\nu$ and by using the simple inequality 
$d_2(F, \lambda^{-1} Z_\nu)\leq \lambda^{-1}d_2(\lambda F, Z_\nu)$ which can be bounded by means of 
\eqref{4mbg} whenever $F$ is in $C_q$ and $\E[F^2]=2\nu\lambda^{-2}$.}
 \end{enumerate}
\end{remark}

\subsection{Plan} The paper is organised as follows. Section 2 contains preliminary results concerning stochastic analysis on the Poisson space. Section 3 focusses on several new estimates for multiple integrals, whereas Section 4 and Section 5 deal with the proofs of our main results. Finally, Section 6 contains the proofs of some technical lemmas.

\section*{Acknowledgments} The authors would like to thank the two anonymous referees, the Associate Editor and Guangqu Zheng for insightful remarks and comments. We are also grateful to Solesne Bourguin, Simon Campese, G\"unter Last and Matthias Schulte for useful discussions. Support is acknowledged from the grant {\tt F1RMTH-PUL-15STAR}
(STARS) at Luxembourg University.

\smallskip

\section{Elements of stochastic analysis on the Poisson space}\label{framework}

In this section, we describe our theoretical framework in more detail, {{} by adopting the language of \cite{Lastsv}, corresponding to Chapter 1 in \cite{PecRei16}}. See also the monograph \cite{LPbook}.

\subsection{Setup}\label{ss:setup}  In what follows, we will view the Poisson process $\eta$ as a random element {{} taking values} in the space $\mathbf{N}_\sigma=\mathbf{N}_\sigma(\mathcal{Z})$ of all $\sigma$-finite point measures $\chi$ on $(\mathcal{Z},\mathscr{Z})$ that satisfy $\chi(B)\in\N_0\cup\{+\infty\}$ for all $B\in\mathscr{Z}$. {{} Such a space is} equipped with the smallest 
$\sigma$-field $\mathscr{N}_\sigma:=\mathscr{N}_\sigma(\calZ)$ {{} such that}, for each $B\in\scrZ$,  the mapping $\mathbf{N}_\sigma\ni\chi\mapsto\chi(B)\in[0,+\infty]$ is measurable. {{} As anticipated, throughout the paper we shall assume that the process $\eta$ is {\it proper}, in the sense that $\eta$ can be $\Prob$-a.s. represented in the form
$$
\eta = \sum_{n=1}^{\eta(\mathcal{Z})}\delta_{X_n},
$$
where $\{X_n : n\geq 1\}$ denotes a countable collection of random elements with values in $\mathcal{Z}$ and where, for $z\in \mathcal{Z}$, we write $\delta_z$ for the \textit{Dirac measure} at $z$. A sufficient condition for $\eta$ to be proper is e.g. that $(\mathcal{Z},\mathscr{Z})$ is a Polish space endowed with its Borel $\sigma$-field, with $\mu$ being $\sigma$-finite as above. See \cite[Section 6.1]{LPbook} and \cite[p. 2-3]{Lastsv} for more details. {{} Furthermore, Corollary 3.7 in \cite{LPbook} states that for each Poisson process $\eta$, there exists (maybe on a different probability space) a proper Poisson process $\eta^*$ which has the same distribution as $\eta$. Since all our results depend uniquely on the distribution of $\eta$, it is no restriction of generality to assume that $\eta$ is proper.} 

\smallskip

Now denote by $\mathbf{F}(\mathbf{N}_\sigma)$ the class of all measurable functions $\mathfrak{f}:\mathbf{N}_\sigma\rightarrow\R$ and by $\mathcal{L}^0(\Omega):=\mathcal{L}^0(\Omega,\F)$ the class of real-valued, measurable functions $F$ on $\Omega$. 
Note that, as $\F=\sigma(\eta)$, each $F\in \mathcal{L}^0(\Omega)$ can be written as $F=\mfk(\eta)$ for some measurable function $\mfk$. This $\mfk$, called a {\it representative} of $F$, is $\Prob_\eta$-a.s. uniquely defined, where $\Prob_\eta=\Prob\circ\eta^{-1}$ is the image measure of $\Prob$ under $\eta$ on the space $(\mathbf{N}_\sigma,\mathscr{N}_\sigma)$. 

\smallskip

Using a representative $\mfk$ of $F$, we can define the so-called \textit{add-one cost operator} $D^+=(D_z^+)_{z\in\mathcal{Z}}$ on $\mathcal{L}^0(\Omega)$ (recall that we assume $\F=\sigma(\eta)$) by 
\begin{equation}\label{defdp}
 D_z^+F:=\mfk(\eta+\delta_z)-\mfk(\eta)\,,\quad z\in\mathcal{Z};
\end{equation}
similarly, we define $D^-$ on $\mathcal{L}^0(\Omega)$ via
\begin{equation}\label{defdm}
 D_z^-F:=\mfk(\eta)-\mfk(\eta-\delta_z)\,,\,\, \mbox{ if\ \ } z\in {\rm supp}({\eta}) \,,\,\, \mbox{and\ \ } D_z^-F:=0, \,\, \mbox{otherwise,} \end{equation}
{ where, here, 
\[{\rm supp}({\chi}):=\bigl\{z\in\mathcal{Z}\,:\, \text{for all $A\in\mathscr{Z}$ s.t. $z\in A$: $\chi(A)\geq1$}\bigr\}\]
stands for the support of the measure $\chi\in{\bf N}_\sigma$}; note that, since $\eta$ is proper, if $z\in {\rm supp}(\eta)$, then $\eta-\delta_z \in {\bf N}_\sigma$. Intuitively, $-D^-$ is a \textit{remove-one cost operator}. We stress that the definitions of $D^+F$ and $D^-F$ are, respectively, $\Prob\otimes \mu$-a.e. and $\Prob$-a.s. independent of the choice of the representative $\mfk$ --- {{} see e.g. \cite[Lemma 2.4]{LaPen11} for the case of $D^+$, whereas the case of $D^-$ can be dealt with by using the Mecke formula \eqref{mecke2} below}. {{} Similarly, the conditions stated in {\bf Assumption ${\bf A}$} and  {\bf Assumption ${\bf A}^{\rm (loc)}$} do not depend on the choice of the representative $\mfk$.}

\smallskip

We conclude the section by observing that the operator $D^+$ can be canonically iterated by setting $D^{(1)}:=D^{+}$ and, for $n\geq2$ and $z_1,\dotsc,z_n\in\calZ$ and $F\in\mathcal{L}^0(\Omega)$, by recursively defining
\begin{equation*}
 D^{(n)}_{z_1,\dotsc,z_n}F:=D^{+}_{z_1}\bigl(D^{(n-1)}_{z_2,\dotsc,z_n}F\bigr).
\end{equation*}

\smallskip

\subsection{$L^1$ theory: Mecke formula and $\Gamma_0$} A central formula in the theory of Poisson processes is the so-called \textit{Mecke formula} from \cite{Mecke} which says that for each measurable function $h:\mathbf{N}_\sigma\times\calZ\rightarrow [0,+\infty]$ the identity
\begin{equation}\label{mecke}
\E\biggl[\int_\calZ h(\eta+\delta_z,z)\mu(dz)\biggr] = \E\biggl[\int_\calZ h(\eta,z)\eta(dz)\biggr]
\end{equation}
holds true; see \cite[Chapter 4]{LPbook} for a modern discussion of this fundamental result. {{} We will pervasively use the following consequence of \eqref{mecke}:
\begin{lemma}\label{l:mecke} For some integer $d\geq 1$, let $\mfk_1,...,\mfk_d$ be measurable mappings from $\mathbf{N}_\sigma$ into $[0,+\infty]$, and let $V : [0,+\infty]^{2d}\to [0,+\infty]$ be measurable. Then,
\begin{eqnarray}\notag
&&\E \left[\int_\calZ V(z)\mu(dz)\right]:= \E\left[\int_\calZ V\big(\mfk_1(\eta), \mfk_1(\eta+\delta_z),...,\mfk_d(\eta), \mfk_d(\eta+\delta_z)\big) \mu(dz)\right] \\
&& = \E\left[\int_\calZ V\big(\mfk_1(\eta-\delta_z), \mfk_1(\eta),...,\mfk_d(\eta-\delta_z), \mfk_d(\eta)\big)  \eta(dz)\right].\label{mecke2}
\end{eqnarray}
Both sides of \eqref{mecke2} do not change if any of the $\mfk_i$, $i=1,...,d$ is replaced with another measurable mapping $\widehat{\mfk}_i$ such that $\mfk_i = \widehat{\mfk}_i$, a.s.-$\Prob_\eta$.
\end{lemma}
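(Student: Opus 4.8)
The plan is to read \eqref{mecke2} as nothing more than a single application of the Mecke formula \eqref{mecke} to one carefully chosen integrand, so that essentially no computation is required. Concretely, I would consider the map $\mathbf{N}_\sigma\times\calZ\ni(\chi,z)\mapsto\chi-\delta_z$, which is a well-defined element of $\mathbf{N}_\sigma$ whenever $z\in\supp(\chi)$ (by properness of $\eta$), and set
$$
h(\chi,z):=V\bigl(\mfk_1(\chi-\delta_z),\mfk_1(\chi),\ldots,\mfk_d(\chi-\delta_z),\mfk_d(\chi)\bigr)\,\mathbf{1}_{\{z\in\supp(\chi)\}}.
$$
The point is that evaluating $h$ at the two arguments occurring in \eqref{mecke} reproduces exactly the two integrands of \eqref{mecke2}. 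On the one hand, for every $z$ one has $z\in\supp(\eta+\delta_z)$ and $(\eta+\delta_z)-\delta_z=\eta$, whence $h(\eta+\delta_z,z)=V\bigl(\mfk_1(\eta),\mfk_1(\eta+\delta_z),\ldots,\mfk_d(\eta),\mfk_d(\eta+\delta_z)\bigr)=V(z)$; on the other hand, the right-hand side of \eqref{mecke} integrates against $\eta(dz)$, which charges only points $z\in\supp(\eta)$, so there $h(\eta,z)=V\bigl(\mfk_1(\eta-\delta_z),\mfk_1(\eta),\ldots,\mfk_d(\eta-\delta_z),\mfk_d(\eta)\bigr)$. Substituting this $h$ into \eqref{mecke} gives \eqref{mecke2} immediately. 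Since $V$ and the $\mfk_i$ take values in $[0,+\infty]$, the function $h$ is nonnegative, so both sides are unambiguously defined in $[0,+\infty]$ and no integrability assumption is needed: this is precisely why the statement is phrased with nonnegative quantities.

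The one preliminary fact I would need to record is the measurability of $h$. This reduces to the joint measurability of $(\chi,z)\mapsto\chi-\delta_z$ on the set $\{z\in\supp(\chi)\}$, combined with the measurability of $V$ and of the $\mfk_i$. The first (and only genuinely delicate) ingredient is a standard consequence of the properness of $\eta$ --- this is exactly the reason properness is assumed throughout --- and I would simply quote it from \cite{LPbook} (see also \cite{Lastsv}) rather than reprove it. I expect this measurability/well-definedness of the remove-one operation to be the main obstacle of the argument; everything else is bookkeeping.

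It remains to verify the invariance under $\Prob_\eta$-a.s.\ modification of the representatives. I would establish it first for the left-hand side. Assuming $\widehat{\mfk}_i=\mfk_i$ holds $\Prob_\eta$-a.s., i.e.\ $\Prob\bigl(\mfk_i(\eta)\neq\widehat{\mfk}_i(\eta)\bigr)=0$, the arguments of the form $\mfk_i(\eta)$ agree with $\widehat{\mfk}_i(\eta)$ for $\Prob$-a.e.\ $\omega$, so it suffices to control the arguments of the form $\mfk_i(\eta+\delta_z)$. Applying \eqref{mecke} to the ($z$-independent) nonnegative integrand $h(\chi,z)=\mathbf{1}_{\{\mfk_i(\chi)\neq\widehat{\mfk}_i(\chi)\}}$ yields
$$
\E\Bigl[\int_\calZ\mathbf{1}_{\{\mfk_i(\eta+\delta_z)\neq\widehat{\mfk}_i(\eta+\delta_z)\}}\,\mu(dz)\Bigr]=\E\Bigl[\mathbf{1}_{\{\mfk_i(\eta)\neq\widehat{\mfk}_i(\eta)\}}\,\eta(\calZ)\Bigr]=0,
$$
using the convention $0\cdot(+\infty)=0$ together with the hypothesis. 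Hence $\mfk_i(\eta+\delta_z)=\widehat{\mfk}_i(\eta+\delta_z)$ for $\Prob\otimes\mu$-a.e.\ $(\omega,z)$ and every $i$, so all $2d$ arguments of $V$ agree $\Prob\otimes\mu$-a.e.\ and the left-hand side of \eqref{mecke2} is unchanged. Finally, for the right-hand side I would avoid a second computation altogether: the identity \eqref{mecke2} has just been proved for \emph{every} admissible family of representatives, in particular for both $(\mfk_i)_i$ and $(\widehat{\mfk}_i)_i$, so the invariance of the right-hand side follows at once from that of the left-hand side.
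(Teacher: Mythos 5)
Your proposal is correct and follows essentially the same route as the paper: both apply the Mecke formula \eqref{mecke} to the integrand $h(\chi,z)=V\bigl(\mfk_1(\chi-\delta_z),\mfk_1(\chi),\dots\bigr)\mathbf{1}_{\{\chi(\{z\})\geq 1\}}$, using that $(\eta+\delta_z)-\delta_z=\eta$ and that $\eta(dz)$ only charges its support. The only divergence is that for the invariance under $\Prob_\eta$-a.s.\ modification the paper simply cites \cite[Lemma 2.4]{LaPen11}, whereas you reprove that lemma (correctly) via the same Mecke argument.
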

\begin{proof} Apply relation \eqref{mecke} to the random function
\begin{eqnarray*}
&&h(\eta+\delta_z, z) :=V(z) = V\big(\mfk_1(\eta), \mfk_1(\eta+\delta_z),...,\mfk_d(\eta), \mfk_d(\eta+\delta_z)\big) \\
&&= V\big(\mfk_1(\eta+\delta_z-\delta_z), \mfk_1(\eta+\delta_z),...,\mfk_d(\eta+\delta_z-\delta_z), \mfk_d(\eta+\delta_z)\big){\bf 1}_{\{(\eta+\delta_z)(\{z\})\geq 1 \} },
\end{eqnarray*}
in such a way that
$$
h(\eta, z) = V\big(\mfk_1(\eta-\delta_z), \mfk_1(\eta),...,\mfk_d(\eta-\delta_z), \mfk_d(\eta)\big){\bf 1}_{\{\eta(\{z\})\geq 1 \} }.
$$
The last sentence in the statement follows from \cite[Lemma 2.4]{LaPen11}.
\end{proof}
\begin{remark}\label{r:ovvio}
Plainly, formulae \eqref{mecke} and \eqref{mecke2} continue to hold when the functions $h(\eta+\delta_z, z)$ and $V\big(z\big)$ are in $L^1 (\Prob\otimes \mu)$, without necessarily having a constant sign. 
\end{remark}
}

\smallskip

{{} For random variables $F,G\in \mathcal{L}^0(\Omega)$ such that $D^+F\, D^+G\in L^1(\Prob\otimes \mu)$, we define
\begin{equation}\label{e:Gamma0}
\Gamma_0(F,G) := \frac12\left\{  \int_\calZ (D_z^+F D_z^+G) \, \mu(dz)  + \int_\calZ (D_z^-F D_z^-G) \, \eta(dz)\right\}
\end{equation} 
which verifies $\E[|\Gamma_0(F,G) |] <\infty$, and $\E[\Gamma_0(F,G)] =\E[  \int_\calZ (D_z^+F D_z^+G) \,  \mu(dz) ]$, in view of the Mecke formula \eqref{mecke2}. The following statement will play a fundamental role in our work.

\begin{lemma}[$L^1$ integration by parts]\label{l:ibpl1} Let $G,H \in \mathcal{L}^0(\Omega)$ be such that $$G D^+H, \,\, D^+G\, D^+H\in L^1(\Prob\otimes \mu).$$  
Then,
\begin{equation}\label{e:ibpl1}
\E\left[ G\left(\int_\calZ D_z^+H\, \mu(dz) - \int_\calZ D_z^ -H\, \eta(dz)\right)\right] = -\E[\Gamma_0(G,H)]. 
\end{equation}
\end{lemma}
\begin{proof}
The assumptions in the statement imply that $(G + D^+ G)D^+ H\in L^1(\Prob\otimes \mu)$. Applying \eqref{mecke2} and Remark \ref{r:ovvio} to 
$$
V(z) = \mgk(\eta+\delta_z)\{ \mhk(\eta+\delta_z) - \mhk(\eta)\},
$$
where $\mgk$ and $\mhk$ are representatives of $G$ and $H$, respectively, yields that
$$
\E\left[ G \int_\calZ D_z^ -H\, \eta(dz)\right] = \E\left[ \int_\calZ (G+D^+_zG) D_z^ +H\, \mu(dz)\right], 
$$  
which gives immediately the desired conclusion.
\end{proof}

}

\smallskip

\subsection{$L^2$ theory, part 1: multiple integrals}\label{L21}
 For an integer $p\geq1$ we denote by $L^2(\mu^p)$ the Hilbert space of all square-integrable and real-valued functions on $\mathcal{Z}^p$ and we write $L^2_s(\mu^p)$ 
for the subspace of those functions in $L^2(\mu^p)$ which are $\mu^p$-a.e. symmetric. Moreover, for ease of notation, we denote by $\Enorm{\cdot}$ and $\langle \cdot,\cdot\rangle_2$ the usual norm and scalar product 
on $L^2(\mu^p)$ for whatever value of $p$. We further define $L^2(\mu^0):=\R$. For $f\in L^2(\mu^p)$, we denote by $I_p(f)$ the \textit{multiple Wiener-It\^o integral} of $f$ with respect to $\hat{\eta}$. 
If $p=0$, then, by convention, 
$I_0(c):=c$ for each $c\in\R$.
We refer to Section 3 of \cite{Lastsv} for a precise definition and the following basic properties of these integrals in the general framework of a $\sigma$-finite measure space $(\mathcal{Z},\mathscr{Z},\mu)$. Let $p,q\geq0$ be integers: 
\begin{enumerate}[1)]
 \item $I_p(f)=I_p(\tilde{f})$, where $\tilde{f}$ denotes the \textit{canonical symmetrization} of $f\in L^2(\mu^p)$, i.e. with $\mathbb{S}_p$ the symmetric group acting on $\{1,\dotsc,p\}$ we have
 \[\tilde{f}(z_1,\dotsc,z_p)=\frac{1}{p!}\sum_{\pi\in\mathbb{S}_p} f(z_{\pi(1)},\dotsc,z_{\pi(p)})\,.\]
 \item $I_p(f)\in L^2(\Prob)$, and $\E\bigl[I_p(f)I_q(g)\bigr]= \delta_{p,q}\,p!\,\langle \tilde{f},\tilde{g}\rangle_2 $, where $\delta_{p,q}$ denotes \textit{Kronecker's delta symbol}.
\end{enumerate}

\smallskip

For $p\geq0$, the Hilbert space consisting of all random variables $I_p(f)$, $f\in L^2(\mu^p)$, is called the \textit{$p$-th Wiener chaos} associated with $\eta$, and is customarily denoted by $C_p$. It is a crucial fact that every $F\in L^2(\Prob)$ admits 
a unique representation 
\begin{equation}\label{chaosdec}
 F=\E[F]+\sum_{p=1}^\infty I_p(f_p)\,,
\end{equation}
where $f_p\in L_s^2(\mu^p)$, $p\geq1$, are suitable symmetric kernel functions, and the series converges in $L^2(\Prob)$. Identity \eqref{chaosdec} is referred to as the \textit{chaotic decomposition} of the functional $F\in L^2(\Prob)$. 

\smallskip

 From Theorem 2 in \cite{Lastsv} (which is Theorem 1.3 from the article \cite{LaPen11}) it is known that, for all $F\in L^2(\Prob)$ and all $p\geq1$, the kernel $f_p$ in \eqref{chaosdec} is explicitly given by 
\begin{equation}\label{kerform}
 f_p(z_1,\dotsc,z_p)=\frac{1}{p!}\E\bigl[D^{(p)}_{z_1,\dotsc,z_p}F\bigr]\,,\quad z_1,\dotsc,z_p\in\calZ\,.
\end{equation}

The following new lemma, which relies on \eqref{kerform} and whose proof is deferred to Section \ref{proofs}, will be essential for the proof of Theorem \ref{4mt}.

\begin{lemma}\label{chaosorder}
 Let $p,q\geq1$ be integers and let the multiple Wiener-It\^{o} integrals $F=I_p(f)$ and $G=I_q(g)$ be in $L^4(\Prob)$ and given by symmetric kernels $f\in L^2(\mu^p)$ and $g\in L^2(\mu^q)$, respectively. 
 \begin{enumerate}[{\normalfont (a)}]
  \item The product $FG$ has a finite chaotic decomposition of the form\\
  $FG=\sum_{r=0}^{p+q}\proj{FG}{r}=\sum_{r=0}^{p+q}I_r(h_r)$ with symmetric kernels $h_r\in L_s^2(\mu^r)$.
  \item The kernel $h_{p+q}$ in {\normalfont (a)} is explicitly given by $h_{p+q}=f\tilde{\otimes} g$, where $f\otimes g\in L^2(\mu^{p+q})$ denotes the tensor product of $f$ and $g$ defined by
  \[f\otimes g(z_1,\dotsc,z_{p+q})=f(z_1,\dotsc,z_p) g(z_{p+1},\dotsc,z_{p+q})\]
  and $f\tilde{\otimes} g$ denotes its canonical symmetrization. 
 \end{enumerate}
\end{lemma}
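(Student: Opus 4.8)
The plan is to establish both parts of Lemma \ref{chaosorder} by combining the product representation of multiple integrals on the Poisson space with the explicit kernel formula \eqref{kerform}. For part (a), the starting point is the multiplication formula for multiple Wiener-It\^o integrals on the Poisson space (see \cite[Proposition 5]{Lastsv}), which expresses the product $I_p(f)I_q(g)$ as a \emph{finite} linear combination of multiple integrals $I_r(\cdot)$ of contraction kernels, with $r$ ranging over $\{0,1,\dotsc,p+q\}$. The fact that the sum is finite, with top order exactly $p+q$, already yields the structural claim $FG=\sum_{r=0}^{p+q}I_r(h_r)$; the symmetric kernels $h_r$ are then obtained (up to symmetrization) as sums of the contraction kernels landing in chaos order $r$. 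The one subtlety here is integrability: the multiplication formula in its raw form requires suitable $L^2$ control of the contraction kernels, so I would first invoke the hypothesis $F,G\in L^4(\Prob)$ to guarantee that $FG\in L^2(\Prob)$, so that $FG$ genuinely admits a chaotic decomposition \eqref{chaosdec}, and that this decomposition coincides with the finite one produced by the multiplication formula. The projection $\proj{FG}{r}$ onto $C_r$ is then identified with $I_r(h_r)$.

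For part (b), I would compute the top-order kernel $h_{p+q}$ directly via the kernel formula \eqref{kerform}, rather than tracking it through the combinatorics of the multiplication formula. Applying \eqref{kerform} to the functional $FG$ gives
\begin{equation*}
h_{p+q}(z_1,\dotsc,z_{p+q})=\frac{1}{(p+q)!}\,\E\bigl[D^{(p+q)}_{z_1,\dotsc,z_{p+q}}(FG)\bigr]\,.
\end{equation*}
The key computation is then to expand the iterated add-one cost operator $D^{(p+q)}$ applied to a product. Since $D^+_z(FG)=(D^+_zF)G+F(D^+_zG)+(D^+_zF)(D^+_zG)$ is a discrete Leibniz rule, iterating $p+q$ times produces a sum over ways of distributing the $p+q$ points among $F$, among $G$, or shared between both. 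The crucial observation is that $F=I_p(f)$ is annihilated by applying $D^+$ more than $p$ times (its chaotic expansion has order exactly $p$), and similarly $G=I_q(g)$ is killed by more than $q$ applications. Hence in the expansion of $D^{(p+q)}(FG)$ the only surviving terms are those in which each of the $p+q$ points is assigned to \emph{exactly one} of $F$ or $G$, with precisely $p$ points going to $F$ and $q$ to $G$ and no shared points. Taking expectations and using \eqref{kerform} again for $F$ and $G$ separately, each surviving term contributes $p!\,q!$ times a product $f(\cdots)g(\cdots)$ over a partition of the indices, and summing over all $\binom{p+q}{p}$ assignments and symmetrizing yields exactly $f\tilde\otimes g$.

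The main obstacle I anticipate is making the Leibniz-type expansion of $D^{(p+q)}(FG)$ rigorous and justifying the vanishing of the shared-point terms. The discrete product rule for $D^+$ contains the genuinely nonlinear cross term $(D^+_zF)(D^+_zG)$, so the combinatorics of iterating it is more delicate than in the Gaussian (derivation) case; I would organize the expansion by bookkeeping, for each index $z_i$, whether it acts on $F$ only, on $G$ only, or on both, and then argue that any multi-index in which some point is shared, or in which strictly more than $p$ points hit $F$ (equivalently strictly fewer than $q$ hit $G$), contributes zero in expectation because the corresponding conditional kernel of $F$ or $G$ vanishes above its chaos order. This vanishing is precisely where I would lean on \eqref{kerform}, which forces $\E[D^{(m)}F]=0$ for $m>p$ when $F\in C_p$. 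A secondary technical point, needed to interchange expectation with the finite expansion and to apply \eqref{kerform} termwise, is the $L^4$ hypothesis, which (together with the hypotheses guaranteeing the relevant add-one cost functionals are integrable) ensures all the intermediate quantities lie in $L^1(\Prob)$; I would state this integrability check but not belabor it, since it is routine given $F,G\in L^4(\Prob)$.
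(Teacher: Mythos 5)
Your argument for part (b) is in the same spirit as the paper's: both rest on the kernel identity \eqref{kerform} applied to $FG$ together with the discrete Leibniz rule \eqref{e:mix+}; the paper organizes the bookkeeping as an induction on $p+q$, whereas you propose a direct multinomial expansion of $D^{(p+q)}(FG)$. However, your treatment of part (a) has a genuine gap. You invoke the multiplication formula of \cite[Proposition 5]{Lastsv} to get the finite decomposition, and claim that the hypothesis $F,G\in L^4(\Prob)$ resolves the integrability issue. It does not: $F,G\in L^4(\Prob)$ gives $FG\in L^2(\Prob)$, hence the existence of \emph{some} chaotic decomposition of $FG$, but the multiplication formula requires the square-integrability of the individual star-contraction kernels $f\star^l_r g$, and this is precisely what cannot be deduced from the minimal assumptions of the lemma --- the paper's Remark \ref{corem} exists exactly to warn against this step. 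The correct route to (a), which is also what the paper does, is the one you already set up for (b): since $FG\in L^2(\Prob)$, formula \eqref{kerform} identifies every kernel $h_m$ as $\frac{1}{m!}\E[D^{(m)}(FG)]$, and the Leibniz expansion shows this vanishes for $m>p+q$, so no appeal to the product formula is needed at all.

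A secondary but real issue concerns how you kill the ``shared-point'' and over-saturated terms in the expansion. You say these ``contribute zero in expectation because \eqref{kerform} forces $\E[D^{(m)}F]=0$ for $m>p$.'' That fact is insufficient: the offending terms are expectations of \emph{products} such as $\E\bigl[(D^{(a)}F)(D^{(b)}G)\bigr]$, and the vanishing of $\E[D^{(m)}F]$ does not make a product vanish. What you actually need is the a.e.\ pointwise identity $D^{(m)}F=0$ for $m>p$, which follows from iterating \eqref{e:D}--\eqref{altDz} to get that $D^{(p)}F=p!\,f$ is deterministic --- not from \eqref{kerform}. The paper's induction avoids this delicacy altogether: the cross term $pq\,\tilde F_{z_k}\tilde G_{z_k}$ is itself a product of multiple integrals of total order $p+q-2$, so the induction hypothesis for claim (b) applies to it directly and yields $\E[D^{(k-1)}(\tilde F_{z_k}\tilde G_{z_k})]=0$ without any pointwise annihilation statement. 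If you repair part (a) as indicated and replace the appeal to \eqref{kerform} by the deterministic-top-derivative fact (or recast your expansion as the paper's induction), your proof goes through.
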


\begin{remark}\label{corem}
 {{} We stress that the statement of Lemma \ref{chaosorder} {\it is not} a direct consequence of the {product formula} for multiple Wiener-It\^{o} integrals on the Poisson space (see e.g. Proposition 5 in \cite{Lastsv} and the discussion therein),
since such a result assumes the square-integrability of the so-called `star contractions kernels' $f\star_r^l g$ associated with $f$ and $g$. It is easily seen that such an integrability property cannot be directly deduced from the minimal assumptions of Lemma \ref{chaosorder}. }
 \end{remark}

\subsection{$L^2$ theory, part 2: Malliavin operators and carr\'e-du-champ}\label{L22}
 We now briefly discuss Malliavin operators on the Poisson space. 
\begin{enumerate}[\bf (i)]

\item The \textit{domain} $\dom D$ of the {\it Malliavin derivative operator }$D$ is the set of all $F\in L^2(\Prob)$ such that the chaotic decomposition \eqref{chaosdec} of $F$ satisfies $\sum_{p=1}^\infty p\,p!\Enorm{f_p}^2<\infty$. For such an $F$, the random function $\mathcal{Z}\ni z\mapsto D_zF\in L^2(\Prob)$ is defined via
\begin{equation}\label{e:D}
 D_zF=\sum_{p=1}^\infty p I_{p-1}\bigl(f_p(z,\cdot)\bigr)\,,
\end{equation}
{{} whenever $z$ is such that the series is converging in $L^2(\Prob)$ (this happens a.e.-$\mu$), and set to zero otherwise; note that $f_p(z,\cdot)$ is an a.e. symmetric function on $\mathcal{Z}^{p-1}$.} Hence, $DF=(D_zF)_{z\in\mathcal{Z}}$ is indeed an element of $L^2\bigl(\Prob\otimes \mu\bigr)$. 
 It is well-known (see e.g. \cite[Lemma 3.1]{PTh}) that, $F \in \dom D$ if and only if $D^+F\in L^2\bigl(\Prob\otimes \mu\bigr)$, and in this case 
\begin{equation}\label{altDz}
D_zF=D_z^+F,\quad \Prob\otimes \mu{\rm -a.e.} . 
\end{equation}

\item The domain $\dom L$ of the \textit{Ornstein-Uhlenbeck generator} $L$ is the set of those $F\in L^2(\Prob)$ whose chaotic decomposition \eqref{chaosdec} verifies $\sum_{p=1}^\infty p^2\,p!\Enorm{f_p}^2<\infty$ (so that $\dom L \subset \dom D$) and, for $F\in\dom L$, one defines
\begin{equation}\label{defL}
 LF=-\sum_{p=1}^\infty p I_p(f_p)\,.
\end{equation}
By definition, $\E[LF]=0$; also, from \eqref{defL} it is easy to see that $L$ is \textit{symmetric} in the sense that 
\begin{equation*}
 \E\bigl[(LF)G\bigr]=\E\bigl[F(LG)\bigr]
\end{equation*}
for all $F,G\in\dom L$. Note that, from \eqref{defL}, it is immediate that the spectrum of $-L$ is given by the nonnegative integers and that $F\in \dom L$ is an eigenfunction of $-L$ with corresponding eigenvalue $p$ if and only if $F=I_p(f_p)$ for some $f_p\in L^2_s(\mu^p)$, that is: 
\begin{equation*}
 C_p={\rm Ker}(L+pI).
\end{equation*}
For $F\in L^2(\Prob)$ given by \eqref{chaosdec} and $p\in\N_0$ we write 
\begin{equation*}
 \proj{F}{p}=I_p(f_p)
\end{equation*}
for the projection of $F$ onto $C_p$, with $f_0:=\E[F]$.
{{} The following identity, which corresponds to formula (65) in \cite{Lastsv}, will play an important role in the sequel: if $F\in\dom L$ is such that $D^+ F\in L^1(\Prob\otimes\mu)$, then 
\begin{equation}\label{formL}
 LF=\int_ \mathcal{Z} \bigl(D_z^+F\bigr) \mu(dz)-\int_\mathcal{Z}\bigl(D_z^{-}F\bigr)\eta(dz)\,.
\end{equation}

\item In order to deal with bounds in the Kolmogorov distance, we will also exploit the properties of the \textit{Skohorod integral operator} $\delta$ associated with $\eta$, which is characterised by the following {\it duality relation}:
\begin{equation}\label{intparts2}
 \E\bigl[G\delta(u)\bigr]=\E\bigl[\langle DG,u\rangle_{L^2(\mu)}\bigr]\quad\text{for all } G\in\dom D,\, u\in\dom\delta,\,
\end{equation}
where ${\rm dom}\, \delta$ stands for its domain (see \cite[p.14-15]{Lastsv}). Recall that the operator $\delta$ satisfies the classical identity 
\begin{equation}\label{e:ldeltad}
L=-\delta D,
\end{equation}
that has to be understood in the following sense: $F\in {\rm dom } \, L$ if and only if $F\in {\rm dom}\, D$ and $DF\in \dom \delta$, and in this case $\delta D F = -LF$. Also, if $u(\eta, \cdot)\in L^1(\Prob\otimes \mu)\cap {\rm dom}\, \delta $, then
\begin{equation}\label{e:pdelta}
\delta(u) = \int_\Z u(\eta - \delta_z, z) \eta(dz) - \int_\Z u(\eta, z) \mu(dz), \quad \mbox{a.s.-}\Prob;
\end{equation}
see \cite[Theorem 6]{Lastsv} for a proof of this fact.
}

\item As it is customary in the theory of Markov generators, see e.g. \cite{BGL14}, for suitable random variables $F,G\in\dom L$ such that $FG\in\dom L$, we introduce the \textit{carr\'{e}-du-champ operator} $\Gamma$ associated with $L$ by 
\begin{equation}\label{cdc}
 \Gamma(F,G):=\frac{1}{2}\bigl(L(FG)-FLG-GLF\bigr)\,.
\end{equation}
{{} The symmetry of $L$} implies immediately the crucial \textit{integration by parts formula} 
\begin{equation}\label{intparts}
 \E\bigl[(LF)G\bigr]=\E\bigl[F(LG)\bigr]=-\E\bigl[\Gamma(F,G)\bigr].
\end{equation}
The connection between \eqref{intparts} and \eqref{e:ibpl1} will be clarified in the discussion to follow.

\smallskip

\item The domain $\dom L^{-1}$ of the \textit{pseudo-inverse} $L^{-1}$ of $L$ is the class of mean zero elements $F$ of $L^2(\Prob)$. If $F= \sum_{p=1}^\infty I_p(f_p)$ is the chaotic decomposition of $F$, then $L^{-1} $F is given by
\begin{equation*}
 L^{-1} F= - \sum_{p=1}^\infty\frac{1}{p}I_p(f)\,.
\end{equation*}
Note that these definitions imply that $L^{-1}F\in {\rm dom }\, L$ (and therefore $L^{-1}F\in {\rm dom}\, D$), for every $F\in {\rm dom}\, L^{-1}$, and moreover
\begin{eqnarray*}
&& LL^{-1}F=F\quad\text{for all }F\in \dom L^{-1}\quad\text{and}\\ && L^{-1} LF=F-\E[F]\quad\text{for all } F\in\dom L\,.
\end{eqnarray*}
Using the first of these identities as well as \eqref{intparts} we obtain that, {{} for $F,G$ such that $G, \, G\, L^{-1}(F-\E(F)) \in \dom L$,}
\begin{align}\label{cov}
 \Cov(F,G)&=\E\bigl[G\bigl(F-\E[F]\bigr)\bigr]=\E\bigl[G\cdot LL^{-1}\bigl(F-\E[F]\bigr)\bigr]\notag\\
& =-\E\bigl[\Gamma\bigl(G,L^{-1}\bigl(F-\E[F]\bigr)\bigr]\,.
\end{align}
In particular, if $F=I_q(f)$ is a multiple integral of order $q\geq1$ such that $F^2\in \dom\, L$, then $\E[F]=0$, $L^{-1}F= - q^{-1}F$ and 
\begin{align}\label{varIq}
 \Var(F)=\frac{1}{q}\E\bigl[\Gamma(F,F)\bigr]\,.
\end{align}
{{} Note that Lemma \ref{chaosorder} immediately implies that $F^2 = I_q(f)^2 \in \dom L$ if and only if $F\in L^4(\Prob)$. On the other hand, if $G\in {\rm dom}\, D$ and $G D^+(L^{-1} F)$, $D^+(L^{-1} F)\in L^1(\Prob\otimes \mu)$, then combining (in order) \eqref{formL}, \eqref{e:ibpl1} and \eqref{altDz} yields
\begin{equation}\label{e:cov2}
\Cov(F,G)=\E\bigl[G\cdot LL^{-1}\bigl(F-\E[F]\bigr)\bigr] =- \E[\Gamma_0(G, L^{-1}\bigl(F-\E[F]\bigr))].
\end{equation}

 }

\end{enumerate}

\subsection{Combining $L^1$ and $L^2$ techniques} The following result provides an explicit representation of the carr\'{e}-du-champ operator $\Gamma$ in terms of $\Gamma_0$, as introduced in \eqref{e:Gamma0}. Although such a characterization follows quite straightforwardly from the (classical) results and definitions provided above, we were not able to locate it in the existing literature { (at least not at our level of generality --- see e.g. \cite[Proposition 4.7]{BDbook} for a similar statement in a more restrictive setting)} and we will therefore provide a full proof. It is one of the staples of our approach.

\begin{prop}\label{cdcform}
 For all $F,G\in\dom L$ such that $FG\in\dom L$ and $$DF,\, DG,\, FDG, \,GDF \in L^1(\Prob\otimes\mu),$$ we have that $DF= D^+ F, \, DG= D^+G$, in such a way that $DF\, DG = D^+F\, D^+G  \in L^1(\Prob\otimes\mu)$, and
 \begin{equation}\label{e:identityGamma}
  \Gamma(F,G)=\Gamma_0(F,G),
 \end{equation}
where $\Gamma_0$ is defined in \eqref{e:Gamma0}.
\end{prop}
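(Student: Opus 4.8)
The plan is to reduce everything to the first-order formula \eqref{formL}, which expresses $L$ as the difference of a $\mu$-integral of the add-one cost and an $\eta$-integral of the remove-one cost, and then to exploit the elementary Leibniz-type rules satisfied by $D^+$ and $D^-$. The whole statement then follows by substituting three instances of \eqref{formL} into the definition \eqref{cdc} of $\Gamma$.

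First I would record the two pointwise product rules. Writing $F=\mfk(\eta)$ and $G=\mgk(\eta)$ with representatives $\mfk,\mgk$, so that $\mfk(\eta+\delta_z)=F+D^+_zF$, a direct expansion of $(F+D^+_zF)(G+D^+_zG)-FG$ gives
\begin{equation*}
D^+_z(FG)=F\,D^+_zG+G\,D^+_zF+D^+_zF\,D^+_zG,
\end{equation*}
and, on $\supp\eta$ (where $\mfk(\eta-\delta_z)=F-D^-_zF$), the analogous computation yields
\begin{equation*}
D^-_z(FG)=F\,D^-_zG+G\,D^-_zF-D^-_zF\,D^-_zG.
\end{equation*}
These are purely algebraic and require no integrability.

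Next I would settle the integrability claims. Since $F,G\in\dom L\subseteq\dom D$, the characterization preceding \eqref{altDz} gives $DF=D^+F$, $DG=D^+G$ together with $D^+F,D^+G\in L^2(\Prob\otimes\mu)$; Cauchy--Schwarz then yields $D^+F\,D^+G\in L^1(\Prob\otimes\mu)$. This is the one genuinely non-formal input, and it closes the only gap in the argument. Combined with the standing hypotheses $FD^+G,\,GD^+F\in L^1(\Prob\otimes\mu)$ and the first product rule, it follows that $D^+(FG)\in L^1(\Prob\otimes\mu)$, so that \eqref{formL} is legitimately applicable to each of $F$, $G$ and $FG$. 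The associated $\eta$-integrals, namely those of $D^-F\,D^-G$, of $FD^-G$ and of $GD^-F$, are finite and integrable by the Mecke formula of Lemma \ref{l:mecke}, which transports each of them to the already-controlled $\mu$-integrals of the corresponding add-one expressions.

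Finally I would substitute the three instances of \eqref{formL} into $2\Gamma(F,G)=L(FG)-F\,LG-G\,LF$. Because $F$ and $G$ are constant with respect to the variable $z$ and all the relevant integrals converge absolutely, I may pull $F$ and $G$ inside the integrals and regroup the $\mu$-part and the $\eta$-part separately. The first product rule collapses the $\mu$-integrand to $D^+_zF\,D^+_zG$, while the second collapses the $\eta$-integrand to $D^-_zF\,D^-_zG$, producing exactly $2\Gamma_0(F,G)$ as defined in \eqref{e:Gamma0}. The main, and essentially only, obstacle is the integrability bookkeeping required to split each integral into individually convergent pieces; once $D^+F\,D^+G\in L^1(\Prob\otimes\mu)$ is secured by Cauchy--Schwarz and the $\eta$-integrals are tamed by the Mecke formula, the identity $\Gamma(F,G)=\Gamma_0(F,G)$ is immediate.
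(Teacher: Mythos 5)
Your proposal is correct and follows essentially the same route as the paper: both apply \eqref{formL} to $F$, $G$ and $FG$ (after checking $D^+(FG)\in L^1(\Prob\otimes\mu)$ via the product rule, Cauchy--Schwarz for $D^+F\,D^+G$, and the standing hypotheses) and then collapse the $\mu$- and $\eta$-integrands using the Leibniz rules \eqref{e:mix+}--\eqref{e:mix-}. Your treatment of the integrability bookkeeping, including the Mecke transfer for the $\eta$-integrals, is simply a more explicit version of what the paper dismisses as ``elementary considerations.''
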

In order to prove Proposition \ref{cdcform}, we state the following lemma which will be exploited in several occasions.

\begin{lemma}\label{Difflemma}
\begin{enumerate}[{\normalfont (a)}]
 \item For $F\in \mathcal{L}^0(\Omega)$ and $z\in\mathcal{Z}$ we have the identities
 \begin{align}
  D^+_zF^2&=\bigl(D^+_zF\bigr)^2+2F D^+_zF\label{dp2}\\
  D^+_zF^3&=\bigl(D^+_zF\bigr)^3+3F^2 D^+_zF+3F\bigl(D^+_zF\bigr)^2\label{dp3}\\
  D^-_zF^2&=-\bigl(D^-_zF\bigr)^2+2F D_z^- F\label{dm2}\\
  D^-_zF^3&=\bigl(D^-_zF\bigr)^3+3F^2D^-_zF-3F\bigl(D^-_z F\bigr)^2\label{dm3}\,.
\end{align}
\item Let $\psi\in C^1(\R)$ be such that $\psi'$ is Lipschitz with minimum Lipschitz-constant $\fnorm{\psi''}$. Then, for $F\in  \mathcal{L}^0(\Omega)$ and $z\in\mathcal{Z}$, there are random quantities 
$R_\psi^+(F,z)$ and $R_\psi^-(F,z)$ such that 
\begin{equation*}
 \babs{R_\psi^+(F,z)}\leq\frac{\fnorm{\psi''}}{2}\,,\quad \babs{R_\psi^-(F,z)}\leq\frac{\fnorm{\psi''}}{2}
\end{equation*}
and 
\begin{align*}
 D_z^+\psi(F)&=\psi'(F)D_z^+F +R_\psi^+(F,z)\bigl(D^+_zF\bigr)^2\quad\text{and}\\
 D_z^-\psi(F)&=\psi'(F)D_z^-F +R_\psi^-(F,z)\bigl(D^-_zF\bigr)^2\,.
\end{align*}
\end{enumerate}
\end{lemma}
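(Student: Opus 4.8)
The plan is to prove the two parts separately, as part (a) is purely algebraic and part (b) follows from a Taylor-type estimate. For part (a), the key observation is that the add-one cost operator $D_z^+$ acts on a functional $F=\mfk(\eta)$ by replacing $\eta$ with $\eta+\delta_z$, so that for any polynomial $p$ one has $D_z^+ p(F) = p(\mfk(\eta+\delta_z)) - p(\mfk(\eta))$. Setting $a:=\mfk(\eta)=F$ and $b:=\mfk(\eta+\delta_z)$, so that $D_z^+F = b-a$, the identities reduce to elementary algebraic manipulations: for \eqref{dp2} I would write $b^2-a^2 = (b-a)^2 + 2a(b-a)$, and for \eqref{dp3} I would expand $b^3-a^3 = (b-a)^3 + 3a^2(b-a) + 3a(b-a)^2$. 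These are verified by direct expansion. For the remove-one versions \eqref{dm2} and \eqref{dm3}, the relevant substitution is $D_z^-F = \mfk(\eta)-\mfk(\eta-\delta_z)$; writing $c:=\mfk(\eta-\delta_z)$ and $a:=\mfk(\eta)=F$, so that $D_z^-F = a-c$, the same binomial identities apply but with the sign bookkeeping reversed, which accounts for the minus sign in front of $(D_z^-F)^2$ in \eqref{dm2} and the minus sign on the last term of \eqref{dm3}. (On the support of $\eta$ one uses the definition directly; off the support $D_z^-F=0$ and both sides vanish trivially, so the identities hold everywhere.)

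For part (b), I would again introduce the two evaluation points and reduce the statement to a one-variable Taylor estimate. Writing $a:=F=\mfk(\eta)$ and $b:=\mfk(\eta+\delta_z)$, we have $D_z^+\psi(F) = \psi(b)-\psi(a)$ and $D_z^+F = b-a$. Since $\psi\in C^1(\R)$ with $\psi'$ Lipschitz, Taylor's theorem with the integral form of the remainder gives
\begin{equation*}
\psi(b)-\psi(a) = \psi'(a)(b-a) + \int_0^1 \bigl(\psi'(a+t(b-a)) - \psi'(a)\bigr)(b-a)\,dt.
\end{equation*}
Defining $R_\psi^+(F,z) := (b-a)^{-2}\int_0^1 (\psi'(a+t(b-a))-\psi'(a))(b-a)\,dt$ when $b\neq a$ (and $R_\psi^+(F,z):=0$ otherwise), the claimed decomposition $D_z^+\psi(F) = \psi'(F)D_z^+F + R_\psi^+(F,z)(D_z^+F)^2$ holds identically. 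The bound $|R_\psi^+(F,z)|\leq \fnorm{\psi''}/2$ then follows because $|\psi'(a+t(b-a))-\psi'(a)|\leq \fnorm{\psi''}\,t\,|b-a|$ by the Lipschitz property of $\psi'$, so the integrand is bounded by $\fnorm{\psi''}\,t\,(b-a)^2$ and integrating $t$ over $[0,1]$ yields the factor $1/2$. The remove-one version is entirely analogous, using $a:=F=\mfk(\eta)$ and $c:=\mfk(\eta-\delta_z)$ with $D_z^-F = a-c$, and the same Taylor expansion of $\psi(a)-\psi(c)$ about $c$; off the support of $\eta$ both terms vanish and one simply sets $R_\psi^-(F,z):=0$.

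I do not anticipate a genuine obstacle here, since the whole lemma is really a pointwise (in $\om$ and $z$) statement about the one-variable function $\psi$ evaluated at the two real numbers $\mfk(\eta\pm\delta_z)$ and $\mfk(\eta)$; no measure theory, no Mecke formula, and no integrability assumptions enter. The only point demanding a little care is the definition of the remainder term $R_\psi^\pm(F,z)$ on the exceptional set where $D_z^\pm F = 0$ (equivalently $b=a$ or $a=c$): there the quadratic term $(D_z^\pm F)^2$ vanishes, so the value of $R_\psi^\pm$ is irrelevant and may be fixed to be $0$, keeping the stated uniform bound. I would therefore present part (a) as a one-line algebraic check for each of the four identities, and part (b) as the Taylor estimate above together with this bookkeeping for the remainder.
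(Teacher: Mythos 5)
Your proposal is correct and follows essentially the same route as the paper: part (a) is the same binomial/algebraic identity applied to $a=\mfk(\eta)$ and $b=\mfk(\eta\pm\delta_z)$, and part (b) is the same Taylor expansion with quadratic remainder, expanded at the point $F=\mfk(\eta)$ (the paper states the remainder bound abstractly, while you give it in integral form, and you additionally spell out the off-support case for $D^-$, which the paper leaves as ``very similar''). One small wording caveat: for the remove-one version of (b) you should expand $\psi$ about $a=F$ (not about $c=\mfk(\eta-\delta_z)$), since the claimed identity features $\psi'(F)$ and expanding about $c$ would produce $\psi'(c)$ and cost an extra factor in the remainder bound.
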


\begin{proof}
The proof of this result is deferred to Section \ref{proofs}. \\
\end{proof}

\begin{remark}\label{Drem}
 Note that, by virtue of \eqref{dp2} and polarization, for $F,G\in \mathcal{L}^0(\Omega)$ and $z\in\calZ$ we also deduce the product rules:
  \begin{eqnarray}\label{e:mix+}
   D^+_z\bigl(FG\bigr)&=& G D^+_z F+F D^+_z G+ \bigl(D^+_zF \bigr)\bigl(D^+_zG\bigr)\\
   D^-_z\bigl(FG\bigr)&=& G D_z^- F+F D_z^- G - \bigl(D^-_zF \bigr)\bigl(D_z^-G\bigr)\label{e:mix-}
  \end{eqnarray}
If, furthermore, $F,G,FG\in\dom D$, then, from \eqref{altDz} we conclude that
\begin{equation}\label{prodform}
 D_z(FG)=GD_zF+FD_zG+(D_zF)(D_zG)\,,\quad z\in\calZ\,,
\end{equation}
for the Malliavin derivative $D$. {{} Relations \eqref{e:mix+}--\eqref{e:mix-} combined with \eqref{e:identityGamma} imply that $\Gamma$ is not a derivation, and confirm the well-known fact that $L$ is not a diffusion operator (see e.g. \cite[Definition 1.11.1]{BGL14} for definitions).}
\end{remark}

\begin{proof}[Proof of Proposition \ref{cdcform}]
We need only prove \eqref{e:identityGamma} --- as the rest of the assertions in the statement follows from elementary considerations. Since our assumptions imply that $D(FG) \in L^1(\Prob\otimes\mu)$, we can apply \eqref{formL} in order to deduce that
\begin{align}
2\Gamma(F,G)=LFG-GLF-FLG &=\int_\calZ D_z^+(FG)\mu(dz)-\int_\calZ D_z^-(FG)\eta(dz)\notag\\
&\;-G \int_\calZ D_z^+F \, \mu(dz) + G\int_\calZ D_z^-F\, \eta(dz) \notag \\
&\;-F \int_\calZ D_z^+G\, \mu(dz) + F\int_\calZ D_z^-G\, \eta(dz).\notag
\end{align}
Using \eqref{e:mix+} and \eqref{e:mix-} yields immediately the desired formula. 
\end{proof}

\section{Identities and estimates for multiple integrals}\label{s:mwi}

We will now prove several important relations involving multiple stochastic integrals of a fixed order $q\geq 1$. They constitute the backbone of the forthcoming proof of Theorem \ref{4mt}. 

\begin{lemma}\label{vargamma}
 Let $q\geq 1$, and consider a random variable $F$ such that $F = I_q(f)\in C_q = {\rm Ker}(L+qI)$ and $\E[F^4]<\infty$. Then, $F, F^2\in {\rm dom}\, L$, and
\begin{eqnarray}
\notag  \Var\bigl(q^{-1}\Gamma(F,F)\bigr)&=&\sum_{p=1}^{2q-1} \Bigl(1-\frac{p}{2q}\Bigr)^2 \Var\bigl(\proj{F^2}{p}\bigr)\\ &\leq& \frac{(2q-1)^2}{4q^2}\bigl(\E\bigl[F^4\bigr]-3 \E[F^2]^2 \bigr)\,. \label{e:cb1}
 \end{eqnarray}
 Moreover, one has also that
 \begin{eqnarray}
 && \label{e:cb2} \frac{1}{q^2} \E[\Gamma(F,F)^2]\leq \E[F^4]\\ 
&& \frac{1}{q} \E[F^2\Gamma(F,F)]\leq \E[F^4] \label{e:cb3}
\end{eqnarray}
\end{lemma}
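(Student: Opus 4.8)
The plan is to reduce all three estimates to a single explicit chaotic expansion of $q^{-1}\Gamma(F,F)$ and then to exploit the orthogonality of Wiener chaoses. First I would record that, since $\E[F^4]<\infty$, Lemma \ref{chaosorder}(a) furnishes the \emph{finite} decomposition $F^2=\sum_{r=0}^{2q}\proj{F^2}{r}$; this immediately gives $F^2\in\dom L$ (a finite sum of eigenspaces), while $F=I_q(f)\in C_q\subset\dom L$ is automatic. Combining the definition \eqref{cdc} with $LF=-qF$ (valid because $F\in C_q=\Ker(L+qI)$) and with \eqref{defL} applied to $F^2$, I obtain the master identity
\begin{equation*}
q^{-1}\Gamma(F,F)=\sum_{r=0}^{2q}\Bigl(1-\frac{r}{2q}\Bigr)\proj{F^2}{r}.
\end{equation*}
Everything else is extracted from this one formula together with the facts that the coefficients satisfy $0\le 1-\tfrac{r}{2q}\le 1$ and that distinct projections are orthogonal in $L^2(\Prob)$.

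For the equality in \eqref{e:cb1} I would observe that the coefficient vanishes at $r=2q$ and that the summand at $r=0$ equals the constant $\E[F^2]$; hence only the indices $r=1,\dots,2q-1$ contribute to the variance, and orthogonality yields $\Var(q^{-1}\Gamma(F,F))=\sum_{r=1}^{2q-1}(1-\tfrac{r}{2q})^2\Var(\proj{F^2}{r})$ directly. For the inequality I would bound each coefficient by its maximal value $(1-\tfrac1{2q})^2=\tfrac{(2q-1)^2}{4q^2}$, attained at $r=1$, and then rewrite the remaining sum as $\sum_{r=1}^{2q-1}\Var(\proj{F^2}{r})=\Var(F^2)-\Var(\proj{F^2}{2q})=\E[F^4]-\E[F^2]^2-\Var(\proj{F^2}{2q})$.

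The crux, and the step I expect to be the main obstacle, is a sharp lower bound on the top-order variance: $\Var(\proj{F^2}{2q})\ge 2\E[F^2]^2$. Inserting it turns the previous display into $\E[F^4]-3\E[F^2]^2$ and finishes \eqref{e:cb1} (incidentally proving $\E[F^4]\ge 3\E[F^2]^2$). By Lemma \ref{chaosorder}(b), $\proj{F^2}{2q}=I_{2q}(f\tilde{\otimes}f)$, so, since $\E[F^2]=q!\Enorm{f}^2$, the claim is the purely analytic inequality $(2q)!\Enorm{f\tilde{\otimes}f}^2\ge 2(q!)^2\Enorm{f}^4$. I would prove this \emph{without} any product or contraction formula, which is essential because, as flagged in Remark \ref{corem}, the contraction kernels need not be square-integrable. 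Writing $f\tilde{\otimes}f=\frac1N\sum_{\pi}f_Af_{A^c}$ as the average over the $N=\tfrac12\binom{2q}{q}$ unordered equipartitions $\pi=\{A,A^c\}$ of $\{1,\dots,2q\}$ (where $f_A$ denotes $f$ evaluated on the variables indexed by $A$), and using that symmetrization is a self-adjoint projection on $L^2(\mu^{2q})$, so that $\Enorm{f\tilde{\otimes}f}^2=\langle f\tilde{\otimes}f,\,f\otimes f\rangle_2=\frac1N\sum_\pi\langle f_Af_{A^c},\,f\otimes f\rangle_2$. The key computation is that each such inner product, after integrating out the variables shared between the two blocks, factorizes into a genuine square $\Enorm{f\star_r^r f}^2\ge 0$, where $r$ is the overlap of $A$ with the reference block; every term is finite because $f\otimes f\in L^2(\mu^{2q})$. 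Keeping only the diagonal term (the reference partition, which contributes exactly $\Enorm{f}^4$) gives $\Enorm{f\tilde{\otimes}f}^2\ge\frac1N\Enorm{f}^4=\frac{2(q!)^2}{(2q)!}\Enorm{f}^4$, which is precisely the required bound.

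Finally, the two estimates \eqref{e:cb2} and \eqref{e:cb3} fall out of the same master identity by orthogonality and the bound $0\le 1-\tfrac{r}{2q}\le 1$. For \eqref{e:cb2}, Parseval gives $\frac1{q^2}\E[\Gamma(F,F)^2]=\sum_{r=0}^{2q}(1-\tfrac{r}{2q})^2\E[\proj{F^2}{r}^2]\le\sum_{r=0}^{2q}\E[\proj{F^2}{r}^2]=\E[(F^2)^2]=\E[F^4]$. For \eqref{e:cb3}, expanding $F^2=\sum_r\proj{F^2}{r}$ and invoking orthogonality once more yields $\frac1q\E[F^2\Gamma(F,F)]=\sum_{r=0}^{2q}(1-\tfrac{r}{2q})\E[\proj{F^2}{r}^2]\le\E[F^4]$. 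Thus the only genuinely delicate ingredient in the whole lemma is the combinatorial lower bound for $\Enorm{f\tilde{\otimes}f}^2$ discussed above.
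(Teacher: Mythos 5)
Your proof is correct, and its overall architecture coincides with the paper's: both rest on the finite chaotic decomposition of $F^2$ from Lemma \ref{chaosorder}, the identity $q^{-1}\Gamma(F,F)=\sum_{p=0}^{2q}\bigl(1-\tfrac{p}{2q}\bigr)\proj{F^2}{p}$ (the paper's \eqref{vg1}), and orthogonality of the chaoses; the derivations of \eqref{e:cb2} and \eqref{e:cb3} are the same one-line consequences in both treatments. The one genuine divergence is how the crucial lower bound $(2q)!\Enorm{f\tilde{\otimes}f}^2\geq 2(q!)^2\Enorm{f}^4$ is obtained. The paper simply invokes identity (5.2.12) of \cite{NouPecbook}, writing $(2q)!\Enorm{f\tilde{\otimes}f}^2=2(q!)^2\Enorm{f}^4+D_q$ with $D_q\geq 0$, whereas you reprove this from scratch: you expand $f\tilde{\otimes}f$ as an average over the $\tfrac12\binom{2q}{q}$ equipartitions, pair against $f\otimes f$, and check that each cross term factorizes (after Fubini, justified since the quadruple product is in $L^1$ by Cauchy--Schwarz) into $\int h^2$ with $h=f\star_r^r f$, hence is nonnegative; keeping only the diagonal term gives the bound. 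This is exactly the mechanism underlying the cited identity, so the two arguments are mathematically equivalent, but yours has the merit of being self-contained and of making visible that only the ``pure'' contractions $f\star_r^r f$ --- which are automatically in $L^2(\mu^{2q-2r})$ by Cauchy--Schwarz --- ever enter, so no integrability issue of the kind flagged in Remark \ref{corem} arises. The price is a page of combinatorics that the paper outsources to a textbook reference. Either route is acceptable.
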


\begin{proof} From Lemma \ref{chaosorder}, we know that $F^2=I_q(f)^2$ has a chaos decomposition of the form 
\begin{equation}\label{cdFsq}
 F^2=\sum_{p=0}^{2q}\proj{F^2}{p}=\E[F^2]+\sum_{p=1}^{2q-1}\proj{F^2}{p}+ { I_{2q}(g_{2q})}
\end{equation}
with  $g_{2q}=f\tilde{\otimes}f$, thus ensuring that $F^2$ is in the domain of $L$. By homogeneity, without loss of generality we can assume for the rest of the proof that $\E[F^2]=1$. As $LF=-qF$, by the definitions of $\Gamma$ and $L$ we have 
\begin{align}\label{vg1}
 2\Gamma(F,F)&= LF^2-2FLF=\sum_{p=1}^{2q} -p \proj{F^2}{p} +2q \sum_{p=0}^{2q} \proj{F^2}{p}\notag\\
&=\sum_{p=0}^{2q} (2q-p)\proj{F^2}{p}\,.
\end{align}
By orthogonality, one has that
\begin{align*}
 \Var\bigl(q^{-1}\Gamma(F,F)\bigr)&=\frac{1}{4q^2}\sum_{p=1}^{2q}(2q-p)^2\Var\bigl(\proj{F^2}{p}\bigr)\\
 &=\frac{1}{4q^2}\sum_{p=1}^{2q-1}(2q-p)^2\Var\bigl(\proj{F^2}{p}\bigr),
\end{align*}
proving the first equality in \eqref{e:cb1}. For the inequality, first note that from \eqref{cdFsq} and the isometry property of multiple integrals we have 
\begin{align}\label{vg2}
 \E\bigl[F^4\bigr]-1&=\Var\bigl(F^2\bigr)=\sum_{p=1}^{2q}\Var\bigl(\proj{F^2}{p}\bigr)\notag\\
 &=\sum_{p=1}^{2q-1}\Var\bigl(\proj{F^2}{p}\bigr) +(2q)!\Enorm{f\tilde{\otimes} f}^2\,.
\end{align}
Now, identity (5.2.12) in the book \cite{NouPecbook} yields that 
\begin{equation}\label{vg3}
 (2q!)\Enorm{f\tilde{\otimes} f}^2=2(q!)^2\Enorm{f}^4 +D_q\,,
\end{equation}
where $D_q\geq0$ is a finite non-negative quantity that can be expressed in terms of the contraction kernels associated with $F$, and whose explicit form is immaterial for the present proof. Also, 
\begin{equation*}
 2(q!)^2\Enorm{f}^4=2\Bigl(\E\bigl[F^2\bigr]\Bigr)^2=2,
\end{equation*}
and we deduce from \eqref{vg2} and \eqref{vg3} that 
\begin{align*}
 \frac{(2q-1)^2}{4q^2}\Bigl(\E\bigl[F^4\bigr]-3\Bigr)&=\frac{(2q-1)^2}{4q^2}\sum_{p=1}^{2q-1}\Var\bigl(\proj{F^2}{p}\bigr) +\frac{(2q-1)^2}{4q^2}D_q\\
 &\geq \frac{(2q-1)^2}{4q^2}\sum_{p=1}^{2q-1}\Var\bigl(\proj{F^2}{p}\bigr)\\
 &\geq \frac{1}{4q^2}\sum_{p=1}^{2q-1}(2q-p)^2\Var\bigl(\proj{F^2}{p}\bigr)\\
 &=\Var\bigl(q^{-1}\Gamma(F,F)\bigr)\,,
\end{align*}
which is exactly the second estimate in \eqref{e:cb1}. Relations \eqref{e:cb2} and \eqref{e:cb3} are immediate consequences of \eqref{cdFsq} and \eqref{vg1}.
\end{proof}

The following result will allow us to effectively control residual quantities arising from the applicaton of Stein's method on the Poisson space.

\begin{lemma}\label{remlemma}
Let $q\geq 1$ be an integer and let $F\in L^4(\Prob)$ be an element of the $q$-th Wiener chaos $C_q$, such that $F$ verifies {\bf Assumption A}. Then,
\begin{equation*}
\frac{1}{2q}\int_\calZ\E\bigl[\abs{D_z^+F}^4\bigr]\mu(dz)=\frac{3}{q}\E\bigl[F^2\Gamma(F,F)\bigr]-\E\bigl[F^4\bigr]\leq \frac{4q-3}{2q}\Bigl(\E\bigl[F^4\bigr]-3\E[F^2]^2\Bigr).
\end{equation*}
\end{lemma}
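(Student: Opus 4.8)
The plan is to prove the two assertions separately: the middle \emph{equality} by means of the Mecke formula, and the final \emph{inequality} through the chaotic decomposition of $F^2$ already exploited in Lemma \ref{vargamma}. Abbreviating $u:=D_z^+F$, set
\[
A:=\E\Bigl[\int_\calZ F^2u^2\,\mu(dz)\Bigr],\quad B:=\E\Bigl[\int_\calZ Fu^3\,\mu(dz)\Bigr],\quad C:=\E\Bigl[\int_\calZ u^4\,\mu(dz)\Bigr].
\]
Since $C=\int_\calZ\E[|D_z^+F|^4]\,\mu(dz)$, it suffices to establish the pair of identities $\E[F^2\Gamma(F,F)]=A+B+\tfrac12C$ and $q\E[F^4]=3A+3B+C$, which together give $\frac{1}{2q}C=\frac{3}{q}\E[F^2\Gamma(F,F)]-\E[F^4]$.

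First I would settle finiteness. Because $F\in C_q\cap L^4(\Prob)$, Lemma \ref{chaosorder} yields $F^2\in\dom L$, hence $\Gamma(F,F)\in L^2(\Prob)$ and $\E[F^2\Gamma(F,F)]<\infty$; moreover Proposition \ref{cdcform} applies under {\bf Assumption A} and gives $\Gamma(F,F)=\Gamma_0(F,F)$, with $\Gamma_0$ as in \eqref{e:Gamma0}. The delicate point---and the main obstacle---is that {\bf Assumption A} controls only $D^+F$, $FD^+F$, $(D^+F)^4$ and $F^3D^+F$ in $L^1(\Prob\otimes\mu)$, whereas the computation below produces the mixed quantities $A$ and $B$. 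These I would bound by H\"older interpolation in $\Prob\otimes\mu$: from the pointwise identities $|F|\,|u|^3=|F^3u|^{1/3}(u^4)^{2/3}$ and $F^2u^2=|F^3u|^{2/3}(u^4)^{1/3}$, H\"older with conjugate exponents $(3,3/2)$ gives $|B|\le\bigl(\int_\calZ\E[|F^3D_z^+F|]\,\mu(dz)\bigr)^{1/3}C^{2/3}<\infty$ and $A\le\bigl(\int_\calZ\E[|F^3D_z^+F|]\,\mu(dz)\bigr)^{2/3}C^{1/3}<\infty$. This is exactly what legitimizes the Mecke manipulations under the minimal {\bf Assumption A}.

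For the identity I would evaluate both sides via \eqref{e:Gamma0} and the Mecke formula \eqref{mecke2} (Lemma \ref{l:mecke}), under which $D_z^-F\mapsto u$ and $F\mapsto F+u$. Multiplying $\Gamma_0(F,F)=\tfrac12\bigl(\int_\calZ u^2\,\mu(dz)+\int_\calZ(D_z^-F)^2\,\eta(dz)\bigr)$ by $F^2$ and transforming the $\eta$-integral turns $\E[\int_\calZ F^2(D_z^-F)^2\,\eta(dz)]$ into $\E[\int_\calZ(F+u)^2u^2\,\mu(dz)]=A+2B+C$, so that $\E[F^2\Gamma(F,F)]=\tfrac12\bigl(A+(A+2B+C)\bigr)=A+B+\tfrac12C$. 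For the fourth moment I would use the generator representation \eqref{formL} together with $LF=-qF$, i.e.\ $qF=\int_\calZ D_z^-F\,\eta(dz)-\int_\calZ u\,\mu(dz)$, whence $q\E[F^4]=\E[\int_\calZ F^3D_z^-F\,\eta(dz)]-\E[\int_\calZ F^3u\,\mu(dz)]$; a further application of Lemma \ref{l:mecke} rewrites the first expectation as $\E[\int_\calZ(F+u)^3u\,\mu(dz)]$, whose binomial expansion contains the term $\E[\int_\calZ F^3u\,\mu(dz)]$ cancelling the second, leaving $q\E[F^4]=3A+3B+C$. Combining the two displays gives the middle equality.

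For the final bound I would pass to the chaotic decomposition. By \eqref{vg1}, $2\Gamma(F,F)=\sum_{p=0}^{2q}(2q-p)\proj{F^2}{p}$; testing this against $F^2=\sum_{p=0}^{2q}\proj{F^2}{p}$ and using orthogonality gives
\[
\E[F^2\Gamma(F,F)]=q\,\E[F^2]^2+\tfrac12\sum_{p=1}^{2q-1}(2q-p)\Var\bigl(\proj{F^2}{p}\bigr).
\]
Since $\proj{F^2}{2q}=I_{2q}(f\tilde{\otimes}f)$, identity \eqref{vg3} yields $\Var(\proj{F^2}{2q})=(2q)!\Enorm{f\tilde{\otimes}f}^2=2(q!)^2\Enorm{f}^4+D_q=2\E[F^2]^2+D_q$ with $D_q\ge0$; substituting into $\E[F^4]=\E[F^2]^2+\sum_{p=1}^{2q}\Var(\proj{F^2}{p})$ one finds
\[
\frac{3}{q}\E[F^2\Gamma(F,F)]-\E[F^4]=-D_q+\frac{1}{2q}\sum_{p=1}^{2q-1}(4q-3p)\,\Var\bigl(\proj{F^2}{p}\bigr).
\]
Because $4q-3p\le4q-3$ for every $p\ge1$ and all variances are nonnegative, the sum is at most $\frac{4q-3}{2q}\sum_{p=1}^{2q-1}\Var(\proj{F^2}{p})=\frac{4q-3}{2q}\bigl(\E[F^4]-3\E[F^2]^2-D_q\bigr)$; discarding the remaining nonpositive multiples of $D_q$ then produces the asserted bound $\frac{4q-3}{2q}\bigl(\E[F^4]-3\E[F^2]^2\bigr)$, whose nonnegativity is guaranteed by Lemma \ref{vargamma}.
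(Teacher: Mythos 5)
Your proof is correct and follows essentially the same route as the paper: both arguments rest on the Mecke formula combined with the representation $qF=\int_\calZ D_z^-F\,\eta(dz)-\int_\calZ D_z^+F\,\mu(dz)$ coming from $LF=-qF$ and \eqref{formL} for the middle equality, and on the chaotic decomposition of $F^2$ through \eqref{vg1}--\eqref{vg3} for the final inequality. The differences are only in the bookkeeping: the paper channels the Mecke step through the integration-by-parts Lemma \ref{l:ibpl1} applied to $\Gamma_0(F,F^3)$ together with the cubic difference formulas \eqref{dp3}--\eqref{dm3}, and secures integrability via \eqref{e:cb3} plus Cauchy--Schwarz rather than your H\"older interpolation between $F^3D^+F$ and $(D^+F)^4$, while your linear system in $A,B,C$ and the bound $4q-3p\leq 4q-3$ reproduce the same algebra as the paper's estimate $2q-p\leq 2q-1$.
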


\begin{proof} Again by homogeneity, we can assume without loss of generality that $F$ has unit variance. Observe that $F\in {\rm dom}\, D$, and therefore $DF = D^+F$ (up to a $\Prob\otimes\mu$-negligible set), and also, by virtue of Proposition \ref{cdcform}, one has that $\Gamma(F,F) = \Gamma_0(F,F)$, a.s.-$\Prob$. It follows that
$$
\E\left[ F^2 \int_\calZ (D_z^+F)^2 \mu(dz)\right] \leq 2\E\left[ F^2\Gamma_0(F,F) \right]=2\E\left[ F^2\Gamma(F,F) \right] \leq 2q \E[F^4]<\infty,
$$
where we have used \eqref{e:cb3} { as well as the fact that the integral of a non-negative {function} with respect to the 
non-compensated Poisson measure $\eta$ is non-negative.} Moreover, by Cauchy-Schwarz,
$$
\E\biggl[ |F| \int_\calZ |D_z^+F|^3 \mu(dz)\biggr] \leq \E\biggl[ F^2 \int_\calZ (D_z^+F)^2 \mu(dz)\biggr]^{1/2}
\E\biggl[ \int_\calZ (D_z^+F)^4 \mu(dz)\biggr]^{1/2}<\infty,
$$
so that $F^2 (D^+F)^2, \, F (D^+ F)^3\in L^1(\Prob\otimes \mu)$. Since $LF = -qF$ and $DF\in L^1(\Prob\otimes \mu)$, one infers from \eqref{formL} that
$$
F = -\frac1q \left( \int_\calZ (D_z^+F) \mu(dz) -  \int_\calZ (D_z^-F)\eta(dz) \right).
$$
Since the above discussion also implies that $F^3D^+F, \, D^+(F^3) D^+ F \in L^1(\Prob\otimes \mu)$ (via \eqref{dp3}), we can now exploit the integration by parts relation stated in Lemma \ref{l:ibpl1} to deduce that
\begin{equation*}
\E\bigl[F^4\bigr]=-\frac1q\E\left[F^3  \left( \int_\calZ (D_z^+F) \mu(dz) -  \int_\calZ (D_z^-F)\eta(dz) \right) \right]=\frac1q \E\bigl[\Gamma_0(F,F^3)\bigr].
\end{equation*}
Now, using \eqref{dp3} and \eqref{dm3} we obtain
\begin{align*}
\Gamma_0(F,F^3)&=\frac{1}{2}\biggl(\int_\calZ D^+_zF \Bigl((D^+_zF)^3+3F^2D^+_zF +3F(D^+_zF)^2\Bigl)\mu(dz)\\
&\;+ \int_\calZ D^-_zF\Bigl((D^-_zF)^3+3F^2D^-_zF -3F(D^-_zF)^2\Bigl)\eta(dz)\biggr)\\
&=\frac{1}{2}\biggl(\int_\calZ\Bigl((D^+_zF)^4+3F^2(D^+_zF)^2 +3F(D^+_zF)^3\Bigl)\mu(dz)\\
&\;+ \int_\calZ \Bigl((D^-_zF)^4+3F^2(D^-_zF)^2 -3F(D^-_zF)^3\Bigl)\eta(dz)\biggr),
\end{align*}
and we also have 
\begin{align*}
3F^2\Gamma_0(F,F)&= 3F^2\Gamma(F,F)=\frac12\biggl(\int_\calZ 3F^2(D^+_zF)^2\mu(dz)+ \int_\calZ3 F^2(D^-_zF)^2\eta(dz)\biggr) \,.
\end{align*}
Hence, using the Mecke formula \eqref{mecke2} (as well as the content of Remark \ref{r:ovvio}) in the case
$$
V(z) = - \big( \mathfrak{f}(\eta+\delta_z) - \mathfrak{f}(\eta) \big) ^4-3\mathfrak{f}(\eta) \big( \mathfrak{f}(\eta+\delta_z) - \mathfrak{f}(\eta) \big) ^3,
$$
where $\mathfrak{f}$ is some representative of $F$, we can conclude that 
\begin{align*}
&\frac{3}{q}\E\bigl[F^2\Gamma(F,F)\bigr]-\E\bigl[F^4\bigr] =\frac{1}{2q}\E\biggl[\int_\calZ\Bigl(-(D^+_zF)^4-3F(D^+_zF)^3\Bigr)\mu(dz)\\
&\hspace{3cm}+\int_\calZ\Bigl(-(D^-_zF)^4+3F(D^-_zF)^3\Bigr)\eta(dz)\biggr]\\
&=\frac{1}{2q}\E\biggl[-2\int_\calZ (D^+_zF)^4\mu(dz)+3\int_\calZ (D^+_zF)^3\bigl(\mathfrak{f}(\eta+\delta_z)-\mathfrak{f}(\eta)\bigr)\mu(dz)\biggr]\\
&=\frac{1}{2q}\E\biggl[\int_\calZ (D^+_zF)^4\mu(dz)\biggr]\,.
\end{align*}
Finally, using relations \eqref{cdFsq} and \eqref{vg1} from the proof of Lemma \ref{vargamma}, we obtain
\begin{align}\label{pmt1}
&\frac{1}{q}\int_\calZ\E\bigl[\abs{D_z^+F}^4\bigr]\mu(dz) =2\biggl(\frac{3}{q}\E\bigl[F^2\Gamma(F,F)\bigr]-\E\bigl[F^4\bigr]\biggr)\notag\\
&=2\biggl(\frac{3}{q}q\bigl(\E[F^2]\bigr)^2-\E\bigl[F^4\bigr]+\frac{3}{2q}\sum_{p=1}^{2q-1}(2q-p)\Var\bigl(\proj{F^2}{p}\bigr)\biggr)\notag\\
&\leq 2\Bigl(3-\E\bigl[F^4\bigr]\Bigr)+\frac{3(2q-1)}{q}\sum_{p=1}^{2q-1}\Var\bigl(\proj{F^2}{p}\bigr)\notag\\
&\leq 2\Bigl(3-\E\bigl[F^4\bigr]\Bigr)+\frac{3(2q-1)}{q}\Bigl(\E\bigl[F^4\bigr]-3\Bigr)\notag\\
&=\frac{4q-3}{q}\Bigl(\E\bigl[F^4\bigr]-3\Bigr)\,,
\end{align}
where the last inequality is again a consequence of \eqref{vg2} and \eqref{vg3}.\end{proof}

We eventually prove an estimate that will be crucial in order to deal with bounds in the Kolmogorov distance.

\begin{lemma}\label{l:indicator} For some fixed $q\geq 1$, let $F\in {\rm Ker}(L+qI)$ satisfy both {\bf Assumption A} and {\bf Assumption ${\bf A}^{\rm (loc)}$}. Then, 
\begin{equation}\label{e:indicator}
0\leq \frac1q  \sup_{x\in \R} \E\left[ \int_\calZ (D^+_z {\bf 1}_{\{ F>x\}} |D_z^+F| D_z^+ F \,  \mu(dz)\right] \leq 10\sqrt{\E[F^4] - 3\E[F^2]^2} .
\end{equation}
\end{lemma}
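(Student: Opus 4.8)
The plan is to prove the two inequalities separately: the lower bound by a pathwise sign analysis, and the upper bound by combining the covariance identity \eqref{e:cov2} with a Cauchy--Schwarz argument fed by Lemma \ref{remlemma} and Lemma \ref{vargamma}. Throughout write $F_z^+:=\mathfrak{f}(\eta+\delta_z)$, so that $D_z^+F=F_z^+-F$ and $D_z^+\mathbf{1}_{\{F>x\}}=\mathbf{1}_{\{F_z^+>x\}}-\mathbf{1}_{\{F>x\}}$.

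For the lower bound I would first record the pointwise identity
\[
(D_z^+\mathbf{1}_{\{F>x\}})\,|D_z^+F|\,D_z^+F=|D_z^+\mathbf{1}_{\{F>x\}}|\,(D_z^+F)^2 .
\]
This holds because $D_z^+\mathbf{1}_{\{F>x\}}$ and $D_z^+F$ always carry the same sign: if $F_z^+>F$ then passing from $\eta$ to $\eta+\delta_z$ can only raise the indicator, while if $F_z^+<F$ it can only lower it. Since both $|D_z^+\mathbf{1}_{\{F>x\}}|$ and $(D_z^+F)^2$ are nonnegative, the integrand is nonnegative for every $z$ and every realisation, so the integral and its expectation are nonnegative, uniformly in $x$; this is the left-hand inequality.

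For the upper bound I would use the representation above to rewrite the quantity as $\frac1q\,\E\!\int_\calZ|D_z^+\mathbf{1}_{\{F>x\}}|(D_z^+F)^2\,\mu(dz)$ and exploit the localisation built into the indicator factor: $|D_z^+\mathbf{1}_{\{F>x\}}|$ vanishes unless $x$ lies between $F$ and $F_z^+$, in which case $|F-x|\le|D_z^+F|$ and $|F_z^+-x|\le|D_z^+F|$. Setting $X_z:=(D_z^+\mathbf{1}_{\{F>x\}})(D_z^+F)=|D_z^+\mathbf{1}_{\{F>x\}}|\,|D_z^+F|\ge0$, the Mecke formula \eqref{mecke2} together with the covariance identity \eqref{e:cov2} (applied with $G=\mathbf{1}_{\{F>x\}}$ and $-L^{-1}F=F/q$, legitimate under {\bf Assumption A} and {\bf Assumption ${\bf A}^{\rm(loc)}$}) yields the clean first-order evaluation
\[
\frac1q\,\E\!\int_\calZ X_z\,\mu(dz)=\Cov\bigl(F,\mathbf{1}_{\{F>x\}}\bigr)=\E\bigl[F\,\mathbf{1}_{\{F>x\}}\bigr].
\]
I would then reintroduce the extra factor $|D_z^+F|$ by Cauchy--Schwarz with respect to $\E\!\int_\calZ\cdot\,\mu(dz)$, controlling the higher-order pivotal term by $\tfrac1q\int_\calZ\E[(D_z^+F)^4]\,\mu(dz)$, which Lemma \ref{remlemma} bounds by a universal multiple of $\E[F^4]-3\E[F^2]^2$, while the concentration of $q^{-1}\Gamma(F,F)$ around $\E[F^2]$ is quantified by Lemma \ref{vargamma}.

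The main obstacle is closing this last step \emph{sharply}, i.e. producing the bound $10\sqrt{\E[F^4]-3\E[F^2]^2}$ with a constant and a power that do not involve the variance scale $\E[F^2]$. A naive Cauchy--Schwarz splitting $X_z|D_z^+F|=X_z^{1/2}\cdot X_z^{1/2}|D_z^+F|$ invariably reinserts the factor $\E[F\mathbf{1}_{\{F>x\}}]=O(\sqrt{\E[F^2]})$ from the covariance evaluation, and a simple scaling check shows this is genuinely too lossy (it produces expressions of the order $\sqrt{\E[F^2]}\,(\E[F^4]-3\E[F^2]^2)^{1/4}$ rather than $(\E[F^4]-3\E[F^2]^2)^{1/2}$). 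The delicate point is therefore to use the localisation $\{|F-x|\le|D_z^+F|\}$ in conjunction with the fact that $\Var(q^{-1}\Gamma(F,F))$ is itself $O(\E[F^4]-3\E[F^2]^2)$ (Lemma \ref{vargamma}), so that the total squared-jump mass $\frac1q\int_\calZ(D_z^+F)^2\,\mu(dz)$ carried by the pivotal region near the level $x$ contributes only at the fourth-cumulant scale: this is an anti-concentration phenomenon, encoded in the carr\'e-du-champ, that replaces the missing density estimate (note that $F$ need not possess a density, e.g. $F=I_1(\mathbf{1}_A)=\widehat\eta(A)$ is lattice-valued). Pinning down the exact interplay of the covariance identity, Lemma \ref{remlemma} and Lemma \ref{vargamma} that yields the universal constant $10$ is the heart of the argument.
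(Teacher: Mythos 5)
Your treatment of the lower bound is correct and matches the paper: $D_z^+\mathbf{1}_{\{F>x\}}$ and $D_z^+F$ always have the same sign, so the integrand is pointwise nonnegative. The upper bound, however, has a genuine gap, and you essentially acknowledge it yourself: after the (correct) covariance evaluation $\tfrac1q\E\int_\calZ (D_z^+\mathbf{1}_{\{F>x\}})(D_z^+F)\,\mu(dz)=\E[F\mathbf{1}_{\{F>x\}}]$, you observe that reinserting the extra factor $|D_z^+F|$ by Cauchy--Schwarz is too lossy, and you then defer ``the heart of the argument'' to an unspecified anti-concentration mechanism. No such mechanism is needed, and the covariance identity is not the right starting point.

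The idea you are missing is to treat the quantity as a Skorohod integral and to discard the indicator \emph{before} any moment computation. Set $\Phi(a):=a|a|$ and $v(z):=\Phi(D_z^+F)$. By the Mecke formula and \eqref{e:pdelta}, the target expectation equals $\tfrac1q\E\bigl[\mathbf{1}_{\{F>x\}}\,\delta(v)\bigr]$, and since $|\mathbf{1}_{\{F>x\}}|\le 1$, Cauchy--Schwarz gives the bound $\tfrac1q\E[\delta(v)^2]^{1/2}$ \emph{uniformly in $x$} -- the level $x$ plays no further role. One then controls $\E[\delta(v)^2]$ via the isometry-type inequality $\E[\delta(v)^2]\le\E\int_\calZ v(z)^2\mu(dz)+\E\int_\calZ\int_\calZ (D_y^+v(z))^2\mu(dz)\mu(dy)$ (formula (56) in \cite{Lastsv}), using the pointwise estimate $[D^+_{z_2}\Phi(D^+_{z_1}F)]^2\le 8(D^+_{z_1}F)^2(D^+_{z_2}D^+_{z_1}F)^2+2(D^+_{z_2}D^+_{z_1}F)^4$. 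The resulting three terms are exactly where Lemma \ref{remlemma} and Lemma \ref{vargamma} enter -- but applied not only to $F$ itself, also to $D_{z_1}^+F\in C_{q-1}$ for $\mu$-a.e.\ $z_1$; this is precisely what \textbf{Assumption ${\bf A}^{\rm(loc)}$} is for, a point absent from your proposal. Each term is then of order $q^2(\E[F^4]-3\E[F^2]^2)$, the constants summing to $4q+64q(q-1)+32q(q-1)\le 100q^2$, whence the factor $10$ after taking the square root and dividing by $q$. Without the passage through $\delta(v)$ and the second-order add-one-cost estimates, your argument cannot be closed.
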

\begin{proof}
One checks immediately that $D^+_z {\bf 1}_{\{ F>x\}} D_z^+F \geq 0$, so that we need only prove the second inequality in the statement; also, without loss of generality and by homogeneity, we can once again assume that $F$ has unit variance. According to \eqref{e:D}--\eqref{altDz}, we can choose a version of $D^+F$ such that, for $\mu$--almost every $z\in \calZ$, the random variable $D_z^+F = D_zF$ is an element of the $(q-1)$th Wiener chaos $C_{q-1}$. Applying Lemma \ref{vargamma} and Lemma \ref{remlemma} to every $D_zF$ such that $z$ lies outside the exceptional set, one therefore infers that
\begin{eqnarray}\label{e:bozo1}
 A&:=& \int_\calZ \E\left[ \int_\calZ \bigl(D_{z_2} D_{z_1} F\bigr)^4 \mu(dz_2)\right] \mu(dz_1)\notag\\
 &=&  \int_\calZ \E\left[ \int_\calZ \bigl(D^+_{z_2} (D^+_{z_1} F)\bigr)^4 \mu(dz_2)\right] \mu(dz_1) \notag\\
&\leq&  4(q-1) \E\left[\int_{\calZ} (D_z^+F)^4 \mu(dz)\right] \leq 16q(q-1)\Bigl(\E\bigl[F^4\bigr]-3 \Bigr),
\end{eqnarray}
{ where we have applied twice (first to $D_{z_1}^+F$ and then to $F$ itself) the following inequality which is immediate from Lemma \ref{remlemma}:
\begin{equation}\label{e:bozo3}
C:= \int_\calZ\E\bigl[\abs{D_z^+F}^4\bigr]\mu(dz)\leq 4q \Bigl(\E\bigl[F^4\bigr]-3 \Bigr).
\end{equation}

Moreover, using the definition \eqref{e:Gamma0} of $\Gamma_0$, as well as \eqref{e:cb3} and again \eqref{e:bozo3} we obtain}
\begin{eqnarray}B&:=& \int_\calZ \E\left[ (D_{z_1} F)^2 \int_\calZ (D_{z_2} D_{z_1} F)^2 \mu(dz_2)\right] \mu(dz_1) \notag \\  &=& \int_\calZ \E\left[ (D^+_{z_1} F)^2 \int_\calZ (D^+_{z_2} D^+_{z_1} F)^2 \mu(dz_2)\right] \mu(dz_1) \notag\\
& \leq & 2(q-1) \int_\calZ \E\left[ (D^+_{z_1} F)^2 \Gamma_0(D^+_{z_1} F , D^+_{z_1} F) \right] \mu(dz_1) \notag \\
&=& 2(q-1) \int_\calZ \E\left[ (D^+_{z_1} F)^2 \Gamma(D^+_{z_1} F , D^+_{z_1} F) \right] \mu(dz_1) \notag \\
&\leq &2(q-1) \E\left[\int_\calZ (D^+_{z_1} F)^4\mu(dz_1)\right] \leq 8q(q-1)\Bigl(\E\bigl[F^4\bigr]-3 \Bigr)\,.
   \label{e:bozo2}
\end{eqnarray}
Now write $\Phi(a) := a|a|$, $a\in \R$. In view of the inequality (proved e.g. in \cite[Section 4.2]{PTh})
\begin{equation}\label{e:pth}
[D^+_{z_2}\Phi(D^+_{z_1}F)]^2 \leq 8(D^+_{z_1}F)^2(D^+_{z_2} D^+_{z_1} F)^2+ 2(D^+_{z_2} D^+_{z_1} F)^4,
\end{equation}
valid $\mu^2$--almost everywhere, we deduce immediately that the process $z\mapsto v(z) := \Phi(D^+_z F)$ is such that $v(z)\in {\rm dom}\, D$ for $\mu$-almost every $z$, and $v\in {\rm dom}\, \delta$ --- this last fact being a consequence of the classical criterion stated in \cite[Theorem 5]{Lastsv} and of the estimates \eqref{e:bozo1}--\eqref{e:bozo2}, together with the fact that $\E[F^4]<\infty$ by assumption. Also, in view of the fact that $v\in L^1(\Prob\otimes \mu)$ by assumption, equation \eqref{e:pdelta} yields that 
$$
\delta(v) = \int_\calZ \Phi(D^-_z F) \eta(dz) - \int_\calZ \Phi(D^+_z F) \mu(dz). 
$$
We now fix $x\in \R$. Relation \eqref{mecke2} applied to the mapping
$$
V(z) = {\bf 1}_{\{ \mathfrak{f}(\eta+\delta_z)>x\}} \Phi \big( \mathfrak{f}(\eta+\delta_z) - \mathfrak{f}(\eta) \big),
$$
where $\mathfrak{f}$ is a representative of $F$, yields that 
\begin{eqnarray*}
&&\frac1q \E\left[ \int_\calZ D^+_z {\bf 1}_{\{ F>x\}} |D_z^+F| D_z^+ F \,  \mu(dz)\right]\\ &&= \frac1q \E\left[{\bf 1}_{\{ F>x\}} \left(  \int_\calZ \Phi(D^-_z F) \eta(dz) - \int_\calZ \Phi(D^+_z F) \mu(dz)\right)\right]\\ && = \frac1q\E\left[{\bf 1}_{\{ F>x\}}\delta(v)\right ] \leq  \frac1q \E\left[ \delta(v)^2\right]^{1/2}.
\end{eqnarray*}
To conclude, we use \cite[formula (56)]{Lastsv} as well as \eqref{e:pth} to deduce that
\begin{align*}
 \E\left[ \delta(v)^2\right] &\leq \E\left[ \int_\calZ v(z)^2\mu(dz)\right] + \E\left[ \int_\calZ \int_\calZ (D^+_y v(z))^2\mu(dz)\mu(dy)\right]\notag\\
  &\leq C+8B+2A\leq\bigl(4q+64q(q-1)+32q(q-1)\bigr)\Bigl(\E\bigl[F^4\bigr]-3 \Bigr)\notag\\
 &\leq100 q^2\Bigl(\E\bigl[F^4\bigr]-3 \Bigr)\,,
\end{align*}

which in turn implies that 
$$
\frac1q \E\left[ \delta(v)^2\right]^{1/2}\leq 10 \sqrt{\E\bigl[F^4\bigr]-3}\,,  
$$
where $A, B,C$ have been defined above, and where we have used the estimates \eqref{e:bozo1}--\eqref{e:bozo3}.
\end{proof}

\section{Proof of Theorem \ref{4mt}}\label{proof4mt}

In order to prove Theorem \ref{4mt} we have to establish new abstract bounds on the normal approximation of functionals on the Poisson space in the Wasserstein and Kolmogorov distances, respectively. Recall the definition of $\Gamma_0$ given in \eqref{e:Gamma0}.

\begin{prop}\label{genbound}
 Let $F\in \dom D$ be such that $\E[F]=0$ and let $N\sim \mathscr{N}(0,1)$ be a standard normal random variable. Assume that
\begin{equation}\label{e:assgb1}
D^+(L^{-1}F), \, FD^+(L^{-1} F) \in L^1(\Prob\otimes \mu).
\end{equation} 
 Then, we have the bounds
 \begin{align}
  d_1(F,N)&\leq \sqrt{\frac{2}{\pi}}\E\Babs{1-\Gamma_0\bigl(F,-L^{-1}F\bigr)} +\int_{\mathcal{Z}}\E\Bigl[\babs{D_z^+F}^2\babs{D_z^+L^{-1}F}\Bigr] \mu(dz)   \label{gb1}\\
  &\leq\sqrt{\frac{2}{\pi}}\babs{1-\E[F^2]}+ \sqrt{\frac{2}{\pi}}\sqrt{\Var\bigl(\Gamma_0(F,-L^{-1}F)\bigr)}\notag\\
  &\; +\int_{\mathcal{Z}}\E\Bigl[\babs{D_z^+F}^2\babs{D_z^+L^{-1}F}\Bigr]\mu(dz)\,. \label{gb2}
 \end{align}
If, furthermore, $F=I_q(f)$ for some $q\geq 1$ and some square-integrable, symmetric kernel $f$ on $\mathcal{Z}^q$ and $\E[F^2]=q!\Enorm{f}^2=1$, then $-L^{-1}F=q^{-1}F$, 
\begin{align*}
 \E\bigl[\Gamma_0(F,-L^{-1}F)\bigr]&=q^{-1}\E\bigl[\Gamma_0(F,F)\bigr]=1\quad\text{and}\\
 \int_{\mathcal{Z}}\E\bigl[\abs{D_z^+F}^2\abs{D_z^+L^{-1}F}\bigr]\mu(dz)&=q^{-1}\int_{\mathcal{Z}}\E\bigl[\abs{D_z^+F}^3\bigr]\mu(dz)\\
 &\leq\Bigl(q^{-1}\int_{\mathcal{Z}}\E\bigl[\abs{D_z^+F}^4\bigr]\mu(dz)\Bigr)^{1/2}
\end{align*}
so that the previous estimate \eqref{gb2} gives
\begin{align}
  d_1(F,N)&\leq \sqrt{\frac{2}{\pi}}\sqrt{\Var\bigl(q^{-1}\Gamma_0(F,F)\bigr)}+\frac{1}{\sqrt{q}}\Bigl(\int_{\mathcal{Z}}\E\bigl[\abs{D_z^+F}^4\bigr]\mu(dz)\Bigr)^{1/2}\,.\label{sb1}
\end{align}
\end{prop}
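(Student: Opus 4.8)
The plan is to run Stein's method for normal approximation, using the $\Gamma_0$-based integration-by-parts machinery assembled in Section \ref{framework} to convert $\E[Ff(F)]$ into something close to $\E[f'(F)]$. Fix a test function $h$ with $\fnorm{h'}\le 1$ and let $f=f_h$ be the bounded solution of the Stein equation $f'(x)-xf(x)=h(x)-\E[h(N)]$; the classical estimates (see \cite[Appendix C]{NouPecbook}) give $\fnorm{f'}\le\sqrt{2/\pi}$ and $\fnorm{f''}\le 2$, so $f$ is bounded with Lipschitz derivative. Evaluating at $F$ and taking expectations reduces everything to bounding $\babs{\E[f'(F)-Ff(F)]}$ uniformly over $h\in\mathcal{H}_1$, after which one takes the supremum to recover $d_1(F,N)$. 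Throughout, one may assume that the remainder integral on the right-hand side of \eqref{gb1} is finite, since otherwise there is nothing to prove; combined with $D^+(L^{-1}F)\in L^1(\Prob\otimes\mu)$ from \eqref{e:assgb1} and the Cauchy--Schwarz inequality this forces $D^+F\,D^+(L^{-1}F)\in L^1(\Prob\otimes\mu)$, so that $\Gamma_0(F,-L^{-1}F)$ is well-defined and all the integrability needed below is available.

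The heart of the argument is the integration by parts. Since $\E[F]=0$, $L^{-1}F\in\dom L$, and $D^+(L^{-1}F)\in L^1$, formula \eqref{formL} applied to $L^{-1}F$ gives $F=\int_\calZ D_z^+(L^{-1}F)\,\mu(dz)-\int_\calZ D_z^-(L^{-1}F)\,\eta(dz)$. As $f$ is Lipschitz, $G:=f(F)\in\dom D$ and $f(F)D^+(L^{-1}F)\in L^1$, so the $L^1$ integration-by-parts identity \eqref{e:ibpl1} (equivalently \eqref{e:cov2}) with $H=L^{-1}F$ yields $\E[Ff(F)]=-\E[\Gamma_0(f(F),L^{-1}F)]$. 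I then expand the increments of $f(F)$ inside $\Gamma_0$ via Lemma \ref{Difflemma}(b), writing $D_z^{\pm}f(F)=f'(F)D_z^{\pm}F+R_f^{\pm}(F,z)(D_z^{\pm}F)^2$ with $\babs{R_f^{\pm}(F,z)}\le\fnorm{f''}/2\le1$. Since $f'(F)$ does not depend on $z$, the first-order contribution factors out of both integrals defining $\Gamma_0$, giving $f'(F)\,\Gamma_0(F,L^{-1}F)=-f'(F)\,\Gamma_0(F,-L^{-1}F)$, while the second-order parts form a remainder $\mathcal{R}$.

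Collecting terms produces the exact identity $\E[f'(F)-Ff(F)]=\E\bigl[f'(F)\bigl(1-\Gamma_0(F,-L^{-1}F)\bigr)\bigr]+\E[\mathcal{R}]$. The main term is bounded by $\fnorm{f'}\,\E\babs{1-\Gamma_0(F,-L^{-1}F)}\le\sqrt{2/\pi}\,\E\babs{1-\Gamma_0(F,-L^{-1}F)}$. For the remainder I bound $\babs{R_f^{\pm}}\le\fnorm{f''}/2$ and invoke the Mecke formula \eqref{mecke2} to replace the $\eta(dz)$-integral by a $\mu(dz)$-integral: the substitution $\eta\mapsto\eta+\delta_z$ turns $(D_z^-F)^2\babs{D_z^-L^{-1}F}$ into $(D_z^+F)^2\babs{D_z^+L^{-1}F}$, so the two halves of $\Gamma_0$ contribute equally and, after the factor $\tfrac12$ cancels, $\babs{\E[\mathcal{R}]}\le\tfrac12\fnorm{f''}\int_\calZ\E\bigl[\babs{D_z^+F}^2\babs{D_z^+L^{-1}F}\bigr]\mu(dz)\le\int_\calZ\E\bigl[\babs{D_z^+F}^2\babs{D_z^+L^{-1}F}\bigr]\mu(dz)$. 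Taking the supremum over $h\in\mathcal{H}_1$ gives \eqref{gb1}. Then \eqref{gb2} follows from $\E\babs{1-X}\le\babs{1-\E[X]}+\sqrt{\Var(X)}$ with $X=\Gamma_0(F,-L^{-1}F)$, once one observes that \eqref{e:cov2} with $G=F$ gives $\E[\Gamma_0(F,-L^{-1}F)]=\Var(F)=\E[F^2]$.

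In the chaotic case $F=I_q(f)$ one has $-L^{-1}F=q^{-1}F$, so $\Gamma_0(F,-L^{-1}F)=q^{-1}\Gamma_0(F,F)$ and $\E[\Gamma_0(F,-L^{-1}F)]=\E[F^2]=1$, which annihilates the first summand of \eqref{gb2}; moreover $\babs{D_z^+L^{-1}F}=q^{-1}\babs{D_z^+F}$, so the last term equals $q^{-1}\int_\calZ\E\bigl[\babs{D_z^+F}^3\bigr]\mu(dz)$. A Cauchy--Schwarz estimate in $L^2(\Prob\otimes\mu)$ together with the identity $\int_\calZ\E[(D_z^+F)^2]\mu(dz)=q\,\E[F^2]=q$ (from $D_zF=qI_{q-1}(f(z,\cdot))$ and the isometry property) bounds this by $\bigl(q^{-1}\int_\calZ\E[(D_z^+F)^4]\mu(dz)\bigr)^{1/2}$, yielding \eqref{sb1}. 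I expect the only genuinely delicate point to be the integrability bookkeeping: one must confirm that assuming the right-hand side of \eqref{gb1} finite (together with \eqref{e:assgb1}) really does license both the integration by parts \eqref{e:ibpl1} and the Mecke exchange \eqref{mecke2} applied to the remainder, so that every manipulation above is carried out within $L^1(\Prob\otimes\mu)$.
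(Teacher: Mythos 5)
Your proposal is correct and follows essentially the same route as the paper: the Stein bound over test functions with $\fnorm{\psi'}\le\sqrt{2/\pi}$, $\fnorm{\psi''}\le 2$, the identity $\E[F\psi(F)]=-\E[\Gamma_0(\psi(F),L^{-1}F)]$ via \eqref{formL} and \eqref{e:ibpl1}, the first-order expansion of $D^{\pm}\psi(F)$ from Lemma \ref{Difflemma}(b), the Mecke exchange to fold the $\eta(dz)$-remainder into the $\mu(dz)$-integral, and the same Cauchy--Schwarz/isometry steps for \eqref{gb2} and \eqref{sb1}. The only cosmetic difference is that you reduce to the case where the remainder integral is finite, whereas the paper obtains the needed integrability directly from \eqref{e:assgb1} and the linear growth of $\psi$; both are valid.
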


\begin{remark} Under the assumptions of Proposition \ref{genbound}, we have that $F, L^{-1}F\in {\rm dom}\, D$, in such a way that $\Gamma_0 (F, -L^{-1} F)$ is an element ot $L^1(\Prob)$. It follows that the variance $\Var\bigl(\Gamma_0(F,-L^{-1}F)\bigr)$ is always well-defined, albeit possibly infinite.

\end{remark}

\begin{proof}[Proof of Proposition \ref{genbound}]
 We apply Stein's method for normal approximation. Define the class $\mathscr{F}_1$ of all continuously differentiable functions $\psi$ on $\R$ such that both $\psi$ and $\psi'$ are Lipschitz-continuous with minimal Lipschitz constants 
\begin{equation}\label{steinsol}
 \fnorm{\psi'}\leq\sqrt{\frac{2}{\pi}}\quad\text{and}\quad\fnorm{\psi''}\leq 2\,.
\end{equation}
 Then, it is well-known (see e.g. Theorem 3 of \cite{BPsv}, and the references therein) that 
 \begin{equation}\label{pgb1}
  d_1(F,N)\leq\sup_{\psi\in\mathscr{F}_1}\babs{\E\bigl[\psi'(F)-F\psi(F)\bigr]}\,.
 \end{equation}
Let us thus fix $\psi\in\mathscr{F}_1$. The Lipschitz property of $\psi$ implies that $\psi(F)\in {\rm dom} D$, whereas the trivial estimate
$$
|\psi(F) D^+(L^{-1} F)| \leq \big(| \psi(0) | +\sqrt{2/\pi}\, |F|\big)\times  |D^+(L^{-1} F)|
$$
implies that $\psi(F) D^+(L^{-1} F)\in L^1(\Prob\otimes \mu)$. Using that $\E[F]=0$ we therefore deduce from \eqref{e:cov2} that 
\begin{align}\label{pgb2}
 \E\bigl[F\psi(F)\bigr]&=\E\bigl[\psi(F)\cdot LL^{-1}F\bigr]=-\E\bigl[\Gamma_0\bigl(\psi(F),L^{-1}F\bigr)\bigr]\,.
\end{align}
Now, by the definition of $\Gamma_0$ and Lemma \ref{Difflemma} (b) we obtain that 
\begin{align}\label{pgb3}
 &2\Gamma_0\bigl(\psi(F),L^{-1}F\bigr)=\int_\calZ\bigl(D_z^+\psi(F)\bigr)\bigl(D_z^+L^{-1}F\bigr)\mu(dz)+\int_\calZ \bigl(D_z^-\psi(F)\bigr)\bigl(D_z^-L^{-1}F\bigr)\eta(dz)\notag\\
&=\psi'(F)\int_\calZ\bigl(D_z^+F\bigr)\bigl(D_z^+L^{-1}F\bigr)\mu(dz)+\int_\calZ R_\psi^+(F,z)\bigl(D_z^+F\bigr)^2\bigl(D_z^+L^{-1}F\bigr)\mu(dz)\notag\\
&\;+\psi'(F)\int_\calZ\bigl(D_z^-F\bigr)\bigl(D_z^-L^{-1}F\bigr)\eta(dz)+\int_\calZ R_\psi^-(F,z)\bigl(D_z^-F\bigr)^2\bigl(D_z^-L^{-1}F\bigr)\eta(dz)\notag\\
&=:\psi'(F)\int_\calZ\bigl(D_z^+F\bigr)\bigl(D_z^+L^{-1}F\bigr)\mu(dz)+R_+ \notag\\
&\;+\psi'(F)\int_\calZ\bigl(D_z^-F\bigr)\bigl(D_z^-L^{-1}F\bigr)\eta(dz) +R_-\notag\\
&=2\psi'(F)\Gamma_0(F,L^{-1}F)+R_+ +R_-
\end{align}
with 
\begin{align}\label{pgb4}
 \E\abs{R_+}&\leq\frac{\fnorm{\psi''}}{2} \E\biggl[\int_\calZ \babs{D_z^+F}^2\babs{D_z^+L^{-1}F}\mu(dz)\biggr]\notag\\
 &\leq \E\biggl[\int_\calZ \babs{D_z^+F}^2\babs{D_z^+L^{-1}F}\mu(dz)\biggr]
\end{align}
and 
\begin{align}\label{pgb5}
 \E\abs{R_-}&\leq\frac{\fnorm{\psi''}}{2} \E\biggl[\int_\calZ \babs{D_z^-F}^2\babs{D_z^-L^{-1}F}\eta(dz)\biggr]\notag\\
 &\leq \E\biggl[\int_\calZ \babs{D_z^-F}^2\babs{D_z^-L^{-1}F}\eta(dz)\biggr]\notag\\
 &=\E\biggl[\int_\calZ \babs{D_z^+F}^2\babs{D_z^+L^{-1}F}\mu(dz)\biggr]\,,
\end{align}
where the last identity holds by virtue of \eqref{mecke2}, as applied to
$$
V(z) = \big(\mathfrak{f}(\eta+\delta_z) - \mathfrak{f}(\eta)\big)^2 \big| \mathfrak{f}^*(\eta+\delta_z) - \mathfrak{f}^*(\eta)\big|,
$$
where $\mathfrak{f}$ is a representative of $F$ and $\mathfrak{f}^*$ is a representative of $L^{-1}F$. Thus, from \eqref{pgb2} and \eqref{pgb3} we infer 
{
\begin{align}\label{pgb6}
 \Babs{\E\bigl[\psi'(F)-F\psi(F)\bigr]}&\leq\Babs{\E\bigl[\psi'(F)\bigl(1-\Gamma_0\bigl(F,-L^{-1}F\bigr)\bigr)\bigr]}+\frac12 \bigl(\E|R_+ |+\E|R_- | \bigr) \,,
\end{align}
}
and from \eqref{steinsol}, \eqref{pgb4}, \eqref{pgb5} and \eqref{pgb6} we conclude that 
\begin{align*}
 \babs{\E\bigl[\psi'(F)-F\psi(F)\bigr]}&\leq\sqrt{\frac{2}{\pi}}\E\babs{1-\Gamma_0\bigl(F,-L^{-1}F\bigr)}+\E\biggl[\int_\calZ \babs{D_z^+F}^2\babs{D_z^+L^{-1}F}\mu(dz)\biggr].
\end{align*}
Plugging such an estimate into \eqref{pgb1} yields \eqref{gb1}. By \eqref{e:cov2} we know that 
\begin{equation*}
 \E\bigl[\Gamma_0(F,-L^{-1}F)\bigr]=\Var(F)=\E[F^2]
\end{equation*}
and, hence, \eqref{gb2} follows from \eqref{gb1} by using the triangle and Cauchy-Schwarz inequalities. To prove \eqref{sb1} we first apply the Cauchy-Schwarz inequality to obtain
\begin{align*}
 \int_{\mathcal{Z}}\E\bigl[\abs{D_z^+F}^3\bigr]\mu(dz)&\leq\Bigl(\int_{\mathcal{Z}}\E\bigl[\abs{D_z^+F}^4\bigr]\mu(dz)\Bigr)^{1/2} \Bigl(\int_{\mathcal{Z}}\E\bigl[\abs{D_z^+F}^2\bigr]\mu(dz)\Bigr)^{1/2}
\end{align*}
But, by using the isometry properties of multiple integrals we have 
\begin{align}\label{iso}
 \int_{\mathcal{Z}}\E\bigl[\abs{D_z^+F}^2\bigr]\mu(dz)&=q^2 \int_{\mathcal{Z}}\E\bigl[I_{q-1}\bigl(f(z,\cdot)\bigr)^2\bigr]\mu(dz)\notag\\
& =q^2 (q-1)!\int_{\mathcal{Z}}\Enorm{f(z,\cdot)}^2\mu(dz)=q q!\Enorm{f}^2=q\E[F^2]=q\,.
\end{align}
Hence, we obtain
\begin{equation*}
 q^{-1}\int_{\mathcal{Z}}\E\bigl[\abs{D_z^+F}^3\bigr]\mu(dz)\leq\frac{1}{\sqrt{q}}\Bigl(\int_{\mathcal{Z}}\E\bigl[\abs{D_z^+F}^4\bigr]\mu(dz)\Bigr)^{1/2} 
\end{equation*}
proving \eqref{sb1}.\\
\end{proof}

The next result provides a similar estimate in the Kolmogorov distance.

\begin{prop}\label{p:genkol} Under the same assumptions as in Proposition \ref{genbound}, one has the bounds
 \begin{eqnarray}
 \dk(F,N)&\leq& \E\Babs{1-\Gamma_0\bigl(F,-L^{-1}F\bigr)}\label{e:k1} \\
 && + \E\left[\big (|F|+\sqrt{2\pi}/4\big) \int_\calZ (D^+_zF)^2|D_z^+L^{-1}F| \mu(dz)\right] \notag \\
 &&\quad\quad\quad +\sup_{x\in \R} \E\left[ \int_\calZ (D_z^+F)\, |D_z^+(L^{-1}F) | \, D_z^+{\bf 1}_{\{ F>x \} } \mu(dz)\right]\notag\\
 &\leq& \babs{1-\E[F^2]}+ \sqrt{\Var\bigl(\Gamma_0(F,-L^{-1}F)\bigr)}\label{e:k2}\\
 &&\quad + \E\left[\left ( \int_\calZ (D^+_z F)^2\mu(dz)\right)^2\right]^{1/4}\bigl(1+\E[F^4]^{1/4}\bigr) \notag \\ 
&&  \quad\quad\times \sqrt{ \E\left[ \int_\calZ (D^+_zF)^2(D^+_z(L^{-1}F) )^2\mu(dz)\right] } \notag \\
 &&\quad\quad\quad\quad+ \sup_{x\in \R} \E\left[ \int_\calZ (D_z^+F)\, |D_z^+(L^{-1}F) |\, D^+{\bf 1}_{\{ F>x\}} \mu(dz)\right]\notag.
 \end{eqnarray}
 If $F=I_q(f)$ for some $q\geq 1$ and some square-integrable, symmetric kernel $f$ on $\mathcal{Z}^q$ and $\E[F^2]=q!\Enorm{f}^2=1$, then \eqref{e:k2} becomes
 \begin{align} \label{e:k3}
 \dk (F,N)& \leq \sqrt{\Var\bigl(q^{-1} \Gamma_0(F,F)\bigr)}\notag\\
 &\;+\frac{1}{q}\bigl(1+\E[F^4]^{1/4}\bigr) \E\left[\left ( \int_\calZ (D^+_z F)^2\mu(dz)\right)^2\right]^{1/4}\sqrt{ \E\left[ \int_\calZ (D^+_zF)^4 \mu(dz)\right] } \notag \\
 &\;+\frac1q\sup_{x\in \R} \E\left[ \int_\calZ (D_z^+F)\,  |D_z^+F | \, D_z^+{\bf 1}_{\{ F>x\}} \mu(dz)\right]\,.
\end{align}

\end{prop}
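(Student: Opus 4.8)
The plan is to run Stein's method for the Kolmogorov distance, keeping the same architecture as the proof of Proposition \ref{genbound} but replacing the smooth test function with the (non-differentiable) solution of the Stein equation and adding one new ingredient to control the resulting boundary term. For fixed $x\in\R$, let $f_x$ denote the bounded solution of
\[
f_x'(w)-wf_x(w)={\bf 1}_{\{w\le x\}}-\Prob(N\le x),
\]
which satisfies the classical estimates $\fnorm{f_x}\le\sqrt{2\pi}/4$ and $\fnorm{f_x'}\le1$ (see \cite{CGS}), so that $\dk(F,N)\le\sup_{x\in\R}\babs{\E[f_x'(F)-Ff_x(F)]}$. Since $f_x$ is Lipschitz we have $f_x(F)\in\dom D$, and since $\fnorm{f_x}<\infty$ while $D^+(L^{-1}F)\in L^1(\Prob\otimes\mu)$ by \eqref{e:assgb1}, the product $f_x(F)D^+(L^{-1}F)$ lies in $L^1(\Prob\otimes\mu)$. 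Hence \eqref{e:cov2} applies (with $G=f_x(F)$) and gives the exact identity $\E[Ff_x(F)]=-\E[\Gamma_0(f_x(F),L^{-1}F)]$.

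The core of the argument is a first-order expansion of $D_z^\pm f_x(F)$ around $F$ inside $\Gamma_0$. Writing $D_z^\pm f_x(F)=f_x'(F)D_z^\pm F+\rho_z^\pm$, the main terms recombine pathwise, by the very definition of $\Gamma_0$, into $f_x'(F)\Gamma_0(F,L^{-1}F)$; together with $\E[f_x'(F)]$, the identity $1+\Gamma_0(F,L^{-1}F)=1-\Gamma_0(F,-L^{-1}F)$ and the bound $\fnorm{f_x'}\le1$, this produces the first term $\E\babs{1-\Gamma_0(F,-L^{-1}F)}$ of \eqref{e:k1}. To handle the remainders I use the integral representation $\rho_z^+=\int_0^{D_z^+F}[f_x'(F+t)-f_x'(F)]\,dt$ (and its analogue for $\rho_z^-$), substitute the Stein ODE $f_x'(w)=wf_x(w)+{\bf 1}_{\{w\le x\}}-\Prob(N\le x)$, and split each remainder into a smooth part and an indicator part. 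For the smooth part I write $(F+t)f_x(F+t)-Ff_x(F)=tf_x(F+t)+F(f_x(F+t)-f_x(F))$ and use $\fnorm{f_x}\le\sqrt{2\pi}/4$, $\fnorm{f_x'}\le1$ to obtain a pointwise bound of order $(\abs{F}+\sqrt{2\pi}/4)(D_z^\pm F)^2$; this yields the second term of \eqref{e:k1}, once the Mecke formula \eqref{mecke2} is used to turn the $D^-$/$\eta(dz)$ contribution into a $D^+$/$\mu(dz)$ one, exactly as in \eqref{pgb5}.

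The delicate point, and the main obstacle, is the indicator part $\int_0^{D_z^\pm F}[{\bf 1}_{\{F+t\le x\}}-{\bf 1}_{\{F\le x\}}]\,dt$: the jump of $f_x'$ at $x$ forbids the Taylor-type estimate available in the Wasserstein case. Here I exploit that $t\mapsto{\bf 1}_{\{F+t\le x\}}$ is monotone, so that the integrand never exceeds $\babs{{\bf 1}_{\{F+D_z^+F\le x\}}-{\bf 1}_{\{F\le x\}}}=\babs{D_z^+{\bf 1}_{\{F>x\}}}$ in modulus; this bounds the indicator part of $\rho_z^+$ by $\abs{D_z^+F}\,\babs{D_z^+{\bf 1}_{\{F>x\}}}$, and the same monotonicity bound applies to $\rho_z^-$, whose $\eta(dz)$-integral is again converted to a $\mu(dz)$-integral by \eqref{mecke2}. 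Invoking the elementary sign fact already used in Lemma \ref{l:indicator}, namely $(D_z^+F)(D_z^+{\bf 1}_{\{F>x\}})\ge0$, the absolute values may be dropped, and after taking $\sup_{x\in\R}$ this produces precisely the third term of \eqref{e:k1}.

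It remains to deduce \eqref{e:k2} and \eqref{e:k3}. For \eqref{e:k2} I bound the first term by the triangle and Cauchy-Schwarz inequalities, using $\E[\Gamma_0(F,-L^{-1}F)]=\E[F^2]$, which follows from \eqref{e:cov2} with $G=F$. The second term is estimated by a chain of Cauchy-Schwarz inequalities: first in $z$ against $\mu$, to factor out $\bigl(\int_\calZ(D_z^+F)^2(D_z^+L^{-1}F)^2\mu(dz)\bigr)^{1/2}$; then in $\omega$, to peel off $\sqrt{\E[\int_\calZ(D_z^+F)^2(D_z^+L^{-1}F)^2\mu(dz)]}$; and once more, to separate the scalar factor $\E[(\abs{F}+\sqrt{2\pi}/4)^4]^{1/4}\le1+\E[F^4]^{1/4}$ (using Minkowski and $\sqrt{2\pi}/4<1$) from $\E[(\int_\calZ(D_z^+F)^2\mu(dz))^2]^{1/4}$. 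Finally, \eqref{e:k3} follows by specialising to $F=I_q(f)$ with $\E[F^2]=1$, in which case $-L^{-1}F=q^{-1}F$, so that $D_z^+L^{-1}F=-q^{-1}D_z^+F$ and every occurrence of $L^{-1}F$ contributes an explicit factor $q^{-1}$, turning the middle factor into $q^{-1}\sqrt{\E[\int_\calZ(D_z^+F)^4\mu(dz)]}$ and the last term into its stated form.
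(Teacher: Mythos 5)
Your overall architecture is the paper's: Stein's equation for ${\bf 1}_{\{\cdot\le x\}}$, the covariance identity \eqref{e:cov2} applied to $G=f_x(F)$, a first-order expansion of $D_z^{\pm}f_x(F)$ whose main terms reassemble into $f_x'(F)\Gamma_0(F,L^{-1}F)$, a smooth-plus-indicator splitting of the remainders via the Stein ODE, the sign fact $(D_z^+F)(D_z^+{\bf 1}_{\{F>x\}})\ge 0$, and Mecke to convert the $\eta(dz)$-integrals; the passage to \eqref{e:k2} by the Cauchy--Schwarz chain and to \eqref{e:k3} by $-L^{-1}F=q^{-1}F$ is also exactly as in the paper. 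However, there is one genuine gap, in the smooth part of the remainder $\rho_z^-$. You anchor the Taylor-type bound at $F=\mathfrak{f}(\eta)$, obtaining $|\rho_z^-|_{\rm smooth}\le \tfrac12 (D_z^-F)^2\bigl(|F|+\sqrt{2\pi}/4\bigr)$. This is a correct pointwise estimate, but it does not transfer through Mecke "exactly as in \eqref{pgb5}": in \eqref{pgb5} the integrand is built entirely from increments, whereas here the undifferenced factor $|F|=|\mathfrak{f}(\eta)|$ sitting inside the $\eta(dz)$-integral becomes $|\mathfrak{f}(\eta+\delta_z)|=|F+D_z^+F|$ after applying \eqref{mecke2}. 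What you actually obtain for the second term of \eqref{e:k1} is therefore of the form
\begin{equation*}
\tfrac14\,\E\Bigl[\bigl(|F|+\sqrt{2\pi}/4\bigr)\!\int_\calZ (D_z^+F)^2|D_z^+L^{-1}F|\,\mu(dz)\Bigr]
+\tfrac14\,\E\Bigl[\int_\calZ \bigl(|F+D_z^+F|+\sqrt{2\pi}/4\bigr)(D_z^+F)^2|D_z^+L^{-1}F|\,\mu(dz)\Bigr],
\end{equation*}
which leaves an irreducible extra contribution of order $\E\bigl[\int_\calZ |D_z^+F|^3\,|D_z^+L^{-1}F|\,\mu(dz)\bigr]$ that cannot be absorbed into $\E\bigl[(|F|+\sqrt{2\pi}/4)\int_\calZ(D_z^+F)^2|D_z^+L^{-1}F|\,\mu(dz)\bigr]$ in general. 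So \eqref{e:k1} as stated is not reached, and the constants propagating to \eqref{e:k3} and to Theorem \ref{4mt} would change.

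The paper resolves precisely this point in Proposition \ref{p:stein}(g): the increment $g_x(w)-g_x(w-h)-hg_x'(w)$ is still linearized with coefficient $g_x'(w)$ (so the main terms recombine as needed), but the error is bounded by $\tfrac{3}{2}h^2\bigl(|w-h|+\sqrt{2\pi}/4\bigr)$, i.e.\ anchored at $w-h=\mathfrak{f}(\eta-\delta_z)$ at the cost of a factor $3$. Under Mecke, $|\mathfrak{f}(\eta-\delta_z)|$ maps back to $|F|$, and the coefficients $\tfrac12\cdot\tfrac12$ (plus part) and $\tfrac12\cdot\tfrac32$ (minus part) sum to $1$, giving the second term of \eqref{e:k1} exactly. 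Your argument is repaired by replacing your decomposition of $((F-h)+u)f_x((F-h)+u)-Ff_x(F)$, anchored at $F$, with one anchored at $F-h$ (using the two-increment estimate \eqref{e:zumba}); everything else in your proposal then goes through.
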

\begin{proof} Fix $x\in \R$. According to Propositon \ref{p:stein}, we can write
$$
\left| \Prob(F\leq x) - \Prob(N\leq x)  \right| = \left| \E[ g'_x(F) - Fg_x(F)]\right|,
$$
where $g_x$ is the solution of the Stein equation \eqref{eq:stein-equation} associated with $x$, whose properties are stated in Proposition \ref{p:stein}. Using Proposition \ref{p:stein} and reasoning as in the proof of Proposition \ref{genbound}, one deduces that
\begin{eqnarray*}
&&\left| \E[ g'_x(F) - Fg_x(F)]\right|\\ &&\leq  \!\E\Big[| g'_x(F)| |1-\Gamma_0(F, -L^{-1} F)|\Big] \\
&&\quad\quad +\frac14 \E\left[ \left( |F|+ \sqrt{2\pi}/4\right) \int_\calZ (D_z^+F)^2 | D_z^+(L^{-1} F)|\mu(dz) \right] \\
&&\quad\quad + \frac12 \E\left[ \int_\calZ (D^+_z F) | D_z^+(L^{-1} F)|\, D^+_z{\bf 1}_{\{F>x\}} \mu(dz)\right]\\
&&\quad\quad +\frac34 \E\left[ \int_\calZ \left( |F- D_z^{-}F|+ \sqrt{2\pi}/4\right)  (D_z^-F)^2 | D_z^-(L^{-1} F)|\eta(dz) \right]\\
&&\quad\quad + \frac12 \E\left[ \int_\calZ (D^-_z F) | D_z^-(L^{-1} F)|\, D^-_z{\bf 1}_{\{F>x\}} \eta(dz)\right].
\end{eqnarray*}
Note that, in order to obtain the previous estimate, one has to use Point (f) and Point (g) in Proposition \ref{p:stein}, respectively, in order to control the quantities $| D_z^+g_x(F) - g'_x(F)D_z^+F|$ and $|D_z^-g_x(F)- g'_x(F)D^{-}_zF|$. Bound \eqref{e:k1} can now be deduced by applying \eqref{mecke2} to the mappings
$$
V(z) = \left( |\mathfrak{f}(\eta) |+ \sqrt{2\pi}/4\right) \left(\mathfrak{f}(\eta+\delta_z) - \mathfrak{f}(\eta) \right)^2 | \mathfrak{f}^*(\eta+\delta_z) - \mathfrak{f}^*(\eta)|,
$$
and 
$$
V(z) = {\bf 1}_{\{ \mathfrak{f}(\eta+\delta_z)>x\}}  \left(\mathfrak{f}(\eta+\delta_z) - \mathfrak{f}(\eta) \right) | \mathfrak{f}^*(\eta+\delta_z) - \mathfrak{f}^*(\eta)|,
$$
where $\mathfrak{f}$ and $\mathfrak{f}^*$ are representatives of $F$ and $L^{-1}F$, respectively. The estimate \eqref{e:k2} can be deduced by applying  the Cauchy-Schwarz and triangle inequalities to the middle term of \eqref{e:k1}. 
The second part of the statement immediately follows from \eqref{e:k2} and from the fact that, if $F = I_q(f)$, then $-L^{-1}F = q^{-1}F$.
\end{proof}

\begin{proof}[End of the proof of Theorem \ref{4mt}] Since, under {\bf Assumption A}, one has that
$$
\Gamma(F,F) = \Gamma_0(F,F), \quad \mbox{a.s.--}\Prob,
$$
the estimate \eqref{4mb} is a direct consequence of \eqref{sb1}, Lemma \ref{vargamma} and Lemma \ref{remlemma}, as well as of elementary simplifications. Similarly, \eqref{e:4mb3} follows from \eqref{e:k3}, Lemma \ref{vargamma}, Lemma \ref{remlemma} and Lemma \ref{l:indicator}, combined with the estimate
\begin{equation*}
 \E\left[\left ( \int_\calZ (D^+_z F)^2\mu(dz)\right)^2\right]^{1/4}\leq 4^{1/4} \E[(\Gamma_0(F,F))^2]^{1/4}\leq \sqrt{2q}\, \E[F^4]^{1/4},
\end{equation*}
where we have used \eqref{e:cb2}. 
{ Finally, the bound \eqref{e:4mb4} follows from \eqref{e:4mb3} by distinguishing the cases 
\[\E[F^4]>3+\frac{1}{121}\quad\text{and}\quad\E[F^4]\leq 3+\frac{1}{121}\]
and by taking into account the fact that the Kolmogorov distance is bounded by $1$.}
\end{proof}

\begin{proof}[Proof of Proposition \ref{nogauss}] Fix $q\geq 2$. Reasoning as in \cite[Corollary 2]{NP05}, if a Gaussian random variable $F:= I_q(f)\in C_q$ such that $\E[I_q(f)^2]:=c>0$ existed, then $\E[F^4]- 3c^2=0$. Formulae \eqref{vg2}--\eqref{vg3}, together with the explicit form of $D_q$ would therefore imply that $f\otimes_r f=0$ for every $r=1,...,q-1$, where $q$ is the $r$th contraction of $f$ with itself, as defined in \cite[Appendix B]{NouPecbook}. This conclusion contradicts the fact that $c = q! \|f\|_2^2 > 0$. The case $q=1$ follows immediately from the relation $\E[I_1(f)^4] = 3\|f\|^4_2+ \int_\calZ f^4 d\mu$.
\end{proof}


\section{Proof of Theorem \ref{4mtg}}\label{proof4mtg}
We begin by giving the analog of Proposition \ref{genbound} for Gamma approximation.

\begin{prop}\label{genboundg}
 Let $F\in \dom D$ satisfy the same assumptions as in the statement of Proposition \ref{genbound}, and let $Z_\nu\sim \Gammabar(\nu)$ have the centered Gamma distribution with parameter $\nu>0$. Then, we have the bounds
 \begin{align}
  d_2(F,Z_\nu)&\leq\max\Bigl(1,\frac{2}{\nu}\Bigr)\E\Babs{2(F+\nu)-\Gamma_0\bigl(F,-L^{-1}F\bigr)} \notag\\
  &\;+\max\Bigl(1,\frac{1}{\nu}+\frac12\Bigr)\int_{\mathcal{Z}}\E\Bigl[\babs{D_z^+F}^2\babs{D_z^+L^{-1}F}\Bigr] \mu(dz)   \label{gbg1}\\
  &\leq\max\Bigl(1,\frac{2}{\nu}\Bigr)\babs{2\nu-\E[F^2]}+ \max\Bigl(1,\frac{2}{\nu}\Bigr)\sqrt{\Var\Bigl(2F-\Gamma_0\bigl(F,-L^{-1}F\bigr)\Bigr)}\notag\\
  &\; +\max\Bigl(1,\frac{1}{\nu}+\frac12\Bigr)\int_{\mathcal{Z}}\E\Bigl[\babs{D_z^+F}^2\babs{D_z^+L^{-1}F}\Bigr]\mu(dz)\,. \label{gbg2}
 \end{align}
 If, furthermore, $F=I_q(f)$ for some $q\geq 1$ and some square-integrable, symmetric kernel $f$ on $\mathcal{Z}^q$ and $\E[F^2]=q!\Enorm{f}^2=2\nu$, then $-L^{-1}F=q^{-1}F$, 
\begin{align*}
 \E\bigl[\Gamma(F,-L^{-1}F)-2F\bigr]&=q^{-1}\E\bigl[\Gamma_0(F,F)\bigr]=2\nu\quad\text{and}\\
 \int_{\mathcal{Z}}\E\bigl[\abs{D_z^+F}^2\abs{D_z^+L^{-1}F}\bigr]\mu(dz)&=q^{-1}\int_{\mathcal{Z}}\E\bigl[\abs{D_z^+F}^3\bigr]\mu(dz)\\
 &\leq\biggl(\frac{2\nu}{q}\int_{\mathcal{Z}}\E\bigl[\abs{D_z^+F}^4\bigr]\mu(dz)\biggr)^{1/2}
\end{align*}
so that the previous estimate \eqref{gbg2} can be further bounded to give
\begin{align}
  d_2(F,Z_\nu)&\leq \max\Bigl(1,\frac{2}{\nu}\Bigr)\sqrt{\Var\Bigl(2F-q^{-1}\Gamma_0\bigl(F,F\bigr)\Bigr)}\notag\\
  &\;+\max\biggl(\sqrt{2\nu},\sqrt{\frac{2}{\nu}}+\sqrt{\frac{\nu}{2}}\,\biggr)\biggl(\frac{1}{q}\int_{\mathcal{Z}}\E\bigl[\abs{D_z^+F}^4\bigr]\mu(dz)\biggr)^{1/2}\,.\label{sbg1}
\end{align}
\end{prop}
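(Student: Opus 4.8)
The plan is to replicate, \emph{mutatis mutandis}, the argument used to establish Proposition \ref{genbound}, with the normal Stein operator replaced by its centered Gamma counterpart. Recall that a random variable $W$ has the $\Gammabar(\nu)$ distribution if and only if $\E[2(W+\nu)\psi'(W)-W\psi(W)]=0$ for all sufficiently regular $\psi$; this is readily derived from the classical Stein operator of $X_{\nu/2,1}$ via the affine change of variables $Z_\nu=2X_{\nu/2,1}-\nu$. Accordingly, the first step is to invoke Stein's method for the centered Gamma distribution in the form
\begin{equation*}
 d_2(F,Z_\nu)\leq\sup_\psi\babs{\E\bigl[2(F+\nu)\psi'(F)-F\psi(F)\bigr]},
\end{equation*}
where the supremum runs over the solutions $\psi=\psi_h$ of the Stein equation $2(x+\nu)\psi'(x)-x\psi(x)=h(x)-\E[h(Z_\nu)]$ associated with test functions $h\in\mathcal{H}_2$. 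The entire scheme hinges on the bounds $\fnorm{\psi'}\leq\max(1,2/\nu)$ and $\fnorm{\psi''}\leq\max(2,2/\nu+1)$ for such solutions: these are the Gamma analogues of \eqref{steinsol}, they are available from the literature on Stein's method for the Gamma law, and they are precisely the source of the constants $C_1(\nu),C_2(\nu)$.

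Next I would fix such a $\psi$ and proceed exactly as in \eqref{pgb2}--\eqref{pgb3}. Since the hypotheses coincide with those of Proposition \ref{genbound} and $\psi$ is Lipschitz, the covariance identity \eqref{e:cov2} yields $\E[F\psi(F)]=\E[\Gamma_0(\psi(F),-L^{-1}F)]$. Expanding $D_z^\pm\psi(F)$ by means of the first-order Taylor formula of Lemma \ref{Difflemma}(b) and recalling the definition \eqref{e:Gamma0} of $\Gamma_0$, I would write
\begin{equation*}
 \Gamma_0\bigl(\psi(F),-L^{-1}F\bigr)=\psi'(F)\,\Gamma_0\bigl(F,-L^{-1}F\bigr)+\tfrac12\bigl(R_++R_-\bigr),
\end{equation*}
where the residuals $R_\pm$ are the exact analogues of those in \eqref{pgb3}, each controlled in $L^1$ by $\tfrac{\fnorm{\psi''}}{2}$ times an integral of $\babs{D_z^\pm F}^2\babs{D_z^\pm L^{-1}F}$. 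As in \eqref{pgb5}, the Mecke formula \eqref{mecke2}, applied to $V(z)=(\mathfrak{f}(\eta+\delta_z)-\mathfrak{f}(\eta))^2\babs{\mathfrak{f}^*(\eta+\delta_z)-\mathfrak{f}^*(\eta)}$ with $\mathfrak{f},\mathfrak{f}^*$ representatives of $F,L^{-1}F$, converts the $\eta(dz)$-integral in $R_-$ into the $\mu(dz)$-integral of $\babs{D_z^+F}^2\babs{D_z^+L^{-1}F}$. Collecting these ingredients gives
\begin{equation*}
 \E\bigl[2(F+\nu)\psi'(F)-F\psi(F)\bigr]=\E\bigl[\psi'(F)\bigl(2(F+\nu)-\Gamma_0(F,-L^{-1}F)\bigr)\bigr]-\tfrac12\E\bigl[R_++R_-\bigr],
\end{equation*}
and the triangle inequality together with the sup-norm bounds on $\psi',\psi''$ produces \eqref{gbg1}.

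To pass from \eqref{gbg1} to \eqref{gbg2} I would center the first term: since $\E[F]=0$ and $\E[\Gamma_0(F,-L^{-1}F)]=\E[F^2]$ by \eqref{e:cov2}, one has $\E[2(F+\nu)-\Gamma_0(F,-L^{-1}F)]=2\nu-\E[F^2]$, whence $\E\babs{2(F+\nu)-\Gamma_0(F,-L^{-1}F)}\leq\babs{2\nu-\E[F^2]}+\sqrt{\Var(2F-\Gamma_0(F,-L^{-1}F))}$, the additive constant $2\nu$ dropping out of the variance. The final specialization to $F=I_q(f)$ with $\E[F^2]=2\nu$ is then purely computational: there $-L^{-1}F=q^{-1}F$, the first variance term becomes $\Var(2F-q^{-1}\Gamma_0(F,F))$, and the cross term equals $q^{-1}\int_\calZ\E\bigl[\babs{D_z^+F}^3\bigr]\mu(dz)$, which I would bound by Cauchy--Schwarz using the isometry computation $\int_\calZ\E\bigl[\babs{D_z^+F}^2\bigr]\mu(dz)=q\,\E[F^2]=2\nu q$ of \eqref{iso}; simplifying the resulting constants via $\max(1,2/\nu)$ and $\max(1,1/\nu+1/2)\sqrt{2\nu}=\max(\sqrt{2\nu},\sqrt{2/\nu}+\sqrt{\nu/2})$ yields \eqref{sbg1}.

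I expect the only genuine obstacle to lie in the first step: establishing that the solutions of the centered Gamma Stein equation enjoy the stated bounds $\fnorm{\psi'}\leq\max(1,2/\nu)$ and $\fnorm{\psi''}\leq\max(2,2/\nu+1)$ on the class $\mathcal{H}_2$, since everything downstream is a verbatim transcription of the normal case. In particular, the regularity of these solutions near the boundary of the support of $Z_\nu$ (where the coefficient $2(x+\nu)$ degenerates) is the delicate analytic point, with the moment identity \eqref{e:lcm} underpinning the consistency of the variance term; by contrast, the Malliavin and Mecke manipulations carry over with no essential change.
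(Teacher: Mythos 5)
Your proposal is correct and follows essentially the same route as the paper: the authors likewise invoke the bounds on the solution of the centered Gamma Stein equation (citing Theorem 2.3 of \cite{DP16b}, which yields exactly the Lipschitz constants $\max(1,2/\nu)$ for $\psi'$ and the corresponding bound for $\psi''$ producing $\max(1,\tfrac1\nu+\tfrac12)$), and then state that the rest is a verbatim transcription of the proof of Proposition \ref{genbound} — precisely the Taylor expansion of $D_z^\pm\psi(F)$, the Mecke conversion of the $\eta(dz)$ residual, the centering via $\E[\Gamma_0(F,-L^{-1}F)]=\E[F^2]$, and the Cauchy--Schwarz step with $\int_\calZ\E[|D_z^+F|^2]\mu(dz)=2\nu q$, all of which you reproduce accurately (your passing remark about \eqref{e:lcm} playing a role here is inessential, as that identity only enters later in Lemma \ref{sandwich}).
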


\begin{proof}
Using the recently obtained bounds on the solution to the centered Gamma Stein equation from Theorem 2.3 of \cite{DP16b}, it is easy to see that 
\begin{equation*}
 d_2(F,Z_\nu)\leq\sup_{\psi\in\F_{2,\nu}}\Babs{\E\bigl[2(F+\nu)\psi'(F)-F\psi(F)\bigr]}\,,
\end{equation*}
where $\F_{2,\nu}$ denotes the class of all continuously differentiable functions $\psi$ in $\R$ such that both $\psi$ and $\psi'$ are Lipschitz-continuous with minimum Lipschitz-constants
\begin{equation*}
 \fnorm{\psi'}\leq\max\Bigl(1,\frac{2}{\nu}\Bigr)\quad\text{and}\quad \fnorm{\psi''}\leq \max\Bigl(2,\frac{1}{\nu}+1\Bigr)\,.
\end{equation*}
The rest of the argument follows a route that is completely analogous to the one leading to the proof of Proposition \ref{genbound}; the details are omitted for the sake of conciseness.\\
\end{proof}

\begin{lemma}\label{vargamma2}
Let $q\geq 1$ be an integer and and consider a random variable $F$ such that $F = I_q(f)\in C_q = {\rm Ker}(L+qI)$, {{} $\E[F^2]=2\nu$} and $\E[F^4]<\infty$. Then, $F, F^2\in {\rm dom}\, L$, and
\begin{align*}
 \Var\Bigl(2F-q^{-1}\Gamma\bigl(F,F\bigr)\Bigr)&=\sum_{\substack{1\leq p\leq 2q-1:\\p\not=q}}\Bigl(1-\frac{p}{2q}\Bigr)^2 \Var\bigl(\proj{F^2}{p}\bigr)\notag\\
 &\;+\frac14\Var\Bigl(\proj{F^2}{q}-4F\Bigr) \\
 &=\sum_{\substack{1\leq p\leq 2q-1:\\p\not=q}}\Bigl(1-\frac{p}{2q}\Bigr)^2 \Var\bigl(\proj{F^2}{p}\bigr)\notag\\
 &\;+\frac14\Var\bigl(\proj{F^2}{q}\bigr)+8\nu-2\E\bigl[F^3\bigr]=V_1+ V_2\notag\,,
\end{align*}
where we define 
\begin{align}
 V_1&:=\sum_{\substack{1\leq p\leq 2q-1:\\p\not=q}}\Bigl(1-\frac{p}{2q}\Bigr)^2 \Var\bigl(\proj{F^2}{p}\bigr)\quad\text{and}\label{defv1} \\
 V_2&:=\frac14\Var\bigl(\proj{F^2}{q}\bigr)+8\nu-2\E\bigl[F^3\bigr]=\frac14\Var\Bigl(\proj{F^2}{q}-4F\Bigr)\,.\label{defv2}
\end{align}

\end{lemma}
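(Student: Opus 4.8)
The plan is to follow the template of the proof of Lemma \ref{vargamma}, the only genuinely new feature being the presence of the additional linear term $2F$. First I would record that, exactly as in Lemma \ref{vargamma}, the hypothesis $\E[F^4]<\infty$ together with Lemma \ref{chaosorder} guarantees the finite chaotic decomposition \eqref{cdFsq} for $F^2$, so that $F^2\in\dom L$, while $F=I_q(f)\in C_q\subset\dom L$ trivially. I would then reuse identity \eqref{vg1}, which yields
\begin{equation*}
 q^{-1}\Gamma(F,F)=\sum_{p=0}^{2q}\Bigl(1-\frac{p}{2q}\Bigr)\proj{F^2}{p}\,,
\end{equation*}
so that the coefficient of $\proj{F^2}{q}$ is exactly $1/2$.

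The key algebraic step is to subtract this from $2F$ and merge the two contributions that live in $C_q$. Since both $F$ and $\proj{F^2}{q}$ belong to $C_q$, one can group $2F-\tfrac12\proj{F^2}{q}=-\tfrac12(\proj{F^2}{q}-4F)$, obtaining
\begin{equation*}
 2F-q^{-1}\Gamma(F,F)=-\frac12\bigl(\proj{F^2}{q}-4F\bigr)-\sum_{\substack{0\leq p\leq 2q:\\ p\neq q}}\Bigl(1-\frac{p}{2q}\Bigr)\proj{F^2}{p}\,.
\end{equation*}
Because the summands belong to mutually orthogonal Wiener chaoses $C_p$ (the term $p=0$ being a constant, which drops out, and the coefficient at $p=2q$ vanishing), computing the variance reduces by orthogonality to summing the individual variances. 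This produces precisely the first displayed equality in the statement, i.e. $V_1+\tfrac14\Var(\proj{F^2}{q}-4F)$, with $V_1$ as in \eqref{defv1}.

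To pass to the second form, I would expand $\tfrac14\Var(\proj{F^2}{q}-4F)$ by bilinearity. The only point requiring a short argument is the cross term: I would compute $\Cov(\proj{F^2}{q},F)=\E[\proj{F^2}{q}\,F]=\E[F^3]$, where the first equality uses $\E[F]=0$ and the second follows from the orthogonality of the chaotic decomposition of $F^2$ against $F\in C_q$ (only the $C_q$-component of $F^2$ survives the pairing). Together with $\Var(F)=\E[F^2]=2\nu$, this gives $\tfrac14\Var(\proj{F^2}{q}-4F)=\tfrac14\Var(\proj{F^2}{q})-2\E[F^3]+8\nu$, which is exactly $V_2$ in \eqref{defv2}. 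I do not expect any genuine obstacle: the whole argument is bookkeeping with orthogonal projections, and the single non-automatic identity is the covariance computation above, which is immediate once one observes that projecting $F^2$ onto $C_q$ before pairing with $F$ changes nothing.
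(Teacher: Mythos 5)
Your proposal is correct and follows essentially the same route as the paper: both rest on identity \eqref{vg1}, orthogonality of the Wiener chaoses (which lets you isolate and merge the two $C_q$-contributions $2F$ and $-\tfrac12\proj{F^2}{q}$), and the covariance computation $\Cov\bigl(\proj{F^2}{q},F\bigr)=\E[F^3]$. The paper's proof is just a terser version of the same bookkeeping.
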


\begin{proof}
 The first identity easily follows from \eqref{vg1} and the orthogonality of the chaos decomposition. The second one follows from this and the formula
 \begin{equation*}
  \Var(X+Y)=\Var(X)+\Var(Y)+2\Cov(X,Y)
 \end{equation*}
upon observing that 
\begin{equation*}
 \Cov\Bigl(\proj{F^2}{q}\,, -4F\Bigr)=-4\E\bigl[F^3\bigr]\,,
\end{equation*}
again by orthogonality.\\
\end{proof}

\begin{lemma}\label{sandwich}
 Let $q\geq 1$ be an integer and let $F\in L^4(\Prob)$ be an element of the $q$-th Wiener chaos $C_q$, such that $F$ verifies {\bf Assumption A} and $\E[F^2] =2\nu$. The following relations are in order:
 \begin{align*}
 &\frac{1}{6q}\Bigl(\E\bigl[F^4\bigr]-12\E\bigl[F^3\bigr]-12\nu^2+48\nu\Bigr)+\frac{1}{12q^2}\int_{\mathcal{Z}}\E\bigl[\abs{D_z^+F}^4\bigr]\mu(dz)\notag\\
 &\leq \Var\Bigl(2F-q^{-1}\Gamma\bigl(F,F\bigr)\Bigr)= \Var\Bigl(2F-q^{-1}\Gamma_0\bigl(F,F\bigr)\Bigr)\notag\\
 &\leq\frac13\Bigl(\E\bigl[F^4\bigr]-12\E\bigl[F^3\bigr]-12\nu^2+48\nu\Bigr)+\frac{1}{6q}\int_{\mathcal{Z}}\E\bigl[\abs{D_z^+F}^4\bigr]\mu(dz)
\end{align*}
 \end{lemma}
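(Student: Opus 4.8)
The plan is to reduce the entire statement to an elementary, term-by-term comparison between the coefficients appearing in the chaotic decomposition of $F^2$. First, the middle equality is essentially free: under {\bf Assumption A} the pair $(F,F)$ satisfies the hypotheses of Proposition \ref{cdcform}, so that $\Gamma(F,F)=\Gamma_0(F,F)$ almost surely, and hence $\Var\bigl(2F-q^{-1}\Gamma(F,F)\bigr)=\Var\bigl(2F-q^{-1}\Gamma_0(F,F)\bigr)$. It therefore suffices to sandwich the single quantity $V:=\Var\bigl(2F-q^{-1}\Gamma(F,F)\bigr)$. Writing $S_p:=\Var\bigl(\proj{F^2}{p}\bigr)$ and letting $D_q\geq0$ be the nonnegative contraction remainder from \eqref{vg3}, Lemma \ref{vargamma2} gives $V=V_1+V_2$, where $V_1=\sum_{\substack{1\leq p\leq2q-1:\\p\neq q}}\frac{(2q-p)^2}{4q^2}\,S_p$ and $V_2=\tfrac14\Var\bigl(\proj{F^2}{q}-4F\bigr)\geq0$.

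Next I would express the two functionals occurring in the statement entirely in terms of the $S_p$, of $V_2$ and of $D_q$. For the cumulant-type quantity $Q:=\E[F^4]-12\E[F^3]-12\nu^2+48\nu$, the isometry relation together with \eqref{vg3} and $\E[F^2]=2\nu$ yields $\E[F^4]=\sum_{p=1}^{2q-1}S_p+12\nu^2+D_q$, while the definition \eqref{defv2} of $V_2$ gives $-12\E[F^3]+48\nu=6V_2-\tfrac32 S_q$; combining these, $Q=\bigl(\sum_{p\neq q}S_p\bigr)-\tfrac12 S_q+D_q+6V_2$. For the add-one-cost integral $I:=\int_\calZ\E[|D_z^+F|^4]\,\mu(dz)$, I would start from the exact identity $\tfrac{1}{2q}I=\tfrac3q\E[F^2\Gamma(F,F)]-\E[F^4]$ supplied by Lemma \ref{remlemma}, insert the expansion $\E[F^2\Gamma(F,F)]=q(\E[F^2])^2+\tfrac12\sum_{p=1}^{2q-1}(2q-p)S_p$ that follows from \eqref{vg1} and orthogonality, and simplify to obtain $I=\sum_{p=1}^{2q-1}(4q-3p)S_p-2qD_q$.

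The decisive observation is then a pair of exact cancellations. Forming the lower combination $\tfrac{1}{6q}Q+\tfrac{1}{12q^2}I$ and the upper combination $\tfrac13 Q+\tfrac{1}{6q}I$, one checks that in each case the coefficients of $S_q$ and of $D_q$ vanish identically: the $p=q$ term of $I$ carries coefficient $4q-3q=q$, which exactly compensates the $S_q$-contribution coming from the $-\tfrac12 S_q$ inside $Q$, while the two $D_q$-contributions are equal and opposite. What survives is
\[
\tfrac{1}{6q}Q+\tfrac{1}{12q^2}I=\sum_{\substack{1\leq p\leq 2q-1:\\p\neq q}}\frac{2q-p}{4q^2}\,S_p+\frac1q V_2,\qquad \tfrac13 Q+\tfrac{1}{6q}I=\sum_{\substack{1\leq p\leq 2q-1:\\p\neq q}}\frac{2q-p}{2q}\,S_p+2V_2.
\]
Comparing with $V=\sum_{p\neq q}\frac{(2q-p)^2}{4q^2}S_p+V_2$, both desired inequalities become term-by-term and reduce to the elementary bounds $1\leq 2q-p\leq 2q$ (valid since $1\leq p\leq 2q-1$), which give $\frac{2q-p}{4q^2}\leq\frac{(2q-p)^2}{4q^2}\leq\frac{2q-p}{2q}$, together with $\frac1q\leq1\leq2$ applied to the nonnegative number $V_2$.

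The genuinely delicate point of this argument is not the final scalar comparison but the bookkeeping that produces the two displayed identities: one must track the coefficients $4q-3p$ and $(2q-p)^2/(4q^2)$ carefully enough to see that the \emph{unknown} contraction remainder $D_q$ and the diagonal variance $S_q$ drop out of both linear combinations. This cancellation is precisely what allows one to bypass any explicit control of $D_q$; once it is verified, the sandwich follows immediately.
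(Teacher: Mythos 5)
Your proposal is correct and follows essentially the same route as the paper: both rest on the decomposition $V=V_1+V_2$ from Lemma \ref{vargamma2}, the exact identity $\frac{1}{2q}\int_\calZ\E[|D_z^+F|^4]\mu(dz)=\frac{3}{q}\E[F^2\Gamma(F,F)]-\E[F^4]$ from Lemma \ref{remlemma} combined with the orthogonal expansion \eqref{vg1}, and the elementary comparison $1\leq 2q-p\leq 2q$ applied term by term (equivalently, $(1-\frac{p}{2q})^2\leq 1-\frac{p}{2q}\leq 2q(1-\frac{p}{2q})^2$). The only cosmetic difference is that you route the computation through the contraction remainder $D_q$ of \eqref{vg3} and then observe its cancellation, whereas the paper never introduces $D_q$ here because the Lemma \ref{remlemma} identity is already exact.
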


 \begin{proof}
Recall that, under the assumptions in the statement, $\Gamma(F,F) = \Gamma_0(F,F)$. Using orthogonality, from Lemma \ref{remlemma} and \eqref{vg1} we obtain
\begin{align*}
 \E\bigl[F^4\bigr]&=\frac{3}{q}\E\bigl[F^2\Gamma(F,F)\bigr]-\frac{1}{2q}\int_{\mathcal{Z}}\E\bigl[\abs{D_z^+F}^4\bigr]\mu(dz)\notag\\
 &=3\bigl(\E[F^2]\bigr)^2+3\sum_{p=1}^{2q}\Bigl(1-\frac{p}{2q}\Bigr) \Var\bigl(\proj{F^2}{p}\bigr)-\frac{1}{2q}\int_{\mathcal{Z}}\E\bigl[\abs{D_z^+F}^4\bigr]\mu(dz)\notag\\
 &=12\nu^2+3\sum_{\substack{1\leq p\leq 2q-1:\\p\not=q}}\Bigl(1-\frac{p}{2q}\Bigr) \Var\bigl(\proj{F^2}{p}\bigr)+\frac{3}{2}\Var\bigl(\proj{F^2}{q}\bigr)\notag\\
 &\;-\frac{1}{2q}\int_{\mathcal{Z}}\E\bigl[\abs{D_z^+F}^4\bigr]\mu(dz)\,.
\end{align*}
Hence, recalling the definition of $V_2$ in \eqref{defv2} we conclude from Lemma \ref{vargamma2} that
\begin{align*}
 &\E\bigl[F^4\bigr]-12\E\bigl[F^3\bigr]-12\nu^2+48\nu\notag\\
 &=3\sum_{\substack{1\leq p\leq 2q-1:\\p\not=q}}\Bigl(1-\frac{p}{2q}\Bigr) \Var\bigl(\proj{F^2}{p}\bigr)-\frac{1}{2q}\int_{\mathcal{Z}}\E\bigl[\abs{D_z^+F}^4\bigr]\mu(dz)\notag\\
&\; +\frac{3}{2}\Var\bigl(\proj{F^2}{q}\bigr)-12\E\bigl[F^3\bigr]+48\nu\notag\\
&=3\sum_{\substack{1\leq p\leq 2q-1:\\p\not=q}}\Bigl(1-\frac{p}{2q}\Bigr) \Var\bigl(\proj{F^2}{p}\bigr)+6 V_2-\frac{1}{2q}\int_{\mathcal{Z}}\E\bigl[\abs{D_z^+F}^4\bigr]\mu(dz)\,.
\end{align*}
Now, recalling also the definition \eqref{defv1} of $V_1$ and using the simple chain of inequalities
\begin{equation*}
 \Bigl(1-\frac{p}{2q}\Bigr)^2\leq \Bigl(1-\frac{p}{2q}\Bigr)\leq 2q\Bigl(1-\frac{p}{2q}\Bigr)^2\,,\quad 1\leq p\leq 2q-1\,,
\end{equation*}
we obtain on the one hand that 
\begin{align}\label{sw1}
 &\E\bigl[F^4\bigr]-12\E\bigl[F^3\bigr]-12\nu^2+48\nu\notag\\
 &\geq3\sum_{\substack{1\leq p\leq 2q-1:\\p\not=q}}\Bigl(1-\frac{p}{2q}\Bigr)^2 \Var\bigl(\proj{F^2}{p}\bigr)+6 V_2-\frac{1}{2q}\int_{\mathcal{Z}}\E\bigl[\abs{D_z^+F}^4\bigr]\mu(dz)\notag\\
&=3V_1+6V_2-\frac{1}{2q}\int_{\mathcal{Z}}\E\bigl[\abs{D_z^+F}^4\bigr]\mu(dz)\notag\\
&\geq 3  \Var\Bigl(2F-q^{-1}\Gamma\bigl(F,F\bigr)\Bigr)-\frac{1}{2q}\int_{\mathcal{Z}}\E\bigl[\abs{D_z^+F}^4\bigr]\mu(dz)\,,
\end{align}
and, on the other hand, 
\begin{align}\label{sw2}
 &\E\bigl[F^4\bigr]-12\E\bigl[F^3\bigr]-12\nu^2+48\nu\notag\\
 &\leq 6q\sum_{\substack{1\leq p\leq 2q-1:\\p\not=q}}\Bigl(1-\frac{p}{2q}\Bigr)^2 \Var\bigl(\proj{F^2}{p}\bigr)+6 V_2-\frac{1}{2q}\int_{\mathcal{Z}}\E\bigl[\abs{D_z^+F}^4\bigr]\mu(dz)\notag\\
 &\leq 6q  \Var\Bigl(2F-q^{-1}\Gamma\bigl(F,F\bigr)\Bigr)-\frac{1}{2q}\int_{\mathcal{Z}}\E\bigl[\abs{D_z^+F}^4\bigr]\mu(dz)\,.
 \end{align}
The statement of the Lemma now follows from \eqref{sw1} and \eqref{sw2}.\\
\end{proof}

\begin{proof}[End of the proof of Theorem \ref{4mtg}] 
{ The claim of Theorem \ref{4mtg} is now an immediate consequence of the bound \eqref{sbg1} and of the upper bound given in Lemma \ref{sandwich}. }
\end{proof}

\section{Proofs of technical lemmas}\label{proofs}
 \subsection{ {}{Proof of Lemma \ref{Difflemma}}}
 
We first prove part (a). We just prove \eqref{dp2} and \eqref{dp3}, since the derivation of \eqref{dm2} and \eqref{dm3} is very similar. Let $f$ be a representative for $F$, i.e. $F=f(\eta)$. Then, by the binomial identity, we have 
 \begin{align*}
  \bigl(D_z^+F\bigr)^2&=\bigl(f(\eta+\delta_z)-f(\eta)\bigr)^2=f(\eta+\delta_z)^2-f(\eta)^2-2f(\eta+\delta_z) f(\eta)+2f(\eta)^2\notag\\
  &=D_z^+F^2-2f(\eta)\bigl(f(\eta+\delta_z)- f(\eta)\bigr)=D_z^+F^2-2 F D_z^+F
 \end{align*}
such that \eqref{dp2} holds true. Similarly, using \eqref{dp2}, we obtain
\begin{align*}
 \bigl(D_z^+F\bigr)^3&=\bigl(f(\eta+\delta_z)-f(\eta)\bigr)^3=f(\eta+\delta_z)^3-f(\eta)^3-3f(\eta+\delta_z)^2f(\eta)\notag\\
 &\;+3f(\eta+\delta_z)f(\eta)^2\notag\\
 &=D_z^+F^3+3f(\eta)^2\bigl(f(\eta+\delta_z)- f(\eta)\bigr)-3f(\eta)\bigl(f(\eta+\delta_z)^2- f(\eta)^2\bigr)\notag\\
&=D_z^+F^3+3F^2 D_z^+F-3FD_z^+F^2\notag\\
&=D_z^+F^3+3F^2 D_z^+F-3F\bigl(D_z^+F\bigr)^2-6F^2 D_z^+F\notag\\
&=D_z^+F^3-3F^2 D_z^+F-3F\bigl(D_z^+F\bigr)^2
 \end{align*}
which is equivalent to \eqref{dp3}. Now we turn to the proof of (b). Again, we just prove the part involving $D_z^+$. By a suitable version of Taylor's formula, for $x,y \in\R$ we have 
\begin{align*}
 \psi(y)=\psi(x)+\psi'(x)(y-x)+R_\psi(x,y)(y-x)^2\,,
\end{align*}
where 
\[\babs{R_\psi(x,y)}\leq\frac{\fnorm{\psi''}}{2}\,.\]
 Now the result follows by letting $x=F=f(\eta)$, $y=f(\eta+\delta_z)$ and $R_\psi^+(F,z)=R_\psi(f(\eta),f(\eta+\delta_z))$.

\qed

\subsection{{} Proof of Lemma \ref{chaosorder}}
{{} The method of proof we apply is similar to the one used for the proof of Proposition 5 in \cite{Lastsv}, which gives the product formula for multiple Wiener-It\^{o} integrals.}
Let 
\begin{equation}\label{chaosFG}
 FG=\E[FG]+\sum_{m=1}^\infty I_m(h_m)
\end{equation}
denote the chaos decomposition of $FG$. 
{ Let us, for the moment, only assume that $F=I_p(f), G=I_q(g)\in L^2(\Prob)$. We will prove the following 
statements $(\tilde{a})$ and $(\tilde{b})$ simultaneously by induction on $k:=p+q\geq2$:
\begin{align*}
{\text (\tilde{a})}&\quad \frac{1}{m!}\E\bigl[D^{(m)}_{z_1,\dotsc,z_m}(FG)\bigr]=0\quad\text{for all $m>k$ and $z_1,\dotsc,z_m\in\calZ$ }\\
{\text (\tilde{b})}&\quad \frac{1}{k!}\E\bigl[D^{(k)}_{z_1,\dotsc,z_k}(FG)\bigr]
=f\tilde{\otimes} g(z_1,\dotsc,z_k)\quad\text{for all $z_1,\dotsc,z_k\in\calZ$ }
\end{align*}
Once this is shown, if $F,G\in L^4(\Prob)$ such that $FG$ has a chaotic decomposition \eqref{chaosFG}, (a) and (b) immediately follow from $(\tilde{a})$ and $(\tilde{b})$, respectively, by virtue of \eqref{kerform}.
If $k=2$, then necessarily $p=q=1$ and, by \eqref{e:mix+} and since $F,G\in\dom D$, for all $y,z\in\calZ$ we have 
 \begin{align*}
  D^+_{z}(FG)&=f(z) I_1(g)+g(z)I_1(f)+f(z)g(z)\quad\text{and}\\
  D^{(2)}_{y,z}(FG)&=f(z)g(y)+f(y)g(z)=2f\tilde{\otimes} g(y,z)\,.
 \end{align*}
This immediately implies that $D^{(m)}(FG)=0$ for all $m>2$. We can thus infer that 
\begin{align*}
 \frac12\E\bigl[D^{(2)}_{z_1,z_2}(FG)\bigr]=f\tilde{\otimes} g(z_1,z_2)\quad\text{and}\quad\frac{1}{m!}\E\bigl[D^{(m)}_{z_1,\dotsc,z_m}(FG)\bigr]=0
\end{align*}
for all $m>2$ and $z_1,\dotsc,z_m\in\calZ$. Thus, $(\tilde{a})$ and $(\tilde{b})$ hold true for $k=2$.}
Now assume that $k>2$. { Again, since $F,G\in\dom D$, from \eqref{e:mix+} we obtain that }
\begin{align}\label{co1}
 D^+_{z_k}(FG)&=pI_q(g)I_{p-1}\bigl(f(z_k,\cdot)\bigr) +qI_p(f)I_{q-1}\bigl(g(z_k,\cdot)\bigr)\notag\\
 &\;+ pq I_{p-1}\bigl(f(z_k,\cdot)\bigr)I_{q-1}\bigl(g(z_k,\cdot)\bigr)\notag\\
 & =:p \tilde{F}_{z_k} G+q\tilde{G}_{z_k} F+pq \tilde{F}_{z_k} \tilde{G}_{z_k} 
\end{align}
holds for all $z_k\in\calZ$, where $\tilde{F}_{z_k}$ and $\tilde{G}_{z_k}$ are multiple integrals of orders $p-1$ and $q-1$, respectively. Hence, by the induction hypothesis for claim $(\tilde{a})$ we already conclude that
\begin{equation*}
 \E\bigl[D^{(k-1)}_{z_1,\dotsc,z_{k-1}}\bigl(\tilde{F}_{z_k}\tilde{G}_{z_k}\bigr)\bigr]=0\,.
\end{equation*}
so that 
\begin{equation*}
 \E\bigl[D^{(k)}_{z_1,\dotsc,z_k}(FG)\bigr]=p\E\Bigl[D^{(k-1)}_{z_1,\dotsc,z_{k-1}}\bigl(\tilde{F}_{z_k}G\bigr)\Bigr]
 +q\E\Bigl[D^{(k-1)}_{z_1,\dotsc,z_{k-1}}\bigl(F\tilde{G}_{z_k}\bigr)\Bigr]\,.
\end{equation*}
By the induction hypothesis for claim $(\tilde{b})$ we have 
\begin{align*}
 \E\Bigl[D^{(k-1)}_{z_1,\dotsc,z_{k-1}}\bigl(\tilde{F}_{z_k}G\bigr)\Bigr]&=(k-1)!\bigl(f(z_k,\cdot)\tilde{\otimes} g\bigr)(z_1,\dotsc,z_{k-1})\quad\text{and}\\
 \E\Bigl[D^{(k-1)}_{z_1,\dotsc,z_{k-1}}\bigl(F\tilde{G}_{z_k}\bigr)\Bigr]&=(k-1)!\bigl(f\tilde{\otimes}\bigl(g(z_k,\cdot)\bigr)(z_1,\dotsc,z_{k-1})
\end{align*}
and, in order to prove $(\tilde{b})$, it remains to show that 
\begin{align}\label{c02}
k! (f\tilde{\otimes} g)(z_1,\dotsc,z_{k})&= p(k-1)!\bigl(f(z_k,\cdot)\tilde{\otimes} g\bigr)(z_1,\dotsc,z_{k-1})\notag\\
&\;+q(k-1)!\bigl(f\tilde{\otimes}\bigl(g(z_k,\cdot)\bigr)(z_1,\dotsc,z_{k-1})\,.
\end{align}
This, however, follows from 
\begin{align*}
 k!(f\tilde{\otimes} g)(z_1,\dotsc,z_{k})&=\sum_{\pi\in\mathbb{S}_{p+q}} f(z_{\pi(1)},\dotsc,z_{\pi(p)})g(z_{\pi(p+1)},\dotsc,z_{\pi(p+q)})\\
 &=\sum_{\pi:k\in\{\pi(1),\dotsc,\pi(p)\}} f(z_{\pi(1)},\dotsc,z_{\pi(p)})g(z_{\pi(p+1)},\dotsc,z_{\pi(p+q)})\\
 &\;+\sum_{\pi:k\notin\{\pi(1),\dotsc,\pi(p)\}} f(z_{\pi(1)},\dotsc,z_{\pi(p)})g(z_{\pi(p+1)},\dotsc,z_{\pi(p+q)})\\
 &\stackrel{!}{=}p\sum_{\tau\in\mathbb{S}_{p+q-1}}f(z_k, z_{\tau(1)},\dotsc,z_{\tau(p-1)})g(z_{\tau(p)},\dotsc,z_{\tau(p+q-1)})\\
 &\;+q\sum_{\tau\in\mathbb{S}_{p+q-1}}f(z_{\tau(1)},\dotsc,z_{\tau(p)})g(z_k,z_{\tau(p+1)},\dotsc,z_{\tau(p+q-1)})\\
 &=p(k-1)!\bigl(f(z_k,\cdot)\tilde{\otimes} g\bigr)(z_1,\dotsc,z_{k-1})\notag\\
&\;+q(k-1)!\bigl(f\tilde{\otimes}\bigl(g(z_k,\cdot)\bigr)(z_1,\dotsc,z_{k-1})\,.
\end{align*}
We explain the identity involving $!$ in some more detail. Consider the first sum appearing there and note that 
\begin{align*}
&\sum_{\pi:k\in\{\pi(1),\dotsc,\pi(p)\}} f(z_{\pi(1)},\dotsc,z_{\pi(p)})g(z_{\pi(p+1)},\dotsc,z_{\pi(p+q)})\\
&=\sum_{j=1}^p \sum_{\pi:\pi(j)=k} f(z_{\pi(1)},\dotsc,z_{\pi(j-1)},z_k,z_{\pi(j+1)},\dotsc,z_{\pi(p)})g(z_{\pi(p+1)},\dotsc,z_{\pi(p+q)})\\
&=p\sum_{\pi:\pi(1)=k} f(z_k,z_{\pi(2)},\dotsc,z_{\pi(p)})g(z_{\pi(p+1)},\dotsc,z_{\pi(p+q)})
\end{align*}
where we have used the symmetry of the kernel $f$ to obtain the last identity.
Now, since the mapping 
\begin{align*}
\Psi:\mathbb{S}_{k-1}\rightarrow\{\pi\in\mathbb{S}_k\,:\,\pi(1)=k\}\,,
\quad \Psi(\sigma)(j):=\begin{cases}
k\,,& j=1\\
\sigma(j-1)\,, &j\in\{2,\dotsc,k\}
\end{cases}
\end{align*}
is a bijection, we obtain that
\begin{align*}
&\sum_{\pi:\pi(1)=k} f(z_k,z_{\pi(2)},\dotsc,z_{\pi(p)})g(z_{\pi(p+1)},\dotsc,z_{\pi(p+q)})\\
&=\sum_{\tau\in\mathbb{S}_{p+q-1}}f(z_k, z_{\tau(1)},\dotsc,z_{\tau(p-1)})g(z_{\tau(p)},\dotsc,z_{\tau(p+q-1)})
\end{align*}
proving the claim. Thus, we have proved $(\tilde{b})$.\\
If $m>k$ and $z_1,\dotsc,z_m\in\calZ$, then, by the induction hypothesis on $(\tilde{a})$ and from \eqref{co1} we obtain
{
\begin{align*}
 \E\bigl[D^{(m)}_{z_1,\dotsc,z_m}(FG)\bigr] &=p \E\Bigl[D^{(m-1)}_{z_1,\dotsc,z_{m-1}}\bigl(\tilde{F}_{z_m} G\bigr)\Bigr]
 +q \E\Bigl[D^{(m-1)}_{z_1,\dotsc,z_{m-1}}\bigl(F\tilde{G}_{z_m} \bigr)\Bigr]\\
&\hspace{2cm} +pq\E\Bigl[D^{(m-1)}_{z_1,\dotsc,z_{m-1}}\bigl(\tilde{F}_{z_m}\tilde{G}_{z_m} \bigr)\Bigr]=0
 \end{align*}
}
for all $z_1,\dotsc,z_m\in\calZ$, proving $(\tilde{a})$.
\qed

{{} \subsection{Stein's equation in the Kolmogorov distance}\label{ss:stein}

In order to deal with bounds in the Kolmogorov distance involving remove-one cost operators, we need the following result, containing several estimates on the solution of the Stein's equation associated with test functions having the form of indicators of half-lines. Points (a)-(f) are well-known. Point (g) is standard but not explicitly stated in the literature (to our knowledge) --- a proof is provided for the sake of completeness.

\begin{prop}\label{p:stein} Let $N\sim {N}(0,1)$ be a centred Gaussian random variable with unit variance and, for every $x \in\mathbb{R}$, introduce the Stein's equation
\begin{equation}
\label{eq:stein-equation}
g'(w)-wg(w)={\bf 1}_{\{w\leqslant x\}}-\Prob(N\leqslant x),
\end{equation}
where $w\in \mathbb{R}$. Then, for every real $x$, there exists a function $g_x : \mathbb{R}\to \mathbb{R} : w\mapsto g_x(w)$ satisfying the following properties {\rm (a)-(g)}:
\begin{itemize}

\item[\rm (a)] $g_x$ is continuous at every point $w\in \mathbb{R}$, and infinitely differentiable at every $w\neq x$;

\item[\rm (b)] $g_x$ satisfies the relation \eqref{eq:stein-equation}, for every $w\neq x$;

\item[\rm (c)] $0<g_{x}\leqslant  \frac{\sqrt{2\pi }}{4}$;

\item[\rm (d)] for every $u,v,w\in \mathbb{R}$,
\begin{equation}\label{e:zumba}
\vert (w+u)g_x(w+u) - (w+v)g_x(w+v)\vert \leqslant \left( \vert w \vert + \frac{\sqrt{2\pi}}{4}\right) \left(\vert u \vert + \vert v \vert \right);
\end{equation}

\item[\rm (e)] adopting the convention 
\begin{equation}\label{e:convention}
g'_x(x) : = xg_x(x)+1-\Prob(N\leqslant x), \quad 
\end{equation}
one has that $|g'_x(w)|\leqslant 1$, for every real $w$ ;

\item[\rm (f)] using again the convention \eqref{e:convention}, for all $w,h\in \mathbb{R}$ one has that
\begin{align}
\label{eq:stein}
 | g_{x}(w+h)-g_{x}(w)-g_{x}'(w)h | &\leqslant \frac{ |  h|^{2} }{2}\left(  | w| +\frac{\sqrt{2\pi }}{4} \right)\\
 &\quad\quad\quad + | h | ({\bf 1}_{[w, w+h)}(x) + {\bf 1}_{[w+h, w)}(x))\notag \\
 &= \frac{ |  h|^{2} }{2}\left(  | w| +\frac{\sqrt{2\pi }}{4} \right)\label{eq:stein2} \\
 &\quad\quad\quad+ h\left({\bf 1}_{ \left[ w , w + h \right)}(x) - {\bf 1}_{\left[  w+h , w\right)}(x) \right); \notag
\end{align}

\item[\rm (g)] under \eqref{e:convention}, for every $w,h\in \mathbb{R}$ one has that
\begin{align}
\label{eq:steinm}
 | g_x(w)-g_x(w-h)-g_x'(w)h | &\leqslant \frac{ 3 |  h|^{2} }{2}\left(  | w-h | +\frac{\sqrt{2\pi }}{4} \right)\\
 &\quad\quad\quad + | h | ({\bf 1}_{[w-h, w)}(x) + {\bf 1}_{[w, w-h)}(x))\notag \\
 &= \frac{ 3|  h|^{2} }{2}\left(  | w-h| +\frac{\sqrt{2\pi }}{4} \right)\label{eq:steinm2} \\
 &\quad\quad\quad+ h\left({\bf 1}_{ \left[ w-h , w \right)}(x) - {\bf 1}_{\left[  w , w-h\right)}(x) \right).\notag
\end{align}

\end{itemize}
\end{prop}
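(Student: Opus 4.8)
The plan is to take for granted the explicit solution $g_x$ of the Stein equation \eqref{eq:stein-equation} together with its classical properties (a)--(f), whose verification is standard, and to concentrate on the genuinely new point (g), which I would derive from the forward estimate (f), the Lipschitz-type bound (d), and the equation itself. The starting point is the algebraic decomposition
\[
g_x(w)-g_x(w-h)-g_x'(w)h=\bigl[g_x(w)-g_x(w-h)-g_x'(w-h)h\bigr]+h\bigl[g_x'(w-h)-g_x'(w)\bigr],
\]
which isolates a \emph{forward} remainder based at the point $w-h$ (to be controlled by (f)) and a correction measuring the variation of $g_x'$ between $w-h$ and $w$.

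For the first bracket I would apply (f) with base point $w-h$ and increment $h$ (so that $(w-h)+h=w$), obtaining a polynomial contribution bounded by $\tfrac12|h|^2\bigl(|w-h|+\tfrac{\sqrt{2\pi}}{4}\bigr)$ together with an indicator contribution. For the second bracket I would use that the convention \eqref{e:convention} makes the identity $g_x'(v)=v\,g_x(v)+\mathbf{1}_{\{v\le x\}}-\Prob(N\le x)$ valid for \emph{every} real $v$; hence
\[
g_x'(w-h)-g_x'(w)=\bigl[(w-h)g_x(w-h)-w\,g_x(w)\bigr]+\bigl[\mathbf{1}_{\{w-h\le x\}}-\mathbf{1}_{\{w\le x\}}\bigr],
\]
and I would estimate the first square bracket by (d) (with base $w-h$), producing, after multiplication by $h$, a further polynomial term bounded by $|h|^2\bigl(|w-h|+\tfrac{\sqrt{2\pi}}{4}\bigr)$. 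Adding the two polynomial contributions gives exactly the coefficient $\tfrac32$ appearing in \eqref{eq:steinm}.

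The delicate point --- and the main obstacle --- is the treatment of the two indicator contributions: a naive triangle inequality would double them, producing a factor $2$ in front of $\mathbf{1}_{[w-h,w)}(x)+\mathbf{1}_{[w,w-h)}(x)$ instead of the single copy claimed in (g). The resolution is that these two indicator terms carry opposite signs and cancel in part. The cleanest way I would make this rigorous is to note that, by (a) and (e), $g_x$ is Lipschitz (continuous, with bounded derivative having a single jump at $x$), so that the whole backward remainder equals $\int_{w-h}^w\bigl(g_x'(t)-g_x'(w)\bigr)\,dt$; substituting the identity for $g_x'$ and integrating the indicator difference $\mathbf{1}_{\{t\le x\}}-\mathbf{1}_{\{w\le x\}}$ directly shows that it is supported on the interval bounded by $w-h$ and $w$ and is there bounded in absolute value by $|h|$, which yields precisely the single factor $|h|\bigl(\mathbf{1}_{[w-h,w)}(x)+\mathbf{1}_{[w,w-h)}(x)\bigr)$. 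Finally, the signed reformulation \eqref{eq:steinm2} follows from \eqref{eq:steinm} exactly as in part (f), since for a given sign of $h$ only one of the two indicators can be nonzero, so that $|h|\bigl(\mathbf{1}_{[w-h,w)}+\mathbf{1}_{[w,w-h)}\bigr)=h\bigl(\mathbf{1}_{[w-h,w)}-\mathbf{1}_{[w,w-h)}\bigr)$.
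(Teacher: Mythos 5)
Your final argument coincides with the paper's own proof of point (g): both write the backward remainder as $\int_{w-h}^{w}\bigl(g_x'(t)-g_x'(w)\bigr)\,dt$ (equivalently $\int_{0}^{h}\bigl(g_x'(w-h+u)-g_x'(w)\bigr)\,du$), substitute the everywhere-valid identity $g_x'(v)=v\,g_x(v)+\mathbf{1}_{\{v\le x\}}-\Prob(N\le x)$ granted by the convention \eqref{e:convention}, control the polynomial part via (d) with base point $w-h$ to produce the coefficient $\tfrac{3}{2}|h|^2\bigl(|w-h|+\tfrac{\sqrt{2\pi}}{4}\bigr)$, and integrate the indicator difference directly to obtain the single factor $|h|\bigl(\mathbf{1}_{[w-h,w)}(x)+\mathbf{1}_{[w,w-h)}(x)\bigr)$. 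Your preliminary two-bracket decomposition is only a detour, and you correctly diagnose that a triangle inequality there would double the indicator term --- which is exactly why the paper (and your final version) work with the integral representation instead.
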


\begin{proof} The content of Points (a)--(f) is well-known -- see e.g. \cite[Section 2.2.2]{BPsv} and the references therein. To show (g), fix $x\in \mathbb{R}$, recall \eqref{e:convention} and write, for every $w,h \in \mathbb{R}$, \begin{equation*}
g_x(w)-g_x(w-h) - hg'_x(w) = \int_{0}^{h}\left(g_x'(w-h+u) - g_x'(w) \right) du.
\end{equation*}  
Since $g_x$ is a solution of \eqref{eq:stein-equation} for every real $w$, we have that, for all $w,h \in \mathbb{R}$,
\begin{eqnarray*}
&& g_x(w)-g_x(w-h) - hg'_x(w) \\
&& = \int_{0}^{h}\bigl((w-h+u)g_x(w-h+u) - wg_x(w) \bigr) du + \int_{0}^{h}\left({\bf 1}_{\left\lbrace w-h + u \leqslant x\right\rbrace } - {\bf 1}_{\left\lbrace w \leqslant x\right\rbrace } \right) du \\ &&:= J_1 + J_2.
\end{eqnarray*} 
It follows that, by the triangle inequality,
\begin{equation}
\label{boundingthediff}
\left| g_x(w)-g_x(w-h) - hg'_x(x)\right| \leqslant \vert J_1 \vert + \vert J_2 \vert.
\end{equation} 
Using \eqref{e:zumba}, we have 
\begin{equation}
\label{boundI1}
\vert J_1 \vert \leqslant \int_{0}^{h}\left(|w-h| +\frac{\sqrt{2\pi}}{4}\right)(\vert u \vert + \vert h\vert) du = \frac{3h^2}{2}\left(|w-h| +\frac{\sqrt{2\pi}}{4}\right).
\end{equation} 
On the other hand, we have that
\begin{eqnarray*}
\vert J_2 \vert &=& {\bf 1}_{\left\lbrace h <0\right\rbrace }\left| \int_{0}^{h}\left({\bf 1}_{\left\lbrace w-h + u \leqslant x\right\rbrace } - {\bf 1}_{\left\lbrace w \leqslant x\right\rbrace } \right) du \right| \\ &&\quad\quad\quad\quad \quad\quad+ {\bf 1}_{\left\lbrace h \geq 0\right\rbrace }\left| \int_{0}^{h}\left({\bf 1}_{\left\lbrace w-h + u \leqslant x\right\rbrace } - {\bf 1}_{\left\lbrace w \leqslant x\right\rbrace } \right) du \right| \\
&=& {\bf 1}_{\left\lbrace h <0\right\rbrace }\int_{h}^{0}{\bf 1}_{\left\lbrace w \leqslant x < w-h+u\right\rbrace }du  + {\bf 1}_{\left\lbrace h \geq 0\right\rbrace }\int_{0}^{h}{\bf 1}_{\left\lbrace w-h+u \leqslant  x< w \right\rbrace } du.
\end{eqnarray*}
As a consequence,
\begin{eqnarray}
\vert J_2 \vert & \leqslant &\!\! {\bf 1}_{\left\lbrace h <0\right\rbrace }(-h){\bf 1}_{\left[  w , w-h\right)}(x)  + {\bf 1}_{\left\lbrace h \geq 0\right\rbrace }h{\bf 1}_{ \left[ w-h , w  \right)}(x)\label{boundI2} \\
& = &\!\! h\left({\bf 1}_{ \left[w-h , w  \right)}(x) - {\bf 1}_{\left[  w , w-h\right)}(x) \right)\notag \!\!=\!\! \vert h \vert \left({\bf 1}_{ \left[ w-h , w  \right)}(x) + {\bf 1}_{\left[  w , w-h\right)}(x) \right).
\end{eqnarray}
Using \eqref{boundI1} and \eqref{boundI2} in \eqref{boundingthediff} yields the conclusion.
\end{proof}

}

\normalem
\bibliography{poisson}{}
\bibliographystyle{alpha}
\end{document}